\documentclass[11pt]{article}
\usepackage{amsfonts}
\usepackage{graphics}
\usepackage{indentfirst}
\usepackage{color}
\usepackage{cite}
\usepackage{latexsym}
\usepackage[paper=a4paper, left=1.6cm, right=1.6cm, top=1.8cm, bottom=1.6cm, headheight=5.5pt, footskip=0.8cm, footnotesep=0.8cm, centering, includefoot]{geometry}
\usepackage{amsmath}
\allowdisplaybreaks
\usepackage{amssymb}
\usepackage{esint}
\usepackage[colorlinks, linkcolor=red]{hyperref}
\hypersetup{urlcolor=red, citecolor=red}
\usepackage[dvips]{epsfig}
\usepackage{amscd}

\usepackage{amsthm}
\usepackage{mathrsfs}
\usepackage{verbatim}

\DeclareMathOperator{\divv}{div}
\DeclareMathOperator{\curl}{curl}

\DeclareMathOperator{\divf}
{di\overset{\raisebox{0.1ex}{\hspace{0.1em}$\mathbf{\cdot}$}}{v}}
\allowdisplaybreaks

\begin{document}
\title{Global weak solutions and incompressible limit to the isentropic compressible magnetohydrodynamic equations in 2D bounded domains with ripped density and large initial data
\thanks{
Wu's research was partially supported by Fujian Alliance of Mathematics (No. 2023SXLMMS08) and the Scientific Research Funds of Xiamen University of Technology (No. YKJ25009R).
Zhong's research was partially supported by Fundamental Research Funds for the Central Universities (No. SWU--KU24001) and National Natural Science Foundation of China (No. 12371227). }
}

\author{Shuai Wang$\,^{\rm 1}\,$,\ Guochun Wu$\,^{\rm 2}\,$,\
Xin Zhong$\,^{\rm 1}\,$ {\thanks{E-mail addresses: swang238@163.com (S. Wang),
guochunwu@126.com (G. Wu), xzhong1014@amss.ac.cn (X. Zhong).}}\date{}\\
\footnotesize $^{\rm 1}\,$
School of Mathematics and Statistics, Southwest University, Chongqing 400715, P. R. China\\
\footnotesize $^{\rm 2}\,$ School of Mathematics and Statistics, Xiamen University of Technology, Xiamen 361024, P. R. China}

\maketitle
\newtheorem{theorem}{Theorem}[section]
\newtheorem{definition}{Definition}[section]
\newtheorem{lemma}{Lemma}[section]
\newtheorem{proposition}{Proposition}[section]
\newtheorem{corollary}{Corollary}[section]
\newtheorem{remark}{Remark}[section]
\renewcommand{\theequation}{\thesection.\arabic{equation}}
\catcode`@=11 \@addtoreset{equation}{section} \catcode`@=12
\maketitle{}

\begin{abstract}
In our previous work (arXiv:2510.00812), we have shown the global existence and incompressible limit of weak solutions to the isentropic compressible magnetohydrodynamic equations involving ripped density and large initial energy in the whole plane. In this paper we generalize such results to the case of two-dimensional bounded convex domains under Navier-slip boundary conditions. When comparing to the known results for
global solutions of the initial-boundary value problem, we obtain uniform {\it a priori} estimates independent of the bulk viscosity coefficient.
\end{abstract}

\textit{Key words and phrases}.  Magnetohydrodynamic equations; global weak solutions; incompressible limit; slip boundary conditions; large initial data; vacuum.

2020 \textit{Mathematics Subject Classification}. 76W05; 76N10; 35B40.

%%%%%%%%%%%%%%%%%%%%%%%%%%%%%%%%%%%%%%%%%%%%%%%%%%%%%%%%%%%%%%%%%%%%%%%%%%%%%%%%%%%%%%%%%%%%%%%%%%

\tableofcontents

\section{Introduction}
\subsection{Background and motivation}
We continue to study global weak solutions and incompressible limit of two-dimensional (2D) isentropic compressible magnetohydrodynamic equations with discontinuous initial data and vacuum when the bulk viscosity is suitably large.
Let $\Omega\subset\mathbb{R}^2$ be a bounded convex domain with smooth boundary $\partial\Omega$, the evolution of a conducting fluid under the effect of a electromagnetic field occupying the spatial domain $\Omega$ is governed by the
isentropic compressible magnetohydrodynamic equations
\begin{align}\label{a1}
\begin{cases}
\rho_t+\divv(\rho\mathbf{u})=0,\\
(\rho\mathbf{u})_t+\divv(\rho\mathbf{u}\otimes\mathbf{u})+\nabla P=
\mu\Delta\mathbf{u}+(\mu+\lambda)\nabla\divv\mathbf{u}+\mathbf{B}\cdot\nabla\mathbf{B}-\frac12\nabla|\mathbf{B}|^2,\\
\mathbf{B}_t+\mathbf{u}\cdot\nabla\mathbf{B}-\mathbf{B}\cdot\nabla\mathbf{u}+\mathbf{B}\divv\mathbf{u}=\nu\Delta\mathbf{B},\\
\divv\mathbf{B}=0
\end{cases}
\end{align}
with the given initial data
\begin{equation}
(\rho,\mathbf{u},\mathbf{B})|_{t=0}=(\rho_0,\mathbf{u}_0,\mathbf{B}_0)(\mathbf{x}),\ \ \mathbf{x}\in\Omega,
\end{equation}
and Navier-slip boundary conditions
\begin{equation}\label{a3}
\mathbf{u}\cdot \mathbf{n}=\curl\mathbf{u}=0,\ \mathbf{B}\cdot \mathbf{n}=\curl\mathbf{B}=0,\ \mathbf{x}\in\partial\Omega,\ t>0,
\end{equation}
where $\mathbf{n}=(n^1,n^2)$ is the unit outward normal vector to $\partial\Omega$. Here the unknowns $\rho$, $\mathbf{u}=(u^1,u^2)$, $\mathbf{B}=(B^1,B^2)$, and $P=P(\rho)=a\rho^\gamma\ (a>0,\gamma>1)$ represent the density, velocity, magnetic field, and pressure, respectively. The constants $\mu$ and $\lambda$ represent shear viscosity and bulk viscosity, respectively, satisfying the physical restrictions
\begin{equation*}
\mu>0,\ \ \ \mu+\lambda\geq0,
\end{equation*}
while $\nu>0$ is the resistivity coefficient.

Upon integrating by parts, we find that system \eqref{a1} verifies the global energy law
\begin{align*}
\int_{\Omega}\bigg(\frac{1}{2}\rho |\mathbf{u}|^2+\frac{1}{2} |\mathbf{B}|^2+G(\rho)\bigg)\mathrm{d}\mathbf{x}
+\int_0^t\int_{\Omega}\big[(2\mu+\lambda)(\divv \mathbf{u})^2+\mu(\curl\mathbf{u})^2+\nu(\curl \mathbf{B})^2\big]\mathrm{d}\mathbf{x}\mathrm{d}\tau
\leq C_0,
\end{align*}
with the initial total energy $C_0$ and the potential energy $G(\rho)$ being given by
\begin{equation}\label{1.4}
C_0\triangleq\int_{\Omega}
\Big(\frac{1}{2}\rho_0|\mathbf{u}_0|^2+\frac{1}{2}|\mathbf{B}_0|^2+G(\rho_0)\Big)\mathrm{d}\mathbf{x},~~
G(\rho)\triangleq\rho\int_{\bar{\rho}}^\rho\frac{P(\xi)-P(\bar{\rho})}{\xi^2}\mathrm{d}\xi,
\end{equation}
where the average of density over $\Omega$, due to the mass equation \eqref{a1}$_1$, is a positive constant
\begin{equation*}
  \bar{\rho}\triangleq\frac{1}{|\Omega|}\int_\Omega \rho\mathrm{d}\mathbf{x}=\frac{1}{|\Omega|}\int_\Omega \rho_0\mathrm{d}\mathbf{x}.
\end{equation*}
Asymptotically, the incompressibility is encoded in the global energy law via the bulk viscosity limit $\lambda\rightarrow\infty$.

Magnetohydrodynamics (MHD) provides a continuum framework for studying the interplay between fluid dynamics and electromagnetism in conducting media, with broad applications in astrophysics, geophysics, and plasma physics (see, e.g., \cite{DA17}). The core mathematical challenge, however, stems from the strongly coupled and nonlinear nature of the governing equations, where hydrodynamic transport and magnetic induction mutually constrain each other. This coupling renders the establishment of global well-posedness exceptionally difficult, yet a rigorous mathematical foundation is essential for reliable physical prediction. Let us point out that system \eqref{a1} can be derived by combining the Navier--Stokes equations for isentropic compressible flow with Maxwell's equations in free space and Ohm's law, and we refer the reader to \cite[Chapter 3]{LT12} for a detailed derivation.

There has been very exciting research on the multi-dimensional isentropic compressible MHD equations and important progress has been made on the global well-posedness over the past four decades. The first global result is due to Kawashima \cite{K84} where he proves the existence of global-in-time classical solutions in 2D if the initial data are sufficiently close to the constant state in $H^3$. Subsequently, Li--Yu \cite{LY11} and Zhang--Zhao \cite{ZZ10} independently demonstrated the global existence and uniqueness of classical solutions for initial data that are small perturbations in $H^3(\mathbb{R}^3)$ around a non-vacuum equilibrium state (constant). Moreover, it is also interesting to deal with the presence of vacuum where new phenomena will appear because the momentum equation becomes singular and degenerate near the vacuum region. In this case, the 3D and 2D Cauchy problem were shown to admit global strong solutions in \cite{LXZ13} and \cite{LSX16}, respectively, with initial data having small energy but possibly large oscillations. One may ask if we can weaken the small-energy assumption. The global well-posedness of classical solutions for \eqref{a1} in $\mathbb{R}^3$ derived in \cite{HHPZ17} ensures this possibility since only the quantity $\big[(\gamma-1)^{1/9}+\nu^{-1/4}\big]C_0$ is assumed to be suitably small. In addition to the Cauchy problem, the initial-boundary value problem is equally important physically and mathematically. Recently, the result in \cite{LXZ13} was extended to the cases of 3D bounded domains and exterior domains with Navier-slip boundary conditions in \cite{CHPS23,CHS24}. Particularly, these results reveal that different types of boundaries require generally distinct approaches.

Another category concerning the solvability of \eqref{a1} is that of
the so-called finite-energy weak solutions: solutions in the sense of distributions satisfying the energy inequality for which one can guarantee their global existence for arbitrary large initial data.
Based on classical weak convergence methods developed by P.-L. Lions \cite{PL98} and Feireisl \cite{F04}, Hu and Wang \cite{HW10} proved the global existence and large-time behavior of finite-energy weak solutions in 3D bounded domains under Dirichlet boundary conditions when the adiabatic exponent $\gamma>\frac32$. These weak solutions provide valuable insights into the system's behavior in extreme regimes although few things are known regarding the uniqueness of such solutions.

A third category of results has to do with an intermediate regularity functional framework which was pioneered in the works of Hoff \cite{Hoff95,Hoff95*,Hoff02,HS08} (which we will call Hoff's solutions).
By intermediate regularity we mean that these solutions possess stronger regularity than finite-energy weak solutions in the sense that particle paths can be defined in non-vacuum regions, yet weaker than strong solutions as they may have discontinuous density along some curves (in 2D) or surfaces (in 3D) (see \cite{Hoff02}). From this perspective, Suen and Hoff \cite{SH12} established the global existence of Hoff's solutions to \eqref{a1} provided the gradients of initial velocity and magnetic field are bounded in $L^2(\mathbb{R}^3)$ and the initial density is positive and essentially bounded. Such a momentous result was soon extended to the presence of initial vacuum in \cite{LYZ13}. Meanwhile, with improved regularities for initial data, the uniqueness and continuous dependence of Hoff's solutions as well as optimal time-decay rates were obtained in \cite{S20} and \cite{WZZ21}, respectively, providing a crucial generalization of the existence theory in \cite{SH12}. It should be emphasized that all results in \cite{SH12,S20,LYZ13,WZZ21} require small initial energy. However, these solutions are interesting as they allow us to work with discontinuous density while granting some extra regularity for the velocity field.

More recently, the authors of the present paper studied the Cauchy problem of \eqref{a1} in $\mathbb{R}^2$, and showed the global existence and incompressible limit of Hoff's solutions in \cite{WWZ3} for large bulk viscosity. {\it A natural question then appears: is it possible to establish the global-in-time existence and incompressible limit of Hoff's solutions to the 2D initial-boundary value problem \eqref{a1}--\eqref{a3}? The main contribution of this paper is to show that this is indeed the case}.
Precisely, we would like to address the problem \eqref{a1}--\eqref{a3} supplemented with general arbitrary large initial energy involving merely nonnegative bounded density. We will prove the global existence of Hoff's solutions to \eqref{a1}--\eqref{a3} with large initial data as long as the bulk viscosity coefficient is properly large and investigate the limiting behavior of such solutions as $\lambda\rightarrow\infty$. As mentioned above, the recovery of incompressibility in the limit leads us to expect a global solution to the inhomogeneous incompressible magnetohydrodynamic system in $\Omega\times(0,+\infty)$:
\begin{align}\label{1.5}
\begin{cases}
\varrho_t+{\bf v}\cdot\nabla\varrho=0,\\
\varrho{\bf v}_t+\varrho{\bf v}\cdot\nabla {\bf v}+\nabla \Pi=\mu\Delta {\bf v}+{\bf b}\cdot\nabla{\bf b}-\frac12\nabla|{\bf b}|^2,\\
{\bf b}_t+\mathbf{v}\cdot\nabla{\bf b}={\bf b}\cdot\nabla\mathbf{v}+\nu\Delta{\bf b},\\
\divv{\bf v}=\divv{\bf b}=0,\\
(\varrho,{\bf v},{\bf b})|_{t=0}=(\rho_0,{\bf v}_0,{\bf B}_0),
\end{cases}
\end{align}
where ${\bf v}_0$ is the Leray-Helmholtz projection of ${\bf u}_0$ on divergence-free vector fields.

\subsection{Main results}
Before stating our main results, we first formulate the notations and conventions adopted herein. We denote by $C$ a generic positive constant which may vary at different places. The symbol $\Box$ marks the end of a proof, $A:B$ represents the trace of the matrix product $AB^\top$, and $c\triangleq d$ means $c=d$ by definition. For notational simplicity, we write
\begin{align*}
\int f \mathrm{d}\mathbf{x}=\int_{\Omega} f \mathrm{d}\mathbf{x},~~ f_i=\partial_if\triangleq\frac{\partial f}{\partial x_i},~~
\bar{f}\triangleq\fint f\mathrm{d}\mathbf{x}=\frac{1}{|\Omega|}\int_\Omega f\mathrm{d}\mathbf{x}.
\end{align*}
For $1\le p\le \infty$ and integer $k\ge 0$, we denote the standard Sobolev spaces
\begin{align*}
L^p=L^p(\Omega),\ \ W^{k, p}=W^{k, p}(\Omega),\ \ H^k=W^{k, 2},\ \
H_\omega^k=\{\mathbf{f}\in H^k :\,(\mathbf{f}\cdot \mathbf{n})|_{\partial\Omega}=\curl \mathbf{f}|_{\partial\Omega}=0\}.
\end{align*}
Moreover, for $\alpha\in (0,1]$, the H\" older seminorm of a function ${\bf v}:U\subseteq\overline{\Omega}\rightarrow \mathbb R^2$ is defined by
\begin{align*}
\langle {\bf v}\rangle^\alpha_{U}
=\sup\limits_{\substack{{\bf x}, {\bf y}\in U\\ {\bf x}\neq {\bf y}}}
\frac{|{\bf v}({\bf x})-{\bf v}({\bf y})|}{|{\bf x}-{\bf y}|^\alpha}.
\end{align*}

The analysis relies on three key quantities
\begin{align}\label{1.7}
\begin{cases}
\dot{f}\triangleq f_t+\mathbf{u}\cdot\nabla f,\\
F\triangleq(2\mu+\lambda)\divv\mathbf{u}-(P(\rho)-\bar{P})-\frac12|\mathbf{B}|^2,\\
\omega\triangleq\curl\mathbf{u}=-\nabla^\bot\cdot\mathbf{u}=-\partial_2u^1+\partial_1u^2,
\end{cases}
\end{align}
which represent the material derivative of $f$, the effective viscous flux, and the vorticity, respectively.
In addition, we introduce the Leray-Helmholtz projector
\begin{equation*}
\mathcal{P}\triangleq \text{Id}+\nabla(-\Delta)^{-1}\divv
\end{equation*}
which projects onto the subspace of divergence-free vector fields inheriting the no-penetration boundary condition, along with its complement $\mathcal{Q}\triangleq \text{Id}-\mathcal{P}$. Both $\mathcal{P}$ and $\mathcal{Q}$ are bounded operators on $L^p$ for any $1<p<\infty$.

We recall the weak solutions to the initial-boundary value problem \eqref{a1}--\eqref{a3} as defined in \cite{SH12}.
\begin{definition}\label{d1.1}
A triplet $(\rho, \mathbf{u}, \mathbf{B})$ is said to be a weak solution to the problem \eqref{a1}--\eqref{a3} provided that
\begin{equation*}
\rho\in C([0,\infty);H^{-1}(\Omega)),\ \
(\rho\mathbf{u},\mathbf{B})\in C([0,\infty);\widetilde{H}^{1}(\Omega)^*),\ \
(\nabla\mathbf{u}, \nabla\mathbf{B})\in L^2(\Omega\times(0,\infty))
\end{equation*}
with $\divv \mathbf{B}(\cdot,t)=0$ in $\mathcal{D}'(\Omega)$ for $t>0$ and  $(\rho,\mathbf{u},\mathbf{B})|_{t=0}=(\rho_0,\mathbf{u}_0,\mathbf{B}_0)$,
where $\widetilde{H}^{1}(\Omega)^*$ is the dual space of $\widetilde{H}^{1}(\Omega)=\{\mathbf{f}\in H^1:(\mathbf{f}\cdot \mathbf{n})|_{\partial\Omega}=0\}$.
Moreover, for any $t_2\ge t_1\ge 0$ and any test function $(\phi,\boldsymbol\psi)({\bf x},t)\in C^1(\overline{\Omega}\times[t_1,t_2])$, with uniformly bounded support in ${\bf x}$ for $t\in[t_1,t_2]$ and satisfying $(\boldsymbol\psi\cdot\mathbf{n})|_{\partial\Omega}=0$, the following identities hold\footnote{Throughout this paper, we will use the Einstein summation over repeated indices convention.}:
\begin{align*}
&\int\rho(\mathbf{x},\cdot)\phi(\mathbf{x},\cdot)
\mathrm{d}\mathbf{x}\Big|_{t_1}^{t_2}=\int_{t_1}^{t_2}\int(\rho\phi_t+
\rho\mathbf{u}\cdot\nabla\phi)\mathrm{d}\mathbf{x}\mathrm{d}t,\\
&\int_{\Omega}(\rho \mathbf{u}\cdot\boldsymbol\psi)
(\mathbf{x},\cdot)\mathrm{d}\mathbf{x}\Big|_{t_1}^{t_2}
+\int_{t_1}^{t_2}\int\big[(2\mu+\lambda)\divv\mathbf{u}\divv\boldsymbol{\psi}+\mu\curl\mathbf{u}\curl\boldsymbol{\psi}\big]\mathrm{d}\mathbf{x}\mathrm{d}t\notag\\
&\qquad=\int_{t_1}^{t_2}
\int\Big[\rho\mathbf{u}\cdot\boldsymbol{\psi}_t+(\rho\mathbf{u}\otimes\mathbf{u}):\nabla\boldsymbol{\psi}+P\divv\boldsymbol{\psi}
-(\mathbf{B}\otimes\mathbf{B}):\nabla\boldsymbol{\psi}
+\frac{1}{2}|\mathbf{B}|^2\divv\boldsymbol{\psi}
\Big]\mathrm{d}\mathbf{x}\mathrm{d}t,\\
&\int(\mathbf{B}\cdot\boldsymbol\psi)(\mathbf{x},\cdot)
\mathrm{d}\mathbf{x}\Big|_{t_1}^{t_2}=\int_{t_1}^{t_2}\int\big[\mathbf{B}\cdot\boldsymbol{\psi}_t
+(\mathbf{u}\otimes\mathbf{B}):\nabla\boldsymbol{\psi}-(\mathbf{B}\otimes\mathbf{u}):\nabla\boldsymbol{\psi}
-\nu\curl\mathbf{B}\curl\boldsymbol{\psi}\big]\mathrm{d}\mathbf{x}\mathrm{d}t.
\end{align*}
\end{definition}
For the initial data $(\rho_0,\mathbf{u}_0,\mathbf{B}_0)$, we always assume that there exist two positive constants $\hat{\rho}$ and $M$ (not necessarily small) satisfying
\begin{gather}\label{c1}
0\leq\inf\rho_0\leq\sup\rho_0\leq\hat{\rho},\ \ (\mathbf{u}_0,\mathbf{B}_0)\in H_\omega^{1},\ \ \divv \mathbf{B}_0=0,
\\
C_0+(2\mu+\lambda)\|\divv\mathbf{u}_0\|_{L^2}^2+\mu\|\curl\mathbf{u}_0\|_{L^2}^2+\nu\|\curl\mathbf{B}_0\|_{L^2}^2
\leq M.\label{c2}
\end{gather}

We now state our first result on the global existence of weak solutions.

\begin{theorem}\label{t1.1}
Let \eqref{c1} and \eqref{c2} be satisfied, there exists a positive number $D$ depending only on $\Omega$, $\hat{\rho}$, $a$, $\gamma$, $\nu$, and $\mu$ such that if
\begin{equation}\label{lam}
\lambda\geq\exp\bigg\{(2+M)^{e^{D(1+C_0)^3}}\bigg\},
\end{equation}
then the initial-boundary value problem \eqref{a1}--\eqref{a3} admits a global weak solution $(\rho,\mathbf{u},\mathbf{B})$ in the sense of Definition $\ref{d1.1}$ satisfying
\begin{equation}\label{reg}
\begin{cases}0\leq\rho(\mathbf{x},t)\leq2\hat{\rho}~a.e.~\mathrm{on}~\Omega\times[0,\infty),\\
(\rho,\sqrt{\rho}\mathbf{u},\mathbf{B})\in C([0,\infty);L^2(\Omega)),~(\mathbf{u},\mathbf{B})\in L^\infty(0,\infty;H^1(\Omega)),\\
(\nabla^2\mathcal{P}\mathbf{u},\nabla F,\sqrt{\rho}\dot{\mathbf{u}},\mathbf{B}_t,\nabla\curl\mathbf{B})\in L^2(\Omega\times(0,\infty)),\\
\sigma^{\frac{1}{2}}(\sqrt{\rho}\dot{\mathbf{u}},\mathbf{B}_t,\nabla\curl\mathbf{B})\in L^\infty(0,\infty;L^2(\Omega)),~ \sigma^{\frac{1}{2}}(\nabla\dot{\mathbf{u}},\curl\mathbf{B}_t)
\in L^2(\Omega\times(0,\infty)),
\end{cases}
\end{equation}
where $\sigma\triangleq\min\{1,t\}$.
\end{theorem}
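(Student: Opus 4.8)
The plan is to realize $(\rho,\mathbf{u},\mathbf{B})$ as the limit of smooth approximations and to transfer a family of \emph{a priori} estimates, uniform in both the regularization parameter and the bulk viscosity $\lambda$, through a compactness argument. First I would regularize the data: replace $\rho_0$ by a smooth $\rho_0^\delta\ge\delta>0$ with $\rho_0^\delta\to\rho_0$ in the appropriate topology, and mollify $(\mathbf{u}_0,\mathbf{B}_0)$ within $H^1_\omega$ so that \eqref{c1}--\eqref{c2} persist up to vanishing error. For strictly positive density the local-in-time theory yields a smooth solution on some $[0,T_\delta)$; the whole game is then to extend $T_\delta$ to $+\infty$ by \emph{a priori} bounds that do not degenerate as $\delta\to0$ or $\lambda\to\infty$.

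The estimate hierarchy rests on the effective viscous flux $F$ and vorticity $\omega$ from \eqref{1.7}. Rewriting the momentum balance in \eqref{a1} via the 2D identity $\mu\Delta\mathbf{u}+(\mu+\lambda)\nabla\divv\mathbf{u}=(2\mu+\lambda)\nabla\divv\mathbf{u}-\mu\nabla^\bot\omega$ together with $\mathbf{B}\cdot\nabla\mathbf{B}=\divv(\mathbf{B}\otimes\mathbf{B})$ gives the clean form $\rho\dot{\mathbf{u}}=\nabla F-\mu\nabla^\bot\omega+\divv(\mathbf{B}\otimes\mathbf{B})$, whence $F$ solves a Neumann-type problem and $\omega$ a Dirichlet problem (since $\curl\mathbf{u}|_{\partial\Omega}=0$). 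Starting from the global energy law, I would: (i) test the momentum equation with $\dot{\mathbf{u}}$ and the induction equation with $\mathbf{B}_t$ to control $\|\nabla\mathbf{u}\|_{L^2}$, $\|\nabla\mathbf{B}\|_{L^2}$ in $L^\infty_t$ and $\sqrt{\rho}\dot{\mathbf{u}},\mathbf{B}_t,\nabla\curl\mathbf{B}$ in $L^2_{t,\mathbf{x}}$; (ii) apply the material derivative to the momentum equation and $\partial_t$ to the induction equation, test against $\dot{\mathbf{u}}$ and $\mathbf{B}_t$, and use the weight $\sigma=\min\{1,t\}$ to absorb the singularity at $t=0$, producing the weighted bounds in \eqref{reg}. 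Elliptic regularity for $F$ and $\omega$ then upgrades these to the stated $L^2_{t,\mathbf{x}}$ control of $\nabla^2\mathcal{P}\mathbf{u}$ and $\nabla F$.

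The density bound $0\le\rho\le2\hat{\rho}$ is the lever that links everything to the threshold \eqref{lam}. From the definition of $F$ one has $\divv\mathbf{u}=(2\mu+\lambda)^{-1}[F+(P-\bar P)+\tfrac12|\mathbf{B}|^2]$, so along particle trajectories the continuity equation reads $\dot\rho=-\rho\,\divv\mathbf{u}$, an ODE for $\rho$ whose forcing is damped by the large factor $(2\mu+\lambda)^{-1}$. Tracking the constants in the $\lambda$-independent bounds for $F$ and $|\mathbf{B}|^2$ and integrating this ODE shows that if $\lambda$ exceeds a double-exponential of the energy datum $M$, the density cannot exit $[0,2\hat\rho]$; the iterated-exponential shape of \eqref{lam} is exactly what is needed to close the bootstrap in which each estimate level feeds the next with constants growing like powers of $(1+C_0)$.

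The hardest part is keeping all constants independent of $\lambda$ while controlling the boundary contributions peculiar to the Navier-slip problem on a bounded domain. When testing against $\dot{\mathbf{u}}$ and when running the elliptic estimates for $F$ and $\omega$, integration by parts generates boundary integrals that, after using $\mathbf{u}\cdot\mathbf{n}=\curl\mathbf{u}=0$, reduce to quantities weighted by the second fundamental form of $\partial\Omega$; the convexity of $\Omega$ forces these to carry the favorable sign, which is indispensable for absorption. In addition, the magnetic coupling term $\divv(\mathbf{B}\otimes\mathbf{B})$ in the momentum equation and $\mathbf{B}\cdot\nabla\mathbf{u}-\mathbf{B}\,\divv\mathbf{u}$ in the induction equation must be estimated so that the $\mathbf{B}$- and $\mathbf{u}$-estimates close simultaneously without circular feedback; this requires treating the two equations as one coupled system and exploiting the parabolic structure of the equation for $\curl\mathbf{B}$, whose boundary trace also vanishes. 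Once the $\lambda$-uniform bounds are secured, passing $\delta\to0$ is routine except for the strong convergence of $\rho$, which---being the signature of Hoff's framework---follows from the improved regularity of $F$ through an Aubin--Lions compactness argument, allowing passage to the limit in the nonlinear pressure.
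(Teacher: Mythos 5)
Your overall architecture --- mollify the data, invoke the local theory for strictly positive density, prove $\lambda$- and $\delta$-uniform \emph{a priori} bounds built on the effective viscous flux and vorticity, close the density bound by integrating the continuity equation along particle paths using $\divv\mathbf{u}=(2\mu+\lambda)^{-1}\big[F+(P-\bar P)+\frac12|\mathbf{B}|^2\big]$, and pass to the limit by compactness --- is the same as the paper's. But two of your steps conceal genuine gaps. First, your step (i) does not close for large initial energy as written. Testing the momentum equation against $\mathbf{u}_t$ (or $\dot{\mathbf{u}}$) produces the trilinear term $\int\rho\dot{\mathbf{u}}\cdot(\mathbf{u}\cdot\nabla)\mathbf{u}\,\mathrm{d}\mathbf{x}$, controlled by $\|\sqrt{\rho}\dot{\mathbf{u}}\|_{L^2}\|\sqrt{\rho}\mathbf{u}\|_{L^4}\|\nabla\mathbf{u}\|_{L^4}$; for small energy this can be absorbed, but for arbitrary $C_0$ and $M$ a plain Gronwall argument fails. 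The paper's route is the Desjardins-type logarithmic interpolation inequality on bounded \emph{convex} domains (Lemma \ref{log}), which converts this term into a differential inequality of the form $f'\le g\,f\ln f$ (see \eqref{3.10} and the end of the proof of Lemma \ref{l3.2}); the resulting log-Gronwall argument is precisely what produces the double-exponential threshold \eqref{lam}. This is also where the \emph{a priori} hypothesis \eqref{3.1} on $(2\mu+\lambda)^{-2}\int_0^T\|P-\bar P\|_{L^4}^4\,\mathrm{d}t$ enters, which you never formulate; without it the pressure contribution in \eqref{3.9}--\eqref{3.10} is not time-integrable and the bootstrap of Proposition \ref{p3.1} cannot even be set up. Note in particular that convexity is consumed by Lemma \ref{log}, not by any sign condition on boundary integrals.

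Second, your claim that the Navier-slip boundary integrals ``carry the favorable sign'' by convexity of $\Omega$ is not the mechanism, and I do not see how it could be: the boundary terms arising in the weighted material-derivative estimate involve traces of $F_t$, $P_t+\divv(P\mathbf{u})$, and $\mathbf{B}\cdot\mathbf{B}_t$ multiplied by $\dot{\mathbf{u}}\cdot\mathbf{n}$, none of which is sign-definite, and $F_t$, $P_t$ admit no trace control at all. The paper instead (a) uses the kinematic identity $\dot{\mathbf{u}}\cdot\mathbf{n}=-\mathbf{u}\cdot\nabla\mathbf{n}\cdot\mathbf{u}$ on $\partial\Omega$ from \eqref{2.12}, (b) groups the boundary terms so that the dangerous ones cancel or recombine into $\int_{\partial\Omega}(F-\bar P)_t\,\dot{\mathbf{u}}\cdot\mathbf{n}\,\mathrm{d}s$, which is then placed under a time derivative, and (c) converts the remaining boundary integrals back into volume integrals by the divergence theorem before estimating (see \eqref{3.31}--\eqref{3.32}). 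Relatedly, because $\bar F\neq0$ in the presence of the magnetic field, Poincar\'e's inequality cannot be applied to $F$ directly; the paper introduces $G=F+\frac12|\mathbf{B}|^2$ with $\bar G=0$ (see \eqref{2.10}) to obtain the $\lambda$-uniform $\|F\|_{H^1}$ bound on which all of these boundary estimates rely. A further structural point you miss is that the weighted estimate must be run on $(2\mu+\lambda)\|\divf\mathbf{u}\|_{L^2}^2$ rather than $(2\mu+\lambda)\|\divv\dot{\mathbf{u}}\|_{L^2}^2$ in order to isolate $\lambda$ (see \eqref{3.23}). These devices are not cosmetic: without them your scheme stalls exactly at the boundary terms you yourself identify as ``the hardest part.''
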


The next result will treat the incompressible limit (characterised by the large value of the bulk viscosity) of the global weak solutions established in Theorem \ref{t1.1}.

\begin{theorem}\label{t1.2}
Let $\{(\rho^\lambda,{\bf u}^\lambda,{\bf B}^\lambda)({\bf x},t)\}$ be the family of solutions from Theorem \ref{t1.1}. Then, there exists a subsequence $\{\lambda_k\}$
with $\lambda_k\rightarrow\infty$ such that
\begin{align}\label{1.11}
 &\rho^{\lambda_{k}}\rightarrow \varrho ~~\text{strongly in} ~ L^2(K), \ \
 \text{for any compact set} \ K \subset \Omega \text{ and any} \ t\ge 0 ,\\
 &{\bf u}^{\lambda_{k}}\rightarrow {\bf v},~~{\bf B}^{\lambda_{k}}\rightarrow {\bf b}
 ~~\text{uniformly on compact sets in}~ \Omega\times(0,\infty),\notag
\end{align}
where $(\varrho,{\bf v},{\bf b})$ is a global weak solution to the inhomogeneous incompressible MHD equations \eqref{1.5} in the sense of Definition \ref{d1.2} below.
\end{theorem}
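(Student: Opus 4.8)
The plan is to combine the $\lambda$-uniform a priori bounds underlying Theorem \ref{t1.1} with a compactness argument, and then to pass to the limit in the weak formulation of Definition \ref{d1.1} tested against \emph{divergence-free} fields, so that the stiff terms carrying the factor $2\mu+\lambda$, together with the pressure, never have to be controlled individually. Writing $(\rho^\lambda,\mathbf{u}^\lambda,\mathbf{B}^\lambda)$ for the solutions and recalling that all the norms appearing in \eqref{reg} are bounded independently of $\lambda$ once \eqref{lam} holds, I would first extract, along a suitable sequence $\lambda_k\to\infty$, the limits $\rho^{\lambda_k}\overset{*}{\rightharpoonup}\varrho$ in $L^\infty$ and $(\mathbf{u}^{\lambda_k},\mathbf{B}^{\lambda_k})\overset{*}{\rightharpoonup}(\mathbf{v},\mathbf{b})$ in $L^\infty(0,\infty;H^1)$, along with the weak limits in $L^2(\Omega\times(0,\infty))$ of $\nabla^2\mathcal{P}\mathbf{u}^{\lambda_k}$, $\nabla F^{\lambda_k}$, $\sqrt{\rho^{\lambda_k}}\dot{\mathbf{u}}^{\lambda_k}$, $\mathbf{B}^{\lambda_k}_t$ and $\nabla\curl\mathbf{B}^{\lambda_k}$.

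The incompressibility of the limit is read off directly from the energy law: since $(2\mu+\lambda)\|\divv\mathbf{u}^\lambda\|_{L^2(\Omega\times(0,\infty))}^2\le C_0$, one has $\divv\mathbf{u}^{\lambda}\to0$ strongly in $L^2(\Omega\times(0,\infty))$, whence $\divv\mathbf{v}=0$; the constraint $\divv\mathbf{b}=0$ is inherited from $\divv\mathbf{B}^\lambda=0$. For the velocity and the magnetic field I would upgrade the weak convergence to \emph{uniform convergence on compact subsets} of $\Omega\times(0,\infty)$ by invoking the $\lambda$-uniform interior H\"older estimates for $\mathbf{u}^\lambda$ and $\mathbf{B}^\lambda$ coming from the elliptic representations of $F^\lambda$, $\omega^\lambda$ and $\curl\mathbf{B}^\lambda$ through the time-weighted bounds $\sigma^{1/2}(\sqrt{\rho}\dot{\mathbf{u}},\mathbf{B}_t,\nabla\curl\mathbf{B})\in L^\infty(L^2)$, and then applying Arzel\`a--Ascoli; the weight $\sigma$ is precisely what confines this statement to the open interval $(0,\infty)$.

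The most delicate point, and what I regard as the main obstacle, is the strong convergence of the density asserted in \eqref{1.11}, since the continuity equation carries no smoothing. First, passing to the limit in $\rho^\lambda_t+\divv(\rho^\lambda\mathbf{u}^\lambda)=0$ is legitimate because $\rho^{\lambda_k}\mathbf{u}^{\lambda_k}\rightharpoonup\varrho\mathbf{v}$ (weak-$*$ density times locally uniform velocity), so $\varrho$ solves the divergence-free transport equation $\varrho_t+\mathbf{v}\cdot\nabla\varrho=0$; this step uses only the weak density limit and so avoids any circularity. Since $\rho^\lambda\in L^\infty$ and $\mathbf{v}\in L^2(0,T;H^1)$ with $\divv\mathbf{v}=0$, DiPerna--Lions renormalization applies and yields $\|\varrho(\cdot,t)\|_{L^2}=\|\rho_0\|_{L^2}$ for all $t$. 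On the compressible side, integrating the renormalized identity for $(\rho^\lambda)^2$ over $\Omega$ gives $\frac{d}{dt}\|\rho^\lambda\|_{L^2}^2=-\int(\rho^\lambda)^2\divv\mathbf{u}^\lambda$, and $\|(\rho^\lambda)^2\divv\mathbf{u}^\lambda\|_{L^1}\le C\|\divv\mathbf{u}^\lambda\|_{L^2}\to0$ forces $\|\rho^{\lambda_k}(\cdot,t)\|_{L^2}\to\|\rho_0\|_{L^2}=\|\varrho(\cdot,t)\|_{L^2}$ for every $t\ge0$. Combined with $\rho^{\lambda_k}(\cdot,t)\rightharpoonup\varrho(\cdot,t)$ in $L^2$, valid pointwise in $t$ from the equicontinuity of $\rho^{\lambda_k}$ in $C([0,T];H^{-1})$ (the time derivative being bounded in $H^{-1}$ since $\rho^\lambda\mathbf{u}^\lambda$ is bounded in $L^\infty(L^p)$), the convergence of the norms upgrades weak to strong convergence in $L^2(K)$.

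Finally I would insert these convergences into the momentum and induction identities of Definition \ref{d1.1} written against divergence-free test fields $\boldsymbol\psi$ with $(\boldsymbol\psi\cdot\mathbf{n})|_{\partial\Omega}=0$. For such $\boldsymbol\psi$ the terms $(2\mu+\lambda)\divv\mathbf{u}^\lambda\,\divv\boldsymbol\psi$, $P\divv\boldsymbol\psi$ and $\tfrac12|\mathbf{B}^\lambda|^2\divv\boldsymbol\psi$ all drop out, so the stiff pressure contribution disappears and no control of $P(\rho^\lambda)$ beyond its $L^\infty$ bound is required. The quadratic terms $\rho^\lambda\mathbf{u}^\lambda\otimes\mathbf{u}^\lambda$, $\mathbf{B}^\lambda\otimes\mathbf{B}^\lambda$ and $\mathbf{u}^\lambda\otimes\mathbf{B}^\lambda$ pass to their limits by the locally uniform convergence of $\mathbf{u}^\lambda,\mathbf{B}^\lambda$ together with the weak-$*$ (now strong) convergence of $\rho^\lambda$, while the dissipative terms $\mu\curl\mathbf{u}^\lambda\curl\boldsymbol\psi$ and $\nu\curl\mathbf{B}^\lambda\curl\boldsymbol\psi$ pass by the weak $L^2$ convergence of the curls. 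Letting $t_1\downarrow0$ with the time continuity in \eqref{reg} recovers the initial data, and the resulting identities, together with $\divv\mathbf{v}=\divv\mathbf{b}=0$, are exactly the assertion that $(\varrho,\mathbf{v},\mathbf{b})$ is a global weak solution of \eqref{1.5} in the sense of Definition \ref{d1.2}.
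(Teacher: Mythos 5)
Your proposal is correct in outline and coincides with the paper's proof in most respects (extraction of weak limits, $\divv\mathbf{v}=0$ from the $\lambda$-weighted energy dissipation, uniform convergence of $\mathbf{u}^\lambda,\mathbf{B}^\lambda$ on compact subsets of $\Omega\times(0,\infty)$ via the $\sigma$-weighted H\"older estimates and Arzel\`a--Ascoli, and passage to the limit against divergence-free test fields so that the stiff term and the pressure drop out). Where you genuinely diverge is the strong convergence of the density. You argue by conservation of norms: renormalizing $(\rho^\lambda)^2$ gives $\|\rho^{\lambda}(\cdot,t)\|_{L^2}^2-\|\rho_0\|_{L^2}^2=-\int_0^t\int(\rho^\lambda)^2\divv\mathbf{u}^\lambda\to0$, DiPerna--Lions renormalization of the limit transport equation gives $\|\varrho(\cdot,t)\|_{L^2}=\|\rho_0\|_{L^2}$, and weak convergence plus convergence of norms upgrades to strong $L^2$ convergence. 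The paper instead renormalizes $(\rho^{\epsilon,\lambda}-\rho_0^\epsilon)^2$ directly and obtains the quantitative bound $\|(\rho^\lambda-\rho_0)(\cdot,t)\|_{L^2}^2\le C(t)\lambda^{-1/2}$, and then uses the Friedrichs commutator lemma (Lemma \ref{lcom}) on the limit equation to show $\varrho(\cdot,t)\equiv\rho_0$ on compact sets; combining the two gives \eqref{1.11}. The paper's route buys a convergence rate and the stronger structural conclusion that the limiting density does not move at all, from which the time-continuity \eqref{1.12} is immediate; your route is the more classical Hilbert-space argument and avoids identifying $\varrho$ explicitly, but you then owe \eqref{1.12} to the DiPerna--Lions theory (which you invoke anyway) and you should make explicit that $\varrho(\cdot,0)=\rho_0$ and that the renormalized identity for $(\rho^\lambda)^2$ survives the $\epsilon\to0$ limit from the smooth approximations, since $\rho^\lambda$ itself is only a weak solution. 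Both arguments rest on the same two pillars — the $O\big((2\mu+\lambda)^{-1}\big)$ bound on $\int_0^\infty\|\divv\mathbf{u}^\lambda\|_{L^2}^2\,\mathrm{d}t$ and commutator/renormalization estimates for transport by an $H^1$ field tangent to $\partial\Omega$ — so I regard your version as a valid alternative rather than a gap.
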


\begin{definition}\label{d1.2}
A triplet $(\varrho, {\bf v}, {\bf b})$ is said to be a weak solution to the problem \eqref{1.5} provided that
\begin{gather}
\varrho\in L^\infty(\Omega\times (0,\infty)),~~
(\sqrt{\varrho}\mathbf{v},\mathbf{b})\in L^\infty([0,\infty); L^2(\Omega)),~~
({\bf v}, \nabla{\bf v},\nabla\mathbf{b})\in L^2(\Omega\times (0,\infty)),\notag\\
\varrho\in C([0,\infty);L^{2}(\Omega)).\label{1.12}
\end{gather}
Moreover, for any $t_2\geq t_1\geq 0$ and any $C^1$ test function $(\phi,\boldsymbol\psi)$ just as in Definition \ref{d1.1},
which additionally satisfies $\divv\boldsymbol\psi(\cdot,t)=0$ on $\Omega\times[0,\infty)$, the following identities hold:
\begin{align}\label{1.13}
\int(\varrho\phi)({\bf x},\cdot)\mathrm{d}{\bf x}\Big |_{t_1}^{t_2}&=\int_{t_1}^{t_2}\int(\varrho\phi_t+\varrho{\bf v}\cdot\nabla\phi)\mathrm{d}{\bf x}\mathrm{d}t,
\\
\int(\varrho{\bf v}\cdot\boldsymbol\psi)({\bf x},\cdot)\mathrm{d}{\bf x}\Big |_{t_1}^{t_2}
&=
\int_{t_1}^{t_2}\int\big[\varrho{\bf v}\cdot\boldsymbol\psi_t
+(\varrho\mathbf{v}\otimes\mathbf{v}):\nabla\boldsymbol{\psi}
-(\mathbf{b}\otimes\mathbf{b}):\nabla\boldsymbol{\psi}-\mu\curl\mathbf{v}\curl\boldsymbol{\psi}\big]
\mathrm{d}{\bf x}\mathrm{d}t,\label{1.14}
\\
\int({\bf b}\cdot\boldsymbol\psi)({\bf x},\cdot)
\mathrm{d}\mathbf{x}\Big|_{t_1}^{t_2}&=\int_{t_1}^{t_2}\int
\big[{\bf b}\cdot\boldsymbol\psi_t+(\mathbf{v}\otimes\mathbf{b}):\nabla\boldsymbol{\psi}
-(\mathbf{b}\otimes\mathbf{v}):\nabla\boldsymbol{\psi}-\nu\curl\mathbf{b}\curl\boldsymbol{\psi}\big]
\mathrm{d}\mathbf{x}\mathrm{d}t.\label{1.15}
\end{align}
\end{definition}

Several remarks are in order.

\begin{remark}
It should be noted that Theorem \ref{t1.1} not only holds for arbitrarily large initial energy as long as the bulk viscosity coefficient is suitably large, which is in sharp contrast to \cite{CHPS23,CHS24,LXZ13,SH12,LYZ13,LSX16}
where the smallness condition on the initial energy is needed, but also features an
estimation constant independent of the bulk viscosity coefficient.
\end{remark}

\begin{remark}
Theorems \ref{t1.1} and \ref{t1.2} extend our previous work on the whole plane case \cite{WWZ3} to the bounded convex domains. However, this is a non-trivial generalization because boundary will bring out some new difficulties (see subsection \ref{ssec3}). Naturally, one may ask if analogous results hold for 2D domains with distinct boundary conditions, such as Dirichlet or free boundary conditions. New ideas are required to tackle these cases, which will be left for future studies.
\end{remark}

\begin{remark}
Compared with our previous results regarding the isentropic compressible Navier--Stokes equations (i.e., $\mathbf{B}\equiv\mathbf{B}_0\equiv\mathbf{0}$ in \eqref{a1}) \cite{WWZ2}, the presence of the magnetic field subject to boundaries poses significant challenges. First, the crucial Poincar\'e inequality is no longer directly applicable to the effective viscous flux; secondly, one needs stronger viscous dissipation to counteract magnetic nonlinearity reflected in the assumption \eqref{lam}.
\end{remark}

\subsection{Strategy of the proof}\label{ssec3}

We now present a roadmap of our approach and the key challenges in the proof.
Our strategy unfolds in two principal steps. The global smooth approximate solutions are constructed by leveraging the local existence theory for initial data with positive density, supplemented by a blow-up criterion (Lemma \ref{l2.1}). This, in turn, paves the way for proving Theorem \ref{t1.1} via compactness arguments. Subsequently, based on the uniform estimates secured in the proof of Theorem \ref{t1.1}, we derive Theorem \ref{t1.2} by a mollification argument. Therefore, the core of our analysis is to establish uniform {\it a priori} estimates independent of the lower bound of density $\rho$ and the bulk viscosity $\lambda$. However, in comparison with our previous work concerning the 2D Cauchy problem \cite{WWZ3}, one of major difficulties lies in dealing with many surface integrals caused by the boundary condition \eqref{a3}. Moreover, it should be emphasized that crucial techniques in the case of 3D initial-boundary value problem \cite{CHPS23} cannot be adopted to the situation treated here, since their arguments are only valid for the small initial energy. Consequently, some new ideas are needed to overcome these obstacles.

It was shown (with minor modification) in \cite{DW15} that if $0<T^*<\infty$ is the maximal existence time of strong solutions to \eqref{a1}--\eqref{a3}, then
\begin{align*}
\limsup\limits_{T\nearrow T^*}\|\rho\|_{L^\infty(0,T;L^\infty)}=\infty,
\end{align*}
which implies that the key uniform-in-$\lambda$ {\it a priori} estimate is to obtain the time-independent upper bound of the density. We begin with the $L^\infty(0, T; L^2)$-norm of the gradient of velocity (see Lemma \ref{l3.2}), from which the key process is to handle the term $\int\rho \dot{\mathbf{u}}\cdot(\mathbf{u}\cdot\nabla)\mathbf{u}\mathrm{d}\mathbf{x}$ in \eqref{3.10} after multiplying \eqref{a1}$_2$ by ${\bf u}_t$.
Based on an important Desjardins-type logarithmic interpolation inequality (see Lemma $\ref{log}$) on the two-dimensional bounded convex domains,
this in turn can be controlled by $\frac{1}{(2\mu+\lambda)^2}\|P(\rho)-\bar{P}\|_{L^4}^4$ (see \eqref{3.9}--\eqref{3.10}).
Hence, the crucial issue is the integrability in time of $\frac{1}{(2\mu+\lambda)^2}\|P(\rho)-\bar{P}\|_{L^4}^4$ when applying Gronwall's inequality, which leads us to assume the {\it a priori hypothesis} \eqref{3.1} in Proposition \ref{p3.1}. Then, the next key ingredient is to complete the proof of the {\it a priori hypothesis}, that is, to show \eqref{3.2}.
According to the momentum equation $\eqref{a1}_2$, we observe that
\begin{equation*}
  \divv\mathbf{u}=\frac{-(-\Delta)^{-1}\divv(\rho\dot{\mathbf{u}}-\mathbf{B}\cdot\nabla\mathbf{B})+P(\rho)-\bar{P}+\frac12|\mathbf{B}|^2}{2\mu+\lambda},
\end{equation*}
which further inspires us to establish the estimates on the material derivative of the velocity
once the difficulties associated with the magnetic field are overcome
(see Lemmas $\ref{l3.2}$ and $\ref{l3.4}$).

However, since the bulk viscosity is often coupled with the divergence of the velocity, it seems hard to derive the $L^\infty(0,\min\{1,T\};L^2)$-norm for the material derivative of velocity (see Lemma $\ref{l3.4}$) in order to isolate $\lambda$ (see, e.g., \eqref{3.23}--\eqref{3.29}).
In contrast to prior literature concerning slip boundary problems, we have to apply Hoff's strategy to the equation
\begin{equation*}
\rho\dot{\mathbf{u}}-(2\mu+\lambda)\nabla\divv \mathbf{u}+\mu\nabla^\bot\curl \mathbf{u}+\nabla P=\mathbf{B}\cdot\nabla\mathbf{B}-\frac12\nabla|\mathbf{B}|^2
\end{equation*}
rather than
\begin{equation*}
  \rho\dot{\mathbf{u}}-\nabla F+\mu\nabla^\bot\curl\mathbf{u}= \mathbf{B}\cdot\nabla\mathbf{B},
\end{equation*}
thereby giving rise to numerous intractable terms in the absence of the effective viscous flux,
particularly those involving the pressure and the magnetic field. Therefore, we need new approaches to address these issues, as detailed below.

First, it is hard to obtain the estimates for the term
\begin{equation*}
  (2\mu+\lambda)\sigma\int\dot{u}^j\left(\partial_j\divv\mathbf{u}_t +\divv(\mathbf{u}\partial_j\divv\mathbf{u})\right)\mathrm{d}\mathbf{x}.
\end{equation*}
Instead, motivated by \cite{HWZ}, we consider (see \eqref{3.23})
\begin{equation*}
  (2\mu+\lambda)\int(\divf\mathbf{u})^2\mathrm{d}\mathbf{x}~~~~
\text{rather than}
~~~~(2\mu+\lambda)\int(\divv\dot{\mathbf{u}})^2\mathrm{d}\mathbf{x}.
\end{equation*}
But this will make the estimates more delicate and complicated (see \eqref{3.24}--\eqref{3.28}),
especially concerning the additional boundary integrals (see \eqref{3.31}--\eqref{3.32}).

Second, as mentioned above, we have to use the Hodge-type decomposition to isolate the bulk viscosity, where only partial information (curl-free part) from $\|\nabla\mathbf{u}\|_{L^p}$ can be controlled in combination
with $\lambda$. The uniform-in-$\lambda$ estimate for the divergence-free part is a core challenge.
An important consideration is to avoid the occurrence of boundary integrals like
\begin{equation*}
  \int_{\partial\Omega} F(\mathcal{P}\mathbf{u})_t\cdot\nabla(\mathcal{P}\mathbf{u})\cdot\mathbf{n}\mathrm{d}s \ \
   \text{and} \ \ \int_{\partial\Omega} F(\mathcal{P}\mathbf{u})\cdot\nabla(\mathcal{P}\mathbf{u})_t\cdot\mathbf{n}\mathrm{d}s
\end{equation*}
which represents one of our primary objectives in \eqref{3.28} without integration by parts.
This necessitate obtaining uniform bounds of $\|F\|_{L^4}$ (or even $\|F\|_{H^1}$ for boundary integrals).
More precisely, if we turn our attention to the viscosity occurred in the effective viscous flux $F$, the Poincar\'{e} inequality effectively suppresses the power of $\lambda$ in the
isentropic compressible Navier--Stokes equations (where $\bar{F}=0$ with $\mathbf{B}=0$) as that in our previous work \cite{WWZ2}. Owing to the presence of magnetic field, we adopt an indirect approach to the
uniform-in-$\lambda$ estimates (see \eqref{2.10}):
\begin{align*}
 \|F\|_{H^1}\leq C\big(\|\nabla F\|_{L^2}+\|\nabla G\|_{L^2}+\|\mathbf{B}\|_{L^4}^2\big)\leq C\big(\|\nabla F\|_{L^2}+\|\nabla |\mathbf{B}|^2\|_{L^2}+\|\mathbf{B}\|_{L^4}^2\big),
\end{align*}
where $G=F+\frac12|\mathbf{B}|^2
=(2\mu+\lambda)\divv\mathbf{u}-(P(\rho)-\bar{P})$ satisfies $\bar{G}=0$.
Meanwhile, the higher regularity of weak solutions obtained here allows us to work with $\|\nabla^2\mathcal{P}\mathbf{u}\|_{L^2}$ instead of the more problematic $\|\nabla^2\mathbf{u}\|_{L^2}$, resulting in no boundary terms with $\mathcal{P}\mathbf{u}$ in \eqref{3.28}.

Last but not least, the additional presence of the boundary integral terms is analytically intractable because density is not defined on the boundary, especially involving the magnetic field, and the trace theorem will fail in some cases (see \eqref{3.32}). So we have to address the divergence theorem and achieve cancelation among a subset of boundary terms (see \eqref{3.31} and \eqref{3.32}),
circumventing the estimates for some terms such as
\begin{equation*}
  \int_{\partial\Omega} P\dot{\mathbf{u}}\cdot\nabla\mathbf{u}\cdot\mathbf{n}\mathrm{d}s,
  ~~\int_{\partial\Omega}P_t\dot{\mathbf{u}}\cdot\mathbf{n}\mathrm{d}s,~~
  \text{and}~~\int\mathbf{B}\cdot\mathbf{B}_t\dot{\mathbf{u}}\cdot\mathbf{n}\mathrm{d}s.
\end{equation*}
Moreover, an observation of
\begin{equation*}
-\int_{\partial\Omega}\bar{P}_t\dot{\mathbf{u}}\cdot\mathbf{n}\mathrm{d}s=
(\gamma-1)\overline{P\divv\mathbf{u}}\int_{\partial\Omega}\dot{\mathbf{u}}\cdot\mathbf{n}\mathrm{d}s
\end{equation*}
leads us to consider (see \eqref{3.31})
\begin{equation*}
\int_{\partial\Omega}(F-\bar{P})_t\dot{\mathbf{u}}\cdot\mathbf{n}\mathrm{d}s~~~~
\text{rather than}
~~~~\int_{\partial\Omega}F_t\dot{\mathbf{u}}\cdot\mathbf{n}\mathrm{d}s.
\end{equation*}
Furthermore, the divergence theorem and integration by parts enable us to avoid estimating some second-order spatial derivatives for the effective viscous flux (see \eqref{3.32}).

With these obstacles resolved,
we then succeed in deriving the desired estimates on $L^\infty(0,\min\{1,T\};L^2)$-norm (see Lemma $\ref{l3.4}$).
Having these time-independent estimates at hand, we can complete the proof of \textit{a priori hypothesis} by applying Lagrangian
coordinates technique used in \cite{DE97} (see Lemma $\ref{l3.5}$). It should be emphasized that the effective viscous flux and Desjardins-type logarithmic interpolation inequality play essential roles in our analysis.

The rest of the paper is organized as follows. In the next section we recall some known facts and elementary inequalities that will be used later. Section \ref{sec3} is devoted to obtaining {\it a priori} estimates. Then we give the proof of Theorem \ref{t1.1} in Section \ref{sec4}, while the proof of Theorem \ref{t1.2} is carried over to the last section.

\section{Preliminaries}\label{sec2}

This section compiles some well-known facts and elementary inequalities.

\subsection{Auxiliary results and inequalities}
In this subsection we review some known facts and inequalities.
We begin with a lemma concerning the local existence and the possible breakdown of strong solutions to the problem \eqref{a1}--\eqref{a3}, which has been proven in \cite{DW15}.
\begin{lemma}\label{l2.1}
Assume that
\begin{equation*}
\rho_0\in H^{2},~~(\mathbf{u}_0,\mathbf{B}_0)\in H_\omega^{2},~~\divv \mathbf{B}_0=0, ~~\text{and}~~\inf\limits_{\mathbf{x}\in\Omega}\rho_0(\mathbf{x})>0,
\end{equation*}
then there exists a positive constant $T$ such that the initial-boundary value problem \eqref{a1}--\eqref{a3} admits a unique strong solution $(\rho,\mathbf{u},\mathbf{B})$ satisfying
\begin{equation*}
(\rho,\mathbf{u},\mathbf{B})\in C([0,T]; H^{2}),~~\text{and}~~ \inf_{\Omega\times[0,T]}\rho(\mathbf{x},t)\geq\frac{1}{2}
\inf_{\mathbf{x}\in\Omega}\rho_0(\mathbf{x})>0.
\end{equation*}
Moreover, if $T^*$ is the maximal time of existence, then it holds that
\begin{equation*}
\lim\sup_{T\nearrow T^*}\|\rho\|_{L^\infty(0,T;L^\infty)}=\infty.
\end{equation*}
\end{lemma}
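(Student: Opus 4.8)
The plan is to establish the local existence and uniqueness by a standard linearization-and-iteration scheme, and then to obtain the blow-up criterion by a continuation argument showing that a finite density bound forces finite $H^2$ bounds on the entire solution. Since the initial density is strictly positive, the momentum and induction equations are uniformly parabolic, so the only genuinely hyperbolic component is the continuity equation, which I would treat by the flow-map representation.

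For local existence I would construct approximants $(\rho^{k+1},\mathbf u^{k+1},\mathbf B^{k+1})$ from $(\rho^{k},\mathbf u^{k},\mathbf B^{k})$ by first solving the linear transport equation $\rho^{k+1}_t+\divv(\rho^{k+1}\mathbf u^{k})=0$ for the density, and then the two decoupled linear parabolic systems for $\mathbf u^{k+1}$ and $\mathbf B^{k+1}$, each supplemented with the slip conditions \eqref{a3}. The density is handled via the characteristic formula, which preserves the positive lower bound $\tfrac12\inf\rho_0$ and an upper bound on a short interval as long as $\int_0^T\|\divv\mathbf u^{k}\|_{L^\infty}\,\mathrm{d}t\le\ln 2$, and which yields $\rho^{k+1}\in C([0,T];H^2)$ once $\mathbf u^{k}\in L^1(0,T;H^3)$. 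For the velocity and magnetic field the essential analytic input is the elliptic regularity estimate for the Lam\'e operator $\mu\Delta+(\mu+\lambda)\nabla\divv$ and for $\Delta$ under the slip boundary conditions, together with the div--curl bound $\|\nabla^2\mathbf f\|_{L^2}\le C(\|\nabla\divv\mathbf f\|_{L^2}+\|\nabla\curl\mathbf f\|_{L^2})$ valid for $\mathbf f\in H_\omega^{2}$; here the convexity of $\Omega$ guarantees the favourable sign of the resulting curvature boundary term. Parabolic smoothing then gives bounds $\mathbf u^{k},\mathbf B^{k}\in C([0,T];H^2)\cap L^2(0,T;H^3)$ on an interval $[0,T]$ independent of $k$, closing the iteration. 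I would then show the sequence of differences is Cauchy in the lower norm $L^\infty(0,T;L^2)\cap L^2(0,T;H^1)$ for $T$ small, pass to the limit, and recover $H^2$ regularity by interpolation and weak lower semicontinuity; uniqueness follows from the identical difference estimates via Gronwall.

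For the blow-up criterion I would argue by contradiction: suppose $T^*<\infty$ yet $\limsup_{T\nearrow T^*}\|\rho\|_{L^\infty(0,T;L^\infty)}=K<\infty$. The goal is to prove that the full $H^2$ norm of $(\rho,\mathbf u,\mathbf B)$ together with a strictly positive density lower bound persists up to $T^*$, so that the local existence part of this lemma, applied at a time close to $T^*$, continues the solution beyond $T^*$ and contradicts maximality. The required chain of conditional estimates starts from the basic energy law, proceeds to the $L^\infty_tL^2$ bound on $(\nabla\mathbf u,\nabla\mathbf B)$ obtained by testing $\eqref{a1}_2$ and $\eqref{a1}_3$ against $\mathbf u_t$ and $\mathbf B_t$ and exploiting the effective viscous flux $F$ to absorb the pressure, then to bounds on $\|\sqrt\rho\dot{\mathbf u}\|_{L^2}$ and $\|\mathbf B_t\|_{L^2}$, and finally to the $H^2$ estimates through elliptic regularity. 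The density lower bound is preserved because the characteristic formula gives $\rho\ge(\inf_\Omega\rho_0)\exp\big(-\int_0^{T^*}\|\divv\mathbf u\|_{L^\infty}\,\mathrm{d}t\big)$.

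The main obstacle will be closing these density-conditional estimates, specifically controlling $\int_0^{T^*}\|\divv\mathbf u\|_{L^\infty}\,\mathrm{d}t$ and $\|\nabla\mathbf u\|_{L^\infty}$ from only the $L^\infty$ bound on $\rho$. This forces the use of a logarithmic (Brezis--Gallouet/Beale--Kato--Majda-type) interpolation inequality to dominate $\|\nabla\mathbf u\|_{L^\infty}$ by $\|\nabla\mathbf u\|_{H^1}$ up to a logarithmic correction, combined with a careful treatment of the magnetic coupling $\mathbf B\cdot\nabla\mathbf B-\tfrac12\nabla|\mathbf B|^2$ and of the surface integrals generated by the slip boundary conditions, whose signs must again be controlled by the convexity of $\Omega$. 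I expect this density-to-everything bootstrap, rather than the linear iteration itself, to be the delicate part of the argument.
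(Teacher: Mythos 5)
The paper does not prove this lemma at all: it is quoted verbatim (with ``minor modification'') from Du--Wang \cite{DW15}, so there is no in-paper argument to compare yours against. That said, your two-stage architecture --- a linearize-and-iterate scheme for local well-posedness with the density propagated along characteristics, followed by a contradiction/continuation argument in which a finite bound on $\sup\rho$ is bootstrapped into full $H^2$ control --- is exactly the standard route by which such ``mass concentration'' blow-up criteria are established in \cite{DW15} and in the Huang--Li--Xin line of work, adapted here to the Navier-slip setting via the div--curl estimate and the sign of the second fundamental form on a convex boundary. Your identification of the delicate point is also accurate: the only step that is not routine is closing the density-conditional estimates, i.e.\ obtaining the $L^\infty_t L^2$ bound on $(\nabla\mathbf{u},\nabla\mathbf{B})$ and then $\int_0^{T^*}\|\nabla\mathbf{u}\|_{L^\infty}\,\mathrm{d}t<\infty$ from $\|\rho\|_{L^\infty}$ alone, which in 2D requires precisely the Desjardins-type logarithmic interpolation inequality (the paper's Lemma \ref{log}) together with the boundary identity \eqref{2.12} to tame the surface integrals generated by \eqref{a3}. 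Your proposal names these ingredients but does not execute them, so it should be read as a correct and well-targeted outline rather than a complete proof; since the paper itself delegates the entire proof to \cite{DW15}, this level of detail is not in tension with anything the authors do.
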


The following Gagliardo--Nirenberg inequality (see \cite[Remark 2.1]{LWZ}) will be used frequently later.
\begin{lemma}\label{GN}
(Gagliardo--Nirenberg inequality, special case).
Assume that $\Omega$ is a bounded Lipschitz domain in $\mathbb{R}^2$.
For $p\in [2, \infty)$, $q\in(1, \infty)$, and $r\in (2, \infty)$, there exist generic constants $C_i>0\ (i\in\{1,2,3,4\})$ which may depend only on $p$, $q$, $r$, and $\Omega$ such that, for $f\in H^1$ and $g\in L^q$ with $\nabla g\in L^{r}$,
\begin{gather*}
\|f\|_{L^p}\leq C_1\|f\|_{L^2}^\frac{2}{p}\|\nabla f\|_{L^2}^{1-\frac{2}{p}}+C_2\|f\|_{L^2},\\
\|g\|_{L^\infty}\leq C_3\|g\|_{L^q}^\frac{q(r-2)}{2r+q(r-2)}\|\nabla g\|_{L^r}^\frac{2r}{2r+q(r-2)}+C_4\|g\|_{L^2}.
\end{gather*}
Moreover, if $\int_{\Omega}f(\mathbf{x})\mathrm{d}\mathbf{x}=0$ or $(f\cdot n)|_{\partial\Omega}=0$, we can choose $C_2=0$. Similarly, the constant $C_4=0$ provided $\int_{\Omega}g(\mathbf{x})\mathrm{d}\mathbf{x}=0$ or $(g\cdot n)|_{\partial\Omega}=0$.
\end{lemma}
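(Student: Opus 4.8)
The plan is to reduce both estimates to their classical whole-plane counterparts through a bounded Sobolev extension operator, and then to remove the lower-order terms under the stated hypotheses by means of Poincar\'e-type inequalities. Since $\Omega$ is a bounded Lipschitz domain, I would first fix a bounded linear extension operator $E\colon W^{1,s}(\Omega)\to W^{1,s}(\mathbb{R}^2)$, simultaneously bounded for every $1\le s<\infty$ (a Stein-type extension), satisfying $\|Ef\|_{L^s(\mathbb{R}^2)}\le C\|f\|_{L^s(\Omega)}$ and $\|\nabla Ef\|_{L^s(\mathbb{R}^2)}\le C\|f\|_{W^{1,s}(\Omega)}$ with $C=C(s,\Omega)$. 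Because restriction to $\Omega$ does not increase any norm, it suffices to prove the corresponding bounds for the extensions on all of $\mathbb{R}^2$, where the scale-invariant Gagliardo--Nirenberg inequalities are available.

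For the first estimate I would apply, with $h=Ef$, the whole-plane inequality
\[
\|h\|_{L^p(\mathbb{R}^2)}\le C\|h\|_{L^2(\mathbb{R}^2)}^{2/p}\|\nabla h\|_{L^2(\mathbb{R}^2)}^{1-2/p},\qquad p\in[2,\infty),
\]
whose homogeneity is confirmed by the dilation $h\mapsto h(\lambda\,\cdot)$. Transferring back through the extension bounds gives
\[
\|f\|_{L^p(\Omega)}\le C\|f\|_{L^2}^{2/p}\bigl(\|\nabla f\|_{L^2}+\|f\|_{L^2}\bigr)^{1-2/p},
\]
and since $1-2/p\in[0,1)$ the subadditivity $(a+b)^{\theta}\le a^{\theta}+b^{\theta}$ for $\theta\in[0,1]$ splits the right-hand side precisely into $C_1\|f\|_{L^2}^{2/p}\|\nabla f\|_{L^2}^{1-2/p}+C_2\|f\|_{L^2}$, as claimed.

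For the second estimate I would use, with $h=Eg$, the embedding $W^{1,r}(\mathbb{R}^2)\hookrightarrow L^\infty$ valid for $r>2$ in the interpolated form
\[
\|h\|_{L^\infty(\mathbb{R}^2)}\le C\|h\|_{L^q(\mathbb{R}^2)}^{\frac{q(r-2)}{2r+q(r-2)}}\|\nabla h\|_{L^r(\mathbb{R}^2)}^{\frac{2r}{2r+q(r-2)}},
\]
the two exponents being exactly those forced by the scaling balance $a\bigl(1-\tfrac{2}{r}\bigr)=(1-a)\tfrac{2}{q}$, which solves to $a=\frac{2r}{2r+q(r-2)}$. Transferring back and isolating the excess produced by the extension as before yields the leading term together with a genuinely lower-order remainder; invoking the standard flexibility that the lower-order term in a bounded-domain Gagliardo--Nirenberg inequality may be taken in any fixed $L^\sigma$, here $\sigma=2$, puts this remainder in the form $C_4\|g\|_{L^2}$.

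It remains to justify the vanishing of the constants. If $\int_\Omega f=0$ (scalar case) the Poincar\'e--Wirtinger inequality gives $\|f\|_{L^2}\le C\|\nabla f\|_{L^2}$, whereas if $(f\cdot n)|_{\partial\Omega}=0$ the same bound follows from a Rellich--Kondrachov compactness argument, using that a constant field tangent to $\partial\Omega$ everywhere must vanish on a bounded domain; in either situation I would write $C_2\|f\|_{L^2}=C_2\|f\|_{L^2}^{2/p}\|f\|_{L^2}^{1-2/p}\le C\|f\|_{L^2}^{2/p}\|\nabla f\|_{L^2}^{1-2/p}$, absorbing the remainder into the leading term so that $C_2=0$ is admissible, and symmetrically for $C_4$. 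I expect the main obstacle to be the norm bookkeeping in the transfer step---ensuring that the leading term retains only $\|\nabla f\|_{L^2}$ (respectively $\|\nabla g\|_{L^r}$) while all the extension-induced excess is confined to a term controlled by $\|f\|_{L^2}$ (respectively $\|g\|_{L^2}$)---together with the verification that the Poincar\'e inequality is genuinely available under the boundary condition $(f\cdot n)|_{\partial\Omega}=0$, which is where the geometry of $\Omega$ enters.
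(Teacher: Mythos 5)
The paper does not actually prove this lemma: it is quoted verbatim from \cite[Remark 2.1]{LWZ}, so there is no in-paper argument to compare against. Your proposal is a correct and essentially standard derivation: Stein extension to $\mathbb{R}^2$, the scale-invariant whole-plane Gagliardo--Nirenberg inequalities (your exponent bookkeeping $a=\frac{2r}{2r+q(r-2)}$ is right), subadditivity of $t\mapsto t^{\theta}$ to split off the extension-induced excess, and Poincar\'e (mean-zero or, via Rellich compactness and the observation that a nonzero constant field on a bounded domain cannot be tangent to $\partial\Omega$ a.e., the normal-trace version, which is exactly the paper's Lemma \ref{PO}) to absorb the lower-order term and take $C_2=C_4=0$.

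The only place you should write out in full is the remainder in the $L^\infty$ estimate: after subadditivity the excess is $C\|g\|_{L^q}^{1-a}\|g\|_{L^r}^{a}$, not yet $C\|g\|_{L^2}$, and appealing to ``the standard flexibility that the lower-order term may be taken in any $L^\sigma$'' is close to assuming the conclusion. The honest route is to interpolate $\|g\|_{L^r}\le\|g\|_{L^2}^{2/r}\|g\|_{L^\infty}^{1-2/r}$ (and similarly for $\|g\|_{L^q}$ when $q\ge 2$, or use $\|g\|_{L^q}\le C\|g\|_{L^2}$ when $q<2$); the scaling identity $(1-a)/q+a/r=a/2$ then shows the excess is bounded by $C\|g\|_{L^2}^{a}\|g\|_{L^\infty}^{1-a}$, and Young's inequality lets you absorb $\tfrac12\|g\|_{L^\infty}$ into the left-hand side (finiteness of $\|g\|_{L^\infty}$ being guaranteed a priori by $W^{1,r}\hookrightarrow L^\infty$ for $r>2$ after a short bootstrap from $g\in L^q$, $\nabla g\in L^r$). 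With that step supplied, the argument is complete.
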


Next, we present a generalized Poincar{\'e}'s inequality (cf. \cite[Lemma 8]{BS2012}).
\begin{lemma}\label{PO}
Let $\Omega\subset\mathbb{R}^2$ be a bounded Lipschitz domain. Then, for $1<p<\infty$, there exists a positive constant $C$ depending only on $p$ and $\Omega$ such that
\begin{equation*}
\|f\|_{L^p} \leq  C\|\nabla f\|_{L^p},
\end{equation*}
for each vector field $f\in W^{1,p}(\Omega)$ satisfying either $\int_{\Omega}f(\mathbf{x})\mathrm{d}\mathbf{x}=0$ or $(f\cdot n)|_{\partial\Omega}=0$.
\end{lemma}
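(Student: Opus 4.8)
The plan is to argue by contradiction through the standard compactness (Rellich--Kondrachov) method, which handles both side conditions within a single framework. Suppose the claimed inequality failed for every choice of constant; then I could extract a sequence $\{f_k\}\subset W^{1,p}(\Omega)$, each $f_k$ satisfying the relevant constraint (either $\int_\Omega f_k\,\mathrm{d}\mathbf{x}=0$ or $(f_k\cdot\mathbf{n})|_{\partial\Omega}=0$), normalized so that $\|f_k\|_{L^p}=1$ while $\|\nabla f_k\|_{L^p}\le 1/k$. In particular $\{f_k\}$ would be bounded in $W^{1,p}(\Omega)$.

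Since $\Omega$ is a bounded Lipschitz domain, the compact embedding $W^{1,p}(\Omega)\hookrightarrow\hookrightarrow L^p(\Omega)$ furnishes a subsequence with $f_k\to f$ strongly in $L^p(\Omega)$. As $\|\nabla f_k\|_{L^p}\to 0$, the distributional gradient of the limit vanishes, $\nabla f=0$, so by connectedness of $\Omega$ the limit is a constant (vector) $\mathbf{c}$, and in fact $f_k\to\mathbf{c}$ strongly in $W^{1,p}(\Omega)$. Passing to the limit in the normalization yields $\|\mathbf{c}\|_{L^p}=1$, hence $\mathbf{c}\neq 0$; the whole point is then to show that either side condition forces $\mathbf{c}=0$, producing a contradiction.

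For the mean-zero constraint this is immediate: $L^p$ convergence gives $|\Omega|\,\mathbf{c}=\int_\Omega\mathbf{c}\,\mathrm{d}\mathbf{x}=\lim_k\int_\Omega f_k\,\mathrm{d}\mathbf{x}=0$, so $\mathbf{c}=0$. For the normal-trace constraint I would use the continuity of the trace operator $W^{1,p}(\Omega)\to L^p(\partial\Omega)$ together with the strong $W^{1,p}$ convergence to pass the boundary condition to the limit, obtaining $(\mathbf{c}\cdot\mathbf{n})|_{\partial\Omega}=0$; testing with the linear function $g(\mathbf{x})=\mathbf{c}\cdot\mathbf{x}$ and integrating by parts then gives $|\mathbf{c}|^2|\Omega|=\int_\Omega\divv(g\mathbf{c})\,\mathrm{d}\mathbf{x}=\int_{\partial\Omega}g\,(\mathbf{c}\cdot\mathbf{n})\,\mathrm{d}s=0$, so again $\mathbf{c}=0$.

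The hard part is the normal-trace case: one must confirm that the boundary constraint genuinely survives the limiting procedure, which is precisely where the boundedness of the trace map and the strong $W^{1,p}$ convergence enter, and then exploit the geometry of a bounded domain (through the divergence-theorem identity above) to conclude that a globally tangential constant field must vanish. This is also what the Lipschitz hypothesis buys us, since it guarantees both the compact Sobolev embedding and a well-defined bounded trace; the mean-zero case, by comparison, is routine once the compactness scheme is in place.
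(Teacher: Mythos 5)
Your argument is correct. Note first that the paper does not actually prove this lemma: it is quoted verbatim from the literature (the citation \cite[Lemma 8]{BS2012}), so there is no in-text proof to compare against; what you have supplied is the standard self-contained compactness proof, and it goes through. The contradiction setup, the use of Rellich--Kondrachov (valid on a bounded Lipschitz domain, which is an extension domain), the identification of the limit as a constant vector $\mathbf{c}$ with $\|\mathbf{c}\|_{L^p}=1$, and the two mechanisms forcing $\mathbf{c}=0$ are all sound. In particular, the step you flag as the hard part is handled correctly: the trace operator $W^{1,p}(\Omega)\to L^p(\partial\Omega)$ is bounded on a Lipschitz domain and $\mathbf{n}\in L^\infty(\partial\Omega)$, so the constraint $(f_k\cdot\mathbf{n})|_{\partial\Omega}=0$ passes to the limit along the strong $W^{1,p}$ convergence, and the divergence-theorem identity $|\mathbf{c}|^2|\Omega|=\int_{\partial\Omega}(\mathbf{c}\cdot\mathbf{x})(\mathbf{c}\cdot\mathbf{n})\,\mathrm{d}s=0$ correctly rules out a nonzero constant field tangent to the whole boundary of a bounded domain. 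One cosmetic remark: the two side conditions should be treated as two separate inequalities (one contradiction argument for each class of $f$, then take the larger constant), which is implicitly what you do; and, as is inherent to the compactness method, the constant $C(p,\Omega)$ is non-explicit, but the lemma claims nothing more.
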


Moreover, the following Desjardins-type logarithmic interpolation inequality in 2D bounded convex domains has been proven in \cite[Lemma 2.4]{WWZ2}, which extends the torus $\mathbb{T}^2$ case treated in \cite[Lemma 2]{DE97} (see also \cite[Lemma 1]{D1997}).
\begin{lemma}\label{log}
Let $\Omega\subset\mathbb{R}^2$ be a bounded convex domain with smooth boundary.
Assume that $0\le\rho\le\hat{\rho}$ and $\mathbf{u}\in H^1(\Omega)$, then
\begin{equation*}
\|\sqrt\rho\mathbf{u}\|_{L^4}^2\leq C(\hat{\rho},\Omega)(1+\|\sqrt\rho\mathbf{u}\|_{L^2})\|\nabla\mathbf{u}\|_{L^2}
\sqrt{\ln\left(2+\|\nabla\mathbf{u}\|_{L^2}^2\right)},
\end{equation*}
where and in what follows we sometimes write $C(f)$ to emphasize the dependence on $f$.
\end{lemma}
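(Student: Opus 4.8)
The plan is to bound the weighted quartic norm by splitting $\mathbf{u}$ into a ``low-frequency'' and a ``high-frequency'' part at a scale $1/N$ (with $N>0$ to be optimized at the end), mimicking the torus argument of \cite{DE97} but replacing the Fourier truncation, unavailable on $\Omega$, by an intrinsic smoothing. Concretely, I would set $\mathbf{u}_N \triangleq e^{N^{-2}\Delta}\mathbf{u}$ using the Neumann heat semigroup on $\Omega$ and write $\mathbf{u}=\mathbf{u}_N+(\mathbf{u}-\mathbf{u}_N)$. The starting point is the elementary bound $\|\sqrt\rho\mathbf{u}\|_{L^4}^2\le 2\|\sqrt\rho\mathbf{u}_N\|_{L^4}^2+2\|\sqrt\rho(\mathbf{u}-\mathbf{u}_N)\|_{L^4}^2$ together with $0\le\rho\le\hat\rho$, after which the two pieces are treated by completely different mechanisms.

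For the low-frequency piece I would use the interpolation $\|\sqrt\rho\mathbf{u}_N\|_{L^4}^2\le\sqrt{\hat\rho}\,\|\mathbf{u}_N\|_{L^\infty}\|\sqrt\rho\mathbf{u}_N\|_{L^2}$, which is designed so that the weighted factor $\|\sqrt\rho\mathbf{u}_N\|_{L^2}$ (controlled by $\|\sqrt\rho\mathbf{u}\|_{L^2}$ up to lower-order terms) survives, while the genuinely two-dimensional logarithm is produced by a Brezis--Gallouet-type estimate $\|\mathbf{u}_N\|_{L^\infty}\le C\sqrt{\ln(2+N)}\,\|\mathbf{u}\|_{H^1}$ reflecting the borderline failure of $H^1\hookrightarrow L^\infty$ in dimension two. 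For the high-frequency piece I would discard the weight, $\|\sqrt\rho(\mathbf{u}-\mathbf{u}_N)\|_{L^4}^2\le\sqrt{\hat\rho}\,\|\mathbf{u}-\mathbf{u}_N\|_{L^4}^2$, apply the Ladyzhenskaya inequality (the first estimate in Lemma \ref{GN} with $p=4$), and then exploit the spectral smallness $\|\mathbf{u}-\mathbf{u}_N\|_{L^2}\le CN^{-1}\|\nabla\mathbf{u}\|_{L^2}$ (which follows from $1-e^{-s}\le\sqrt{s}$ applied to the spectral resolution of $-\Delta$) together with the contraction $\|\nabla(\mathbf{u}-\mathbf{u}_N)\|_{L^2}\le\|\nabla\mathbf{u}\|_{L^2}$, to obtain the gain $\|\sqrt\rho(\mathbf{u}-\mathbf{u}_N)\|_{L^4}^2\le CN^{-1}\|\nabla\mathbf{u}\|_{L^2}^2$. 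Collecting the two estimates and choosing $N\sim 2+\|\nabla\mathbf{u}\|_{L^2}$ balances the term $N^{-1}\|\nabla\mathbf{u}\|_{L^2}^2$ against the logarithmic one and reproduces exactly the factor $\|\nabla\mathbf{u}\|_{L^2}\sqrt{\ln(2+\|\nabla\mathbf{u}\|_{L^2}^2)}$; the residual $\|\mathbf{u}\|_{L^2}$ contributions coming from the $H^1$-norm and the constant mode are then absorbed into $\|\nabla\mathbf{u}\|_{L^2}$ by the generalized Poincar\'e inequality of Lemma \ref{PO}, available because the inequality is applied to no-penetration velocity fields.

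The delicate point, and the reason the passage from $\mathbb{T}^2$ to a bounded domain is nontrivial, is the simultaneous validity on $\Omega$ of the two quantitative properties of the smoothing $\mathbf{u}\mapsto\mathbf{u}_N$: the logarithmic $L^\infty$ bound for $\mathbf{u}_N$ and the $O(N^{-1})$ $L^2$-smallness of $\mathbf{u}-\mathbf{u}_N$, both with constants depending only on $\hat\rho$ and $\Omega$ and uniform in $N$. This is precisely where convexity of $\Omega$ is essential: it guarantees the Neumann heat-semigroup gradient contraction $\|\nabla e^{t\Delta}f\|_{L^2}\le\|\nabla f\|_{L^2}$ and the associated heat-kernel bounds, whereas on a general domain the positive boundary curvature terms destroy this commutation. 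I expect the main obstacle to be the derivation of the logarithmic $L^\infty$ estimate for $\mathbf{u}_N$ intrinsically on $\Omega$, i.e.\ controlling the low-frequency part in $L^\infty$ with the sharp $\sqrt{\ln}$ loss in the absence of an explicit Fourier basis; once this estimate is secured, the remaining interpolation and the optimization in $N$ are routine.
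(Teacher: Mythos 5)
First, a point of reference: the paper does not prove Lemma \ref{log} at all --- it is quoted verbatim from \cite[Lemma 2.4]{WWZ2}, which in turn adapts the torus argument of \cite{DE97,D1997}. So there is no in-paper proof to compare against, and your proposal has to be judged on its own terms. Your overall architecture (split $\mathbf{u}$ at a frequency scale $N$, keep the weight $\sqrt\rho$ only on the low part via $\|\sqrt\rho\mathbf{u}_N\|_{L^4}^2\le\sqrt{\hat\rho}\,\|\mathbf{u}_N\|_{L^\infty}\|\sqrt\rho\mathbf{u}_N\|_{L^2}$, discard it on the high part, and optimize $N\sim 2+\|\nabla\mathbf{u}\|_{L^2}$) is exactly Desjardins' torus scheme, and replacing the Fourier cut-off by the Neumann heat semigroup is a legitimate surrogate: the mean is conserved, $\|\nabla e^{t\Delta}f\|_{L^2}\le\|\nabla f\|_{L^2}$, and $\|f-e^{t\Delta}f\|_{L^2}\le\sqrt{t}\,\|\nabla f\|_{L^2}$ all follow from the spectral theorem for the Neumann Laplacian on any bounded Lipschitz domain. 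Note, however, that this means convexity plays no role in the two properties you single out; your diagnosis of where convexity is ``essential'' is misplaced.

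There remain two substantive issues. The first is that the estimate $\|\mathbf{u}_N\|_{L^\infty}\le C\sqrt{\ln(2+N)}\,\|\mathbf{u}\|_{H^1}$, which you yourself identify as the main obstacle, is precisely the entire analytic content of the lemma and is left unproved; it can be supplied (spectral calculus gives $\|\Delta\mathbf{u}_N\|_{L^2}\le CN\|\nabla\mathbf{u}\|_{L^2}$, Neumann elliptic regularity on the smooth domain upgrades this to $\|\mathbf{u}_N\|_{H^2}\le CN\|\mathbf{u}\|_{H^1}$, and the Brezis--Gallou\"et inequality then yields the $\sqrt{\ln}$ loss), but as written the proposal replaces the heart of the proof by an assertion. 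The second is the constant mode: your last step absorbs $\|\mathbf{u}\|_{L^2}$ into $\|\nabla\mathbf{u}\|_{L^2}$ via Lemma \ref{PO}, which requires $(\mathbf{u}\cdot\mathbf{n})|_{\partial\Omega}=0$ or a zero-mean normalization --- a hypothesis not present in the statement of Lemma \ref{log} (as stated, the inequality fails for a nonzero constant $\mathbf{u}$ with $\rho\equiv\hat\rho$, since the right-hand side vanishes). This normalization does hold in every application in the paper and is presumably assumed in \cite{WWZ2}, but your proof must make it an explicit hypothesis rather than import it silently from context. For comparison, the route in \cite{WWZ2,D1997} is genuinely different: it starts from the potential representation $|\mathbf{u}(x)-\bar{\mathbf{u}}|\le C(\Omega)\int_\Omega|\nabla\mathbf{u}(y)|\,|x-y|^{-1}\mathrm{d}y$, which is where convexity actually enters, and extracts the logarithm by splitting that Riesz potential at an optimized radius; your semigroup argument, once the Brezis--Gallou\"et step is carried out, would give an alternative proof that trades convexity for smoothness of $\partial\Omega$.
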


In addition, the following Hodge-type decomposition is given in \cite{WW92,PL96}.
\begin{lemma}\label{Hodge}
Let $1<q<\infty$ and $\Omega$ be a simply connected bounded domain in $\mathbb{R}^2$ with Lipschitz boundary $\partial\Omega$ (e.g., a bounded convex domain). For $\mathbf{u}\in W^{1,q}$ satisfying $(\mathbf{u}\cdot\mathbf{n})|_{\partial\Omega}=0$, there exists a constant $C=C(q, \Omega)>0$ such that
\begin{equation*}
    \|\nabla \mathbf{u}\|_{L^q}\leq C(\|\divv \mathbf{u}\|_{L^q}+\|\curl \mathbf{u}\|_{L^q}).
\end{equation*}
\end{lemma}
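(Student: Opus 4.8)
The plan is to reduce the estimate to standard $W^{2,q}$ elliptic regularity by means of a Helmholtz--Hodge decomposition of $\mathbf{u}$ into a curl-free part and a divergence-free part, exploiting that $\Omega$ is simply connected and that $\mathbf{u}$ carries the no-penetration condition $(\mathbf{u}\cdot\mathbf{n})|_{\partial\Omega}=0$. (An equivalent route would be to view the div-curl system with the normal boundary condition as an Agmon--Douglis--Nirenberg elliptic problem and invoke its a priori estimates, but the decomposition is more transparent.)

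First I would isolate the divergence by solving the Neumann problem
\begin{equation*}
\Delta\phi=\divv\mathbf{u}\ \text{ in }\Omega,\qquad \partial_{\mathbf{n}}\phi=0\ \text{ on }\partial\Omega.
\end{equation*}
The compatibility condition $\int_\Omega\divv\mathbf{u}\,\mathrm{d}\mathbf{x}=\int_{\partial\Omega}\mathbf{u}\cdot\mathbf{n}\,\mathrm{d}s=0$ holds precisely because $(\mathbf{u}\cdot\mathbf{n})|_{\partial\Omega}=0$, so $\phi$ exists and is unique up to a constant; elliptic regularity for the Neumann Laplacian then gives $\|\nabla^2\phi\|_{L^q}\le C\|\divv\mathbf{u}\|_{L^q}$.

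Next I set $\mathbf{w}\triangleq\mathbf{u}-\nabla\phi$, which satisfies $\divv\mathbf{w}=\divv\mathbf{u}-\Delta\phi=0$, $\curl\mathbf{w}=\curl\mathbf{u}$, and $(\mathbf{w}\cdot\mathbf{n})|_{\partial\Omega}=(\mathbf{u}\cdot\mathbf{n})|_{\partial\Omega}-\partial_{\mathbf{n}}\phi|_{\partial\Omega}=0$. Since $\Omega$ is simply connected and $\mathbf{w}$ is divergence-free with vanishing normal trace, there exists a stream function $\psi$ with $\mathbf{w}=\nabla^{\bot}\psi\triangleq(-\partial_2\psi,\partial_1\psi)$; the relation $\mathbf{w}\cdot\mathbf{n}=0$ forces $\psi$ to be constant along the (connected) boundary curve, and I normalize $\psi|_{\partial\Omega}=0$. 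A direct computation gives $\Delta\psi=\curl\mathbf{w}=\curl\mathbf{u}$, so $\psi$ solves the Dirichlet problem with datum $\curl\mathbf{u}$, and elliptic regularity yields $\|\nabla^2\psi\|_{L^q}\le C\|\curl\mathbf{u}\|_{L^q}$. Combining the two bounds through $\nabla\mathbf{u}=\nabla^2\phi+\nabla(\nabla^{\bot}\psi)$ produces
\begin{equation*}
\|\nabla\mathbf{u}\|_{L^q}\le\|\nabla^2\phi\|_{L^q}+C\|\nabla^2\psi\|_{L^q}\le C\big(\|\divv\mathbf{u}\|_{L^q}+\|\curl\mathbf{u}\|_{L^q}\big),
\end{equation*}
which is the claim.

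The main obstacle I anticipate is the $W^{2,q}$ (Calder\'on--Zygmund) regularity of the Neumann and Dirichlet Laplacian over the full range $1<q<\infty$ when $\partial\Omega$ is merely Lipschitz: such second-derivative estimates can fail on general Lipschitz domains, and it is exactly the convexity of $\Omega$ that rescues them. This quantitative step is the heart of the argument and is precisely what the cited works \cite{WW92,PL96} provide; the decomposition and the verification of boundary conditions above are then routine bookkeeping.
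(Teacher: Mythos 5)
Your proof is correct and is precisely the standard argument underlying the references \cite{WW92,PL96} that the paper cites in lieu of a proof: a Neumann problem absorbs $\divv\mathbf{u}$, a Dirichlet problem for the stream function of the divergence-free remainder absorbs $\curl\mathbf{u}$, and $W^{2,q}$ elliptic regularity closes the estimate. You are also right to single out the Calder\'on--Zygmund regularity as the crux --- for merely Lipschitz $\partial\Omega$ (and even for general convex domains with corners) the full range $1<q<\infty$ can fail, but in the paper's application $\Omega$ is convex with smooth boundary, so the elliptic estimates hold and the argument is complete.
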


Finally, we recall the following commutator estimates in \cite[Lemma 4.3]{F04}, which play an important role in the mollifier arguments.
\begin{lemma}\label{lcom}
Let $\Omega\subset \mathbb{R}^2$ be a domain.
Let $\rho \in L^{p}(\Omega)$ and ${\bf u} \in W^{1,q}(\Omega)$ be given functions such that $1\le p,q<\infty$ and $\frac{1}{p}+\frac{1}{q}\le 1$. For any $\epsilon>0$, then we have
\begin{equation*}
\|\divv[\rho{\bf u}]_\epsilon-\divv\left([\rho]_\epsilon{\bf u}\right)\|_{L^1(K)}\le C(K)\|\rho\|_{L^p(\Omega)}\|{\bf u}\|_{W^{1,q}(\Omega)},
\end{equation*}
and
\begin{equation*}
\divv[\rho{\bf u}]_\epsilon-\divv\left([\rho]_\epsilon{\bf u}\right)\rightarrow 0\ \text{ in}\ \ L^1(K)\ \ \text{as}\ \epsilon\rightarrow0
\end{equation*}
for any compact set $K\subset \Omega$.
\end{lemma}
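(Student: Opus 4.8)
The plan is to reduce the statement to the classical Friedrichs/DiPerna--Lions commutator identity and then split the argument into a uniform bound and a convergence statement. Fix a standard mollifier $\eta\in C_c^\infty(\mathbb{R}^2)$ with $\int\eta=1$, set $\eta_\epsilon(\mathbf{x})=\epsilon^{-2}\eta(\mathbf{x}/\epsilon)$, and write $[f]_\epsilon=f*\eta_\epsilon$. Since $K\subset\Omega$ is compact, for every $\epsilon<\mathrm{dist}(K,\partial\Omega)$ all convolutions below only sample $\rho,\mathbf{u}$ on a fixed compact neighborhood of $K$ inside $\Omega$, so after a bounded extension I may treat both functions as defined on $\mathbb{R}^2$ without affecting the local quantities. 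Putting $r_\epsilon\triangleq\divv[\rho\mathbf{u}]_\epsilon-\divv([\rho]_\epsilon\mathbf{u})$ and using that convolution commutes with $\partial_{x_i}$ together with $\divv([\rho]_\epsilon\mathbf{u})=\nabla[\rho]_\epsilon\cdot\mathbf{u}+[\rho]_\epsilon\divv\mathbf{u}$, I would first establish the pointwise identity
\[
r_\epsilon(\mathbf{x})=\int_{\mathbb{R}^2}\rho(\mathbf{y})\big(\mathbf{u}(\mathbf{y})-\mathbf{u}(\mathbf{x})\big)\cdot\nabla\eta_\epsilon(\mathbf{x}-\mathbf{y})\,\mathrm{d}\mathbf{y}-[\rho]_\epsilon(\mathbf{x})\,\divv\mathbf{u}(\mathbf{x}),
\]
and then, via the substitution $\mathbf{y}=\mathbf{x}-\epsilon\mathbf{z}$, rewrite the leading term in the working form
\[
\int_{\mathbb{R}^2}\rho(\mathbf{x}-\epsilon\mathbf{z})\,\frac{\mathbf{u}(\mathbf{x}-\epsilon\mathbf{z})-\mathbf{u}(\mathbf{x})}{\epsilon}\cdot\nabla\eta(\mathbf{z})\,\mathrm{d}\mathbf{z}.
\]

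For the uniform bound I would treat the two pieces of $r_\epsilon$ separately. The lower-order term is immediate from Young's convolution inequality and H\"older's inequality, $\|[\rho]_\epsilon\divv\mathbf{u}\|_{L^1(K)}\le\|[\rho]_\epsilon\|_{L^p}\|\divv\mathbf{u}\|_{L^q}\le\|\rho\|_{L^p}\|\mathbf{u}\|_{W^{1,q}}$, where the exponent condition $\frac1p+\frac1q\le1$ is exactly what makes H\"older applicable on the bounded set $K$. For the leading term I would invoke the elementary difference-quotient estimate $\|\mathbf{u}(\cdot-\epsilon\mathbf{z})-\mathbf{u}(\cdot)\|_{L^q}\le\epsilon|\mathbf{z}|\,\|\nabla\mathbf{u}\|_{L^q}$, which follows from the fundamental theorem of calculus along segments valid for $W^{1,q}$ functions; combined with Minkowski's integral inequality and H\"older's inequality this yields
\[
\Big\|\int_{\mathbb{R}^2}\rho(\cdot-\epsilon\mathbf{z})\,\tfrac{\mathbf{u}(\cdot-\epsilon\mathbf{z})-\mathbf{u}(\cdot)}{\epsilon}\cdot\nabla\eta(\mathbf{z})\,\mathrm{d}\mathbf{z}\Big\|_{L^1(K)}\le\|\rho\|_{L^p}\|\nabla\mathbf{u}\|_{L^q}\int_{\mathbb{R}^2}|\mathbf{z}|\,|\nabla\eta(\mathbf{z})|\,\mathrm{d}\mathbf{z},
\]
and the last integral is finite because $\eta$ is compactly supported. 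Summing the two contributions gives the claimed bound with a constant $C(K)$ uniform in $\epsilon$.

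For the convergence I would show that \emph{both} pieces of $r_\epsilon$ tend to the same limit $\rho\,\divv\mathbf{u}$ in $L^1(K)$, so that $r_\epsilon\to0$. The lower-order piece converges since $[\rho]_\epsilon\to\rho$ in $L^p$ while $\divv\mathbf{u}\in L^q$ is fixed. For the leading piece the key point is that continuity of translation applied to $\nabla\mathbf{u}\in L^q$ gives $\frac{\mathbf{u}(\cdot-\epsilon\mathbf{z})-\mathbf{u}(\cdot)}{\epsilon}\to-(\mathbf{z}\cdot\nabla)\mathbf{u}$ in $L^q$, uniformly for $\mathbf{z}$ in the support of $\eta$, while $\rho(\cdot-\epsilon\mathbf{z})\to\rho$ in $L^p$; hence the $\mathbf{z}$-integrand converges in $L^1$, and dominated convergence in $\mathbf{z}$ (with integrable majorant $|\mathbf{z}|\,|\nabla\eta(\mathbf{z})|\,\|\nabla\mathbf{u}\|_{L^q}\|\rho\|_{L^p}$) sends the leading term to $-\int_{\mathbb{R}^2}\rho(\mathbf{x})(\mathbf{z}\cdot\nabla)\mathbf{u}(\mathbf{x})\cdot\nabla\eta(\mathbf{z})\,\mathrm{d}\mathbf{z}$. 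Evaluating the moment $\int z_j\partial_i\eta(\mathbf{z})\,\mathrm{d}\mathbf{z}=-\delta_{ij}$ (integration by parts using $\int\eta=1$) collapses this to precisely $\rho\,\divv\mathbf{u}$, which cancels the lower-order limit. The main obstacle, and the only genuinely analytic step, will be justifying this interchange of limits in the leading term, since it requires passing $\epsilon\to0$ inside the $\mathbf{z}$-integral while simultaneously controlling the product of two factors each converging only strongly in a \emph{dual} pair of Lebesgue spaces; the uniform bound from the previous step, the strong $L^p$/$L^q$ convergences, and the compact support of $\eta$ are exactly what make this rigorous, and the restriction to compact $K$ (with constant $C(K)$) is what keeps every convolution supported inside $\Omega$.
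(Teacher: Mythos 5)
Your argument is correct and coincides with the classical Friedrichs/DiPerna--Lions commutator proof: the paper does not prove this lemma itself but quotes it from \cite[Lemma 4.3]{F04}, and the proof there rests on exactly your decomposition of the commutator into the rescaled difference-quotient term $\int\rho(\cdot-\epsilon\mathbf{z})\,\epsilon^{-1}\big(\mathbf{u}(\cdot-\epsilon\mathbf{z})-\mathbf{u}(\cdot)\big)\cdot\nabla\eta(\mathbf{z})\,\mathrm{d}\mathbf{z}$ and the lower-order term $[\rho]_\epsilon\divv\mathbf{u}$, with the uniform bound from H\"older plus the $W^{1,q}$ translation estimate and the limit identified through the moment identity $\int z_j\partial_i\eta\,\mathrm{d}\mathbf{z}=-\delta_{ij}$. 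Nothing further is needed.
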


\subsection{Uniform estimates for $F$, $\boldsymbol\omega$, $\nabla\mathbf{u}$, and $\dot{\mathbf{u}}$}

The momentum equations \eqref{a1}$_2$ can be rewritten as
\begin{equation}\label{2.1}
\rho{\dot{\mathbf{u}}}-\mathbf{B}\cdot\nabla\mathbf{B}=\nabla F-\mu\nabla^\bot\omega,
\end{equation}
which combined with the boundary conditions \eqref{a3} yields that
\begin{align}\label{2.2}
\begin{cases}
-\mu\Delta\omega=\nabla^\bot\cdot(\rho\dot{\mathbf{u}}-\mathbf{B}\cdot\nabla\mathbf{B}), &\mathbf{x}\in\Omega,\\
\omega=0, &\mathbf{x}\in\partial\Omega.
\end{cases}
\end{align}
Thus, we obtain the following estimates by employing Gagliardo--Nirenberg inequality, the standard $L^p$-estimate for elliptic system \eqref{2.2}, the Hodge-type decomposition, and the fact that $2\mu+\lambda\geq\mu>0$. Their proofs are straightforward adaptations of arguments in \cite{WWZ2}, except for the analysis of $\|F\|_{L^p}$.
\begin{lemma}\label{E0}
Let $(\rho,\mathbf{u},\mathbf{B})$ be a smooth solution to the problem \eqref{a1}--\eqref{a3}. Then, for any $2\leq p<\infty$, there exists a generic positive constant $C$ depending only on $p$, $\mu$, and $\Omega$ such that
\begin{equation}\label{E1}
\|\nabla \mathbf{B}\|_{L^p}\leq C\|\curl\mathbf{B}\|_{L^p},
\end{equation}
\begin{equation}\label{E2}
\|\nabla F\|_{L^p}+\|\nabla^2\mathcal{P}\mathbf{u}\|_{L^p}+\|\nabla\omega\|_{L^p}\leq
C\big(\|\rho\dot{\mathbf{u}}\|_{L^p}+\|\nabla \mathbf{u}\|_{L^p}+\|\mathbf{B}\cdot\nabla\mathbf{B}\|_{L^p}\big),
\end{equation}
\begin{equation}\label{E3}
\|\nabla\mathcal{P}\mathbf{u}\|_{L^p}+
\|\omega\|_{L^p}\leq C\big(\|\rho\dot{\mathbf{u}}\|_{L^2}+\|\mathbf{B}\cdot\nabla\mathbf{B}\|_{L^2}\big)^{1-\frac{2}{p}}\|\nabla \mathbf{u}\|_{L^2}^\frac{2}{p}+C\|\nabla \mathbf{u}\|_{L^2},
\end{equation}
\begin{equation}\label{E4}
\|F\|_{H^1}\leq
C\big(\|\rho\dot{\mathbf{u}}\|_{L^2}+\|\nabla \mathbf{u}\|_{L^2}+\||\mathbf{B}||\nabla\mathbf{B}|\|_{L^2}+\|\mathbf{B}\|_{L^4}^2\big),
\end{equation}
\begin{equation}\label{E5}
\|F\|_{L^p}\leq
 C\big[(2\mu+\lambda)\|\divv\mathbf{u}\|_{L^2}+\|P-\bar{P}\|_{L^2}\big]^\frac{2}{p}
 \big(\|\rho\dot{\mathbf{u}}\|_{L^2}+\|\nabla \mathbf{u}\|_{L^2}+\||\mathbf{B}||\nabla\mathbf{B}|\|_{L^2}\big)^{1-\frac{2}{p}}
 +C\|\mathbf{B}\|_{L^{2p}}^2,
\end{equation}
\begin{equation}\label{E6}
\begin{aligned}[b]
\|\nabla \mathbf{u}\|_{L^p}
&\leq
C\big(\|\rho\dot{\mathbf{u}}\|_{L^2}+\||\mathbf{B}||\nabla\mathbf{B}|\|_{L^2}\big)
^{1-\frac{2}{p}}\|\nabla \mathbf{u}\|_{L^2}^\frac{2}{p}+C\|\nabla \mathbf{u}\|_{L^2}
+\frac{C}{2\mu+\lambda}\big(\|P-\bar{P}\|_{L^p}+\|\mathbf{B}\|_{L^{2p}}^2\big)\\
&\quad+\frac{C}{2\mu+\lambda}\|P-\bar{P}\|_{L^2}^\frac{2}{p}
\big(\|\rho\dot{\mathbf{u}}\|_{L^2}
+\|\nabla \mathbf{u}\|_{L^2}
+\||\mathbf{B}||\nabla\mathbf{B}|\|_{L^2}\big)^{1-\frac{2}{p}}.
\end{aligned}
\end{equation}
\end{lemma}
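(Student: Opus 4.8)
The plan is to establish \eqref{E1}--\eqref{E6} in their natural order of dependence, building on the elliptic reformulation \eqref{2.1}--\eqref{2.2}, the Hodge decomposition (Lemma \ref{Hodge}), and the Gagliardo--Nirenberg interpolations (Lemma \ref{GN}), while singling out $\|F\|_{L^p}$ as the one step that requires a genuinely new idea. Estimate \eqref{E1} is immediate from Lemma \ref{Hodge}: since $\divv\mathbf{B}=0$ and $(\mathbf{B}\cdot\mathbf{n})|_{\partial\Omega}=0$, one has $\|\nabla\mathbf{B}\|_{L^p}\le C(\|\divv\mathbf{B}\|_{L^p}+\|\curl\mathbf{B}\|_{L^p})=C\|\curl\mathbf{B}\|_{L^p}$. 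For \eqref{E2} I would first treat the Dirichlet problem \eqref{2.2}: its right-hand side $\nabla^\bot\cdot(\rho\dot{\mathbf{u}}-\mathbf{B}\cdot\nabla\mathbf{B})$ is in divergence form, so the $L^p$-theory for the Laplacian gives $\|\nabla\omega\|_{L^p}\le C(\|\rho\dot{\mathbf{u}}\|_{L^p}+\|\mathbf{B}\cdot\nabla\mathbf{B}\|_{L^p})$. Reading \eqref{2.1} as $\nabla F=\rho\dot{\mathbf{u}}-\mathbf{B}\cdot\nabla\mathbf{B}+\mu\nabla^\bot\omega$ then controls $\|\nabla F\|_{L^p}$ by the same quantity. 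Because $\mathcal{P}\mathbf{u}$ is divergence-free, tangent to $\partial\Omega$, and satisfies $\curl\mathcal{P}\mathbf{u}=\omega$, writing $\mathcal{P}\mathbf{u}=\nabla^\bot\psi$ with $\Delta\psi=\omega$, $\psi|_{\partial\Omega}=0$ and combining elliptic regularity with Lemma \ref{PO} yields $\|\nabla^2\mathcal{P}\mathbf{u}\|_{L^p}\le C\|\nabla\omega\|_{L^p}$; summing the three bounds gives \eqref{E2}. Estimate \eqref{E3} then follows by interpolating the scalar $\omega$ (which vanishes on $\partial\Omega$) via Lemma \ref{GN}, $\|\omega\|_{L^p}\le C\|\omega\|_{L^2}^{2/p}\|\nabla\omega\|_{L^2}^{1-2/p}+C\|\omega\|_{L^2}$, inserting $\|\omega\|_{L^2}\le C\|\nabla\mathbf{u}\|_{L^2}$ and the $p=2$ case of the vorticity estimate, and absorbing $\|\nabla\mathcal{P}\mathbf{u}\|_{L^p}$ into $\|\omega\|_{L^p}$ through Lemma \ref{Hodge}.

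The crux is \eqref{E5} together with its $H^1$ companion \eqref{E4}, which I expect to be the main obstacle: because of the term $-\frac12|\mathbf{B}|^2$ in the definition of $F$, the flux $F$ has nonzero average, so the Poincar\'e/Gagliardo--Nirenberg additive constant cannot be dropped directly for $F$. My plan is to pass to $G\triangleq F+\frac12|\mathbf{B}|^2=(2\mu+\lambda)\divv\mathbf{u}-(P-\bar{P})$, which is mean-zero since $\overline{\divv\mathbf{u}}=|\Omega|^{-1}\int_{\partial\Omega}\mathbf{u}\cdot\mathbf{n}\,\mathrm{d}s=0$. Then Lemma \ref{GN} with vanishing constant gives $\|G\|_{L^p}\le C\|G\|_{L^2}^{2/p}\|\nabla G\|_{L^2}^{1-2/p}$, where $\|G\|_{L^2}\le(2\mu+\lambda)\|\divv\mathbf{u}\|_{L^2}+\|P-\bar{P}\|_{L^2}$ and, using $\nabla G=\nabla F+\frac12\nabla|\mathbf{B}|^2$ with the bound on $\|\nabla F\|_{L^2}$ from \eqref{E2} and $\|\nabla|\mathbf{B}|^2\|_{L^2}\le C\||\mathbf{B}||\nabla\mathbf{B}|\|_{L^2}$, one gets $\|\nabla G\|_{L^2}\le C(\|\rho\dot{\mathbf{u}}\|_{L^2}+\|\nabla\mathbf{u}\|_{L^2}+\||\mathbf{B}||\nabla\mathbf{B}|\|_{L^2})$. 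Writing $F=G-\frac12|\mathbf{B}|^2$ and bounding $\||\mathbf{B}|^2\|_{L^p}=\|\mathbf{B}\|_{L^{2p}}^2$ produces \eqref{E5}; applying Lemma \ref{PO} to $G$ in the same spirit, so that $\|F\|_{L^2}\le C\|\nabla G\|_{L^2}+C\|\mathbf{B}\|_{L^4}^2$, yields \eqref{E4}.

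Finally, \eqref{E6} follows from Lemma \ref{Hodge}, $\|\nabla\mathbf{u}\|_{L^p}\le C(\|\divv\mathbf{u}\|_{L^p}+\|\omega\|_{L^p})$: the vorticity part is exactly \eqref{E3} (after $\|\mathbf{B}\cdot\nabla\mathbf{B}\|_{L^2}\le\||\mathbf{B}||\nabla\mathbf{B}|\|_{L^2}$), while the identity $\divv\mathbf{u}=(2\mu+\lambda)^{-1}\big(F+(P-\bar{P})+\frac12|\mathbf{B}|^2\big)$ converts \eqref{E5} into the $\lambda$-gaining contributions; here one splits $[(2\mu+\lambda)\|\divv\mathbf{u}\|_{L^2}+\|P-\bar{P}\|_{L^2}]^{2/p}$ and uses $(2\mu+\lambda)^{2/p-1}\le C$ (valid since $p\ge 2$ and $2\mu+\lambda\ge 2\mu$) together with $\|\divv\mathbf{u}\|_{L^2}\le\|\nabla\mathbf{u}\|_{L^2}$ to absorb the divergence part into the first two terms of \eqref{E6}, leaving the $\|P-\bar{P}\|_{L^2}^{2/p}$ piece as the final summand. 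Apart from the mean-zero detour through $G$, every step is a routine adaptation of the corresponding argument in \cite{WWZ2}; the essential point is to convert each occurrence of $F$ on which Poincar\'e-type control is needed into the mean-zero quantity $G$, isolating the magnetic contribution as the separately estimated $\|\mathbf{B}\|_{L^{2p}}^2$ (respectively $\|\mathbf{B}\|_{L^4}^2$).
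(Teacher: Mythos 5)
Your proposal is correct and takes essentially the same route as the paper, which likewise reduces \eqref{E1}--\eqref{E3} and \eqref{E6} to the elliptic system \eqref{2.2}, Lemma \ref{Hodge}, and Lemma \ref{GN} (citing \cite{WWZ2} for those details) and proves \eqref{E4}--\eqref{E5} exactly by passing to the mean-zero quantity $G=F+\frac12|\mathbf{B}|^2$. One cosmetic slip: the justification ``$2\mu+\lambda\ge 2\mu$'' used to bound $(2\mu+\lambda)^{2/p-1}$ can fail since only $\mu+\lambda\ge 0$ is assumed, but the correct bound $2\mu+\lambda\ge\mu>0$ (which the paper explicitly invokes) suffices and leaves the argument intact.
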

\begin{proof}
As stated above, we only focus on the analysis of $\|F\|_{L^p}$ in what follows. To this end, let
\begin{equation}\label{2.9}
  G\triangleq F+\frac12|\mathbf{B}|^2=(2\mu+\lambda)\divv\mathbf{u}-(P(\rho)-\bar{P}),
\end{equation}
then one sees that $\bar{G}=0$. This along with Sobolev's, Poincar{\'e}'s, and Gagliardo--Nirenberg inequalities implies that
\begin{align}\label{2.10}
 \|F\|_{L^p}&\leq C\|F\|_{H^1} \notag \\
 &\leq C\big(\|\nabla F\|_{L^2}+\|\nabla G\|_{L^2}+\|\mathbf{B}\|_{L^4}^2\big)\notag\\
 &\leq C\big(\|\nabla F\|_{L^2}+\|\nabla |\mathbf{B}|^2\|_{L^2}+\|\mathbf{B}\|_{L^4}^2\big)\notag\\
 &\leq C\big(\|\rho\dot{\mathbf{u}}\|_{L^2}+\|\nabla \mathbf{u}\|_{L^2}+\||\mathbf{B}||\nabla\mathbf{B}|\|_{L^2}+\|\mathbf{B}\|_{L^4}^2\big),
\end{align}
 and
\begin{align}\label{2.11}
 \|F\|_{L^p}&\leq C\big(\|G\|_{L^p}+\|\mathbf{B}\|_{L^{2p}}^2\big) \notag \\
 &\leq C\Big(\|G\|_{L^2}^{\frac{2}{p}}\|\nabla G\|_{L^2}^{1-\frac{2}{p}}+\|\mathbf{B}\|_{L^{2p}}^2\Big)\notag\\
 &\leq C\big[(2\mu+\lambda)\|\divv\mathbf{u}\|_{L^2}+\|P-\bar{P}\|_{L^2}\big]^\frac{2}{p}
 \big(\|\nabla F\|_{L^2}+\|\nabla|\mathbf{B}|^2\|_{L^2}\big)^{1-\frac{2}{p}}
 +C\|\mathbf{B}\|_{L^{2p}}^2\notag\\&\leq
 C\big[(2\mu+\lambda)\|\divv\mathbf{u}\|_{L^2}+\|P-\bar{P}\|_{L^2}\big]^\frac{2}{p}
 \big(\|\rho\dot{\mathbf{u}}\|_{L^2}+\|\nabla \mathbf{u}\|_{L^2}+\||\mathbf{B}||\nabla\mathbf{B}|\|_{L^2}\big)^{1-\frac{2}{p}} \notag \\ & \quad +C\|\mathbf{B}\|_{L^{2p}}^2,
\end{align}
as the desired \eqref{E4} and \eqref{E5}, which combined with \eqref{E3} and Lemma \ref{Hodge} indicates \eqref{E6} owing to
\begin{equation*}
\|\nabla\mathbf{u}\|_{L^p}\leq \frac{C}{2\mu+\lambda}
\big(\|F\|_{L^p}+\|P-\bar{P}\|_{L^p}+\|\mathbf{B}\|_{L^{2p}}^2\big)
+C\|\omega\|_{L^p}.
\tag*{\qedhere}
\end{equation*}
\end{proof}

Next, we retell the boundary condition \eqref{a3}. Indeed,
$(\mathbf{u}\cdot \mathbf{n})|_{\partial\Omega}=0$ yields that
\begin{equation*}
\mathbf{u}=(\mathbf{u}\cdot\mathbf{n}^\bot)\mathbf{n}^\bot, ~~\text{on}~\partial\Omega,
\end{equation*}
where $\mathbf{n}^\bot\triangleq(n^2,-n^1)$ is the unit tangent vector along the boundary.
Then it follows that
\begin{align}\label{2.12}
  \dot{\mathbf{u}}\cdot\mathbf{n}&=\mathbf{u}\cdot \nabla\mathbf{u}\cdot\mathbf{n}
  =\mathbf{u}\cdot\nabla(\mathbf{u}\cdot\mathbf{n})-\mathbf{u}\cdot \nabla\mathbf{n}\cdot\mathbf{u}
  =-\mathbf{u}\cdot \nabla\mathbf{n}\cdot\mathbf{u}\\
  &=-(\mathbf{u}\cdot\mathbf{n}^\bot)\mathbf{u}\cdot \nabla\mathbf{n}\cdot\mathbf{n}^\bot
  =(\mathbf{u}\cdot\mathbf{n}^\bot)\mathbf{u}\cdot \nabla\mathbf{n}^\bot\cdot\mathbf{n}
  ,~~\text{on}~\partial\Omega,\notag
\end{align}
and thus,
\begin{equation*}
[\dot{\mathbf{u}}-(\mathbf{u}\cdot\mathbf{n}^\bot)(\mathbf{u}\cdot \nabla)\mathbf{n}^\bot]\cdot\mathbf{n}=0,~~ \text{on}~ \partial\Omega,
\end{equation*}
which together with Poincar\'{e}'s inequality ensures the following estimates analogous to \cite[Lemma 2.8]{LWZ}.
\begin{lemma}\label{ldot}
Let $(\rho,\mathbf{u},\mathbf{B})$ be a smooth solution to the problem \eqref{a1}--\eqref{a3}. For any $p\in[1,\infty)$, there is a positive constant $C$ which may depend only on $p$ and $\Omega$ such that
\begin{gather*}
\|\dot{\mathbf{u}}\|_{L^p}\leq C\left(\|\nabla\dot{\mathbf{u}}\|_{L^2}+\|\nabla \mathbf{u}\|_{L^2}^2\right),\\
\|\nabla\dot{\mathbf{u}}\|_{L^2}\leq C\big(\|\divv\dot{\mathbf{u}}\|_{L^2}+\|\curl \dot{\mathbf{u}}\|_{L^2}+\|\nabla \mathbf{u}\|_{L^4}^2\big).
\end{gather*}
\end{lemma}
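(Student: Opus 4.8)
The plan is to exploit the boundary identity recorded just above the statement. Letting $\mathbf{n}$ and $\mathbf{n}^\bot$ denote fixed smooth extensions to $\overline{\Omega}$ of the unit outward normal and tangent, I would set $\mathbf{g}\triangleq(\mathbf{u}\cdot\mathbf{n}^\bot)(\mathbf{u}\cdot\nabla)\mathbf{n}^\bot$, so that by \eqref{2.12} the field $\mathbf{w}\triangleq\dot{\mathbf{u}}-\mathbf{g}$ satisfies $(\mathbf{w}\cdot\mathbf{n})|_{\partial\Omega}=0$ and therefore falls within the scope of Poincar\'e's inequality (Lemma \ref{PO}) and the Hodge-type decomposition (Lemma \ref{Hodge}). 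The correction $\mathbf{g}$ is purely quadratic in $\mathbf{u}$ with smooth coefficients, so pointwise $|\mathbf{g}|\le C|\mathbf{u}|^2$ and $|\nabla\mathbf{g}|\le C(|\mathbf{u}||\nabla\mathbf{u}|+|\mathbf{u}|^2)$. Both estimates then follow by splitting $\dot{\mathbf{u}}=\mathbf{w}+\mathbf{g}$ and handling the two pieces separately; throughout I would use that $(\mathbf{u}\cdot\mathbf{n})|_{\partial\Omega}=0$ allows Lemma \ref{GN} (with $C_2=0$) and Lemma \ref{PO} to yield $\|\mathbf{u}\|_{L^s}\le C\|\nabla\mathbf{u}\|_{L^2}$ for every finite $s$.

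For the first estimate, I would bound $\|\mathbf{g}\|_{L^p}\le C\|\mathbf{u}\|_{L^{2p}}^2\le C\|\nabla\mathbf{u}\|_{L^2}^2$ at once. The term $\|\mathbf{w}\|_{L^p}$ is the delicate one: the naive route $\|\mathbf{w}\|_{L^p}\le C\|\nabla\mathbf{w}\|_{L^2}$ is doomed, since $\nabla\mathbf{w}$ carries $\nabla\mathbf{g}$ and hence $\||\mathbf{u}||\nabla\mathbf{u}|\|_{L^2}$, which is not controlled by $\|\nabla\mathbf{u}\|_{L^2}^2$. Instead I would measure $\mathbf{w}$ through the subcritical Sobolev embedding $W^{1,q}(\Omega)\hookrightarrow L^p(\Omega)$ with $q=\tfrac{2p}{p+2}<2$, which combined with Poincar\'e gives $\|\mathbf{w}\|_{L^p}\le C\|\nabla\mathbf{w}\|_{L^q}\le C(\|\nabla\dot{\mathbf{u}}\|_{L^q}+\|\nabla\mathbf{g}\|_{L^q})$. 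The point is that $q<2$ rescues the correction: by H\"older and the full range of Lebesgue bounds for $\mathbf{u}$,
\[
\||\mathbf{u}||\nabla\mathbf{u}|\|_{L^q}\le\|\mathbf{u}\|_{L^{2q/(2-q)}}\|\nabla\mathbf{u}\|_{L^2}\le C\|\nabla\mathbf{u}\|_{L^2}^2,
\]
so that $\|\nabla\mathbf{g}\|_{L^q}\le C\|\nabla\mathbf{u}\|_{L^2}^2$; together with $\|\nabla\dot{\mathbf{u}}\|_{L^q}\le C\|\nabla\dot{\mathbf{u}}\|_{L^2}$ (bounded domain, $q<2$) this closes the first bound for every finite $p$.

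For the second estimate I would write $\|\nabla\dot{\mathbf{u}}\|_{L^2}\le\|\nabla\mathbf{w}\|_{L^2}+\|\nabla\mathbf{g}\|_{L^2}$ and apply Lemma \ref{Hodge} to $\mathbf{w}$, getting $\|\nabla\mathbf{w}\|_{L^2}\le C(\|\divv\mathbf{w}\|_{L^2}+\|\curl\mathbf{w}\|_{L^2})$; substituting $\mathbf{w}=\dot{\mathbf{u}}-\mathbf{g}$ produces $\|\divv\dot{\mathbf{u}}\|_{L^2}+\|\curl\dot{\mathbf{u}}\|_{L^2}$ plus the $\nabla\mathbf{g}$ contribution. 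This time the target $\|\nabla\mathbf{u}\|_{L^4}^2$ is weak enough to absorb the correction outright, since $\||\mathbf{u}||\nabla\mathbf{u}|\|_{L^2}\le\|\mathbf{u}\|_{L^4}\|\nabla\mathbf{u}\|_{L^4}\le C\|\nabla\mathbf{u}\|_{L^2}\|\nabla\mathbf{u}\|_{L^4}\le C\|\nabla\mathbf{u}\|_{L^4}^2$ and $\|\mathbf{u}\|_{L^4}^2\le C\|\nabla\mathbf{u}\|_{L^4}^2$, giving $\|\nabla\mathbf{g}\|_{L^2}\le C\|\nabla\mathbf{u}\|_{L^4}^2$.

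The hard part, common to both inequalities, is that the boundary forces a nonzero normal trace on $\dot{\mathbf{u}}$, and the correction $\mathbf{g}$ that removes it has a gradient of size $|\mathbf{u}||\nabla\mathbf{u}|$, sitting one derivative above what $\|\nabla\mathbf{u}\|_{L^2}^2$ can see. I expect the resolution to be asymmetric: the second inequality already carries the stronger $\|\nabla\mathbf{u}\|_{L^4}^2$ on its right-hand side, so nothing clever is needed there, whereas the first inequality forces the estimate into the subcritical space $L^q$ with $q<2$, where the two-dimensional Poincar\'e inequality (placing $\mathbf{u}$ in every $L^s$) lets H\"older close the bound against $\|\nabla\mathbf{u}\|_{L^2}^2$ alone. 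If that subcritical trick turned out to be unavailable, an alternative would be to run the first bound through $\|\dot{\mathbf{u}}\|_{L^p}\le C\|\dot{\mathbf{u}}\|_{H^1}$ and a compactness-type Poincar\'e $\|\dot{\mathbf{u}}\|_{L^2}\le C(\|\nabla\dot{\mathbf{u}}\|_{L^2}+\|\dot{\mathbf{u}}\cdot\mathbf{n}\|_{L^2(\partial\Omega)})$, estimating the boundary term by the trace embedding $H^1(\Omega)\hookrightarrow L^4(\partial\Omega)$.
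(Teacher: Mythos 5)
Your proof is correct and follows the same route the paper intends: the identity \eqref{2.12} reduces everything to the corrected field $\mathbf{w}=\dot{\mathbf{u}}-(\mathbf{u}\cdot\mathbf{n}^\bot)(\mathbf{u}\cdot\nabla)\mathbf{n}^\bot$, whose normal trace vanishes, so that Lemma \ref{PO} and Lemma \ref{Hodge} apply exactly as in the cited reference, and your subcritical $W^{1,q}$ detour with $q=\tfrac{2p}{p+2}$ is a legitimate way to keep the correction's gradient at the $\|\nabla\mathbf{u}\|_{L^2}^2$ level for the first inequality. The only point worth adding is that for $p\le 2$ this exponent $q$ falls outside the range $1<q<\infty$ of Lemma \ref{PO} (and of the Sobolev embedding you invoke), so one should first establish the bound for some fixed $p>2$ and then deduce the cases $p\in[1,2]$ by H\"older's inequality on the bounded domain.
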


\section{\textit{A priori} estimates}\label{sec3}

This section provides some necessary {\it a priori} bounds for strong solutions guaranteed by Lemma $\ref{l2.1}$. It should be noted that these bounds are independent of the bulk viscosity $\lambda$, the lower bound of $\rho$, the initial regularity, and the time of existence. Specifically, let $T>0$ be fixed and $(\rho, \mathbf{u}, \mathbf{B})$ be the strong solution to \eqref{a1}--\eqref{a3} on $\Omega\times(0, T]$, the following key {\it a priori} estimates on $(\rho, \mathbf{u}, \mathbf{B})$ will be established.
\begin{proposition}\label{p3.1}
Under the conditions of Theorem $\ref{t1.1}$, if $(\rho, \mathbf{u}, \mathbf{B})$ is a strong solution to initial-boundary value problem \eqref{a1}--\eqref{a3} satisfying
\begin{align}\label{3.1}
\sup_{\Omega\times[0,T]}\rho\le2\hat{\rho},\ \ \frac{1}{(2\mu+\lambda)^2}\int_{0}^{T}
\|P-\bar{P}\|_{L^4}^4\mathrm{d}t\le2,
\end{align}
then one has that
\begin{align}\label{3.2}
\sup_{\Omega\times[0,T]}\rho\le\frac{7}{4}\hat{\rho},\ \ \frac{1}{(2\mu+\lambda)^2}\int_{0}^{T}
\|P-\bar{P}\|_{L^4}^4\mathrm{d}t\le1.
\end{align}
\end{proposition}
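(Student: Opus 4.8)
The plan is to treat Proposition \ref{p3.1} as a closed continuation argument: working \emph{under} the hypothesis \eqref{3.1}, so that in particular $0\le\rho\le2\hat\rho$ and the time integral of $(2\mu+\lambda)^{-2}\|P-\bar P\|_{L^4}^4$ is at most $2$, I would build a hierarchy of a priori bounds whose constants depend only on $\Omega,\hat\rho,a,\gamma,\nu,\mu$ and $C_0$ --- crucially \emph{not} on $\lambda$, on $\inf\rho$, nor on $T$ --- and then invoke the largeness assumption \eqref{lam} to upgrade these bounds into the strict improvements \eqref{3.2}. The starting point is the energy law, which under $\rho\le2\hat\rho$ controls $\sup_t(\|\sqrt\rho\mathbf{u}\|_{L^2}^2+\|\mathbf{B}\|_{L^2}^2+\int G(\rho)\,\mathrm{d}\mathbf{x})$ and $\int_0^T(\|\nabla\mathbf{u}\|_{L^2}^2+\|\nabla\mathbf{B}\|_{L^2}^2)\,\mathrm{d}t$ by $C_0$. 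All elliptic information on $\mathbf{u}$ is then read off from \eqref{2.1}--\eqref{2.2} via Lemma \ref{E0}, trading $\|\nabla\mathbf{u}\|_{L^p}$, $\|F\|_{L^p}$ and $\|\nabla^2\mathcal{P}\mathbf{u}\|_{L^p}$ for $\|\rho\dot{\mathbf{u}}\|_{L^2}$, the magnetic nonlinearity, and the pressure.

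Next I would establish the first-order estimate (Lemma \ref{l3.2}): multiplying $\eqref{a1}_2$ by $\mathbf{u}_t=\dot{\mathbf{u}}-\mathbf{u}\cdot\nabla\mathbf{u}$ produces $\frac{\mathrm{d}}{\mathrm{d}t}\big[(2\mu+\lambda)\|\divv\mathbf{u}\|_{L^2}^2+\mu\|\omega\|_{L^2}^2\big]+\|\sqrt\rho\dot{\mathbf{u}}\|_{L^2}^2$ up to the awkward convective term $\int\rho\dot{\mathbf{u}}\cdot(\mathbf{u}\cdot\nabla)\mathbf{u}\,\mathrm{d}\mathbf{x}$ and the magnetic coupling. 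The convective term is where Lemma \ref{log} is decisive: after absorbing a small multiple of $\|\sqrt\rho\dot{\mathbf{u}}\|_{L^2}^2$, it is dominated by an expression involving $\ln(2+\|\nabla\mathbf{u}\|_{L^2}^2)$ together with $(2\mu+\lambda)^{-2}\|P-\bar P\|_{L^4}^4$ (the latter entering through \eqref{E6}), so that the a priori hypothesis \eqref{3.1} supplies exactly the time-integrable coefficient needed to run a logarithmic Gronwall argument. The magnetic terms are closed simultaneously by testing $\eqref{a1}_3$ with $\mathbf{B}_t$ and $\Delta\mathbf{B}$ and using \eqref{E1}; here \eqref{lam} guarantees that the enhanced dissipation dominates the magnetic nonlinearity. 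This yields $\sup_t\|\nabla\mathbf{u}\|_{L^2}^2$, $\sup_t\|\nabla\mathbf{B}\|_{L^2}^2$ and $\int_0^T\|\sqrt\rho\dot{\mathbf{u}}\|_{L^2}^2\,\mathrm{d}t$ bounded uniformly.

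The heart of the matter --- and the step I expect to be hardest --- is the time-weighted estimate of Lemma \ref{l3.4} for $\sup_t\sigma\|\sqrt\rho\dot{\mathbf{u}}\|_{L^2}^2+\int_0^T\sigma\|\nabla\dot{\mathbf{u}}\|_{L^2}^2\,\mathrm{d}t$. Since the bulk viscosity is entangled with $\divv\mathbf{u}$, I cannot differentiate the clean $\nabla F$ form; instead I would apply the operator $\partial_t+\divv(\mathbf{u}\,\cdot)$ to $\rho\dot{\mathbf{u}}-(2\mu+\lambda)\nabla\divv\mathbf{u}+\mu\nabla^\bot\omega+\nabla P=\mathbf{B}\cdot\nabla\mathbf{B}-\frac12\nabla|\mathbf{B}|^2$ and test with $\sigma\dot{\mathbf{u}}$, keeping the dissipation as $(2\mu+\lambda)\int(\divf\mathbf{u})^2\,\mathrm{d}\mathbf{x}$ rather than $(2\mu+\lambda)\int(\divv\dot{\mathbf{u}})^2\,\mathrm{d}\mathbf{x}$. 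The genuine difficulties are the surface integrals generated because $\rho$ is undefined on $\partial\Omega$ and the trace theorem is unavailable for some terms; I would use the boundary identity $\dot{\mathbf{u}}\cdot\mathbf{n}=(\mathbf{u}\cdot\mathbf{n}^\bot)\,\mathbf{u}\cdot\nabla\mathbf{n}^\bot\cdot\mathbf{n}$ from \eqref{2.12}, replace $F_t$ by $(F-\bar P)_t$ to reorganize $-\int_{\partial\Omega}\bar P_t\,\dot{\mathbf{u}}\cdot\mathbf{n}\,\mathrm{d}s$, and arrange cancellations among the remaining boundary terms via the divergence theorem so as never to estimate problematic quantities such as $\int_{\partial\Omega}P\dot{\mathbf{u}}\cdot\nabla\mathbf{u}\cdot\mathbf{n}\,\mathrm{d}s$ or $\int\mathbf{B}\cdot\mathbf{B}_t\,\dot{\mathbf{u}}\cdot\mathbf{n}\,\mathrm{d}\mathbf{x}$. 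Working with $\nabla^2\mathcal{P}\mathbf{u}$ (no boundary contribution) and controlling $\|F\|_{H^1}$ through \eqref{E4}, i.e.\ via $\|\nabla F\|_{L^2}$, $\|\nabla|\mathbf{B}|^2\|_{L^2}$ and $\|\mathbf{B}\|_{L^4}^2$, none of which carries a positive power of $\lambda$, is what keeps every constant uniform in $\lambda$.

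With these uniform bounds in hand, both improvements in \eqref{3.2} follow. For the density I would pass to Lagrangian coordinates as in \cite{DE97} (Lemma \ref{l3.5}): along a particle path the continuity equation and the effective-viscous-flux representation give $\frac{\mathrm{D}}{\mathrm{D}t}\log\rho=-\divv\mathbf{u}=-(2\mu+\lambda)^{-1}[F+(P-\bar P)+\frac12|\mathbf{B}|^2]$, so the monotone pressure $P-\bar P$ acts as a restoring damping while the non-dissipative contributions $F$ and $|\mathbf{B}|^2$ are weighted by $(2\mu+\lambda)^{-1}$; a Zlotnik-type ODE comparison, using the uniform dissipation estimates to make the accumulated forcing small and \eqref{lam} to render it negligible, forces $\sup_{\Omega\times[0,T]}\rho\le\frac74\hat\rho$. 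Finally, since $\rho\le2\hat\rho$ gives $\|P-\bar P\|_{L^\infty}\le C$, one has $\|P-\bar P\|_{L^4}^4\le C\|P-\bar P\|_{L^2}^2$, and the uniform-in-$(\lambda,T)$ bound $\int_0^\infty\|P-\bar P\|_{L^2}^2\,\mathrm{d}t\le C$ (from the energy-flux structure) yields $(2\mu+\lambda)^{-2}\int_0^T\|P-\bar P\|_{L^4}^4\,\mathrm{d}t\le C(2\mu+\lambda)^{-2}\le1$ exactly when $\lambda$ satisfies \eqref{lam}. Combining the two displays closes the bootstrap and proves Proposition \ref{p3.1}.
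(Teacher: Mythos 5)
Most of your outline tracks the paper's actual proof closely: the energy estimate, the $L^\infty_t L^2_x$ bound on $\nabla\mathbf{u}$ via testing with $\mathbf{u}_t$ and the Desjardins-type inequality of Lemma \ref{log}, the time-weighted material-derivative estimate built on $(2\mu+\lambda)\|\divf\mathbf{u}\|_{L^2}^2$ with the boundary terms reorganized through \eqref{2.12} and the replacement of $F_t$ by $(F-\bar{P})_t$, and the Lagrangian-coordinates argument for $\rho\le\frac74\hat\rho$ are all the right ingredients and essentially the ones used in Lemmas \ref{l3.1}, \ref{l3.2}, \ref{l3.4}, and \ref{l3.5}.

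There is, however, a genuine gap in your closing step for the second half of \eqref{3.2}. You claim a uniform-in-$(\lambda,T)$ bound $\int_0^\infty\|P-\bar P\|_{L^2}^2\,\mathrm{d}t\le C$ ``from the energy-flux structure.'' No such bound is available here: Lemma \ref{l3.1} only controls $\sup_t\int G(\rho)\,\mathrm{d}\mathbf{x}$, not its time integral, and in the regime $\lambda\to\infty$ the density is asymptotically transported (see \eqref{1.5}), so $\|P-\bar P\|_{L^2}$ does \emph{not} decay in time and its square is not integrable on $(0,\infty)$ uniformly in $\lambda$. With only $\|P-\bar P\|_{L^\infty}\le C$ your estimate degenerates to $(2\mu+\lambda)^{-2}\int_0^T\|P-\bar P\|_{L^4}^4\,\mathrm{d}t\le CT(2\mu+\lambda)^{-2}$, which grows linearly in $T$ and cannot close the bootstrap for all times. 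The missing ingredient is Lemma \ref{l3.3}: multiply the renormalized pressure equation \eqref{3.18} by $3(P-\bar P)^2$ and use $\divv\mathbf{u}=(2\mu+\lambda)^{-1}\big(G+P-\bar P\big)$ to produce the damping term $\frac{3\gamma-1}{2\mu+\lambda}\|P-\bar P\|_{L^4}^4$ on the left-hand side; this yields the $T$-independent bound $\frac{1}{2\mu+\lambda}\int_0^T\|P-\bar P\|_{L^4}^4\,\mathrm{d}t\le(2+M)^{\exp\{3D_2(1+C_0)^3\}}$, and only after dividing by one more factor of $2\mu+\lambda$ and invoking \eqref{lam} does one obtain $\frac{1}{(2\mu+\lambda)^2}\int_0^T\|P-\bar P\|_{L^4}^4\,\mathrm{d}t\le1$. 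Without this lemma (or an equivalent substitute), the bootstrap does not close.
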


Before proving Proposition \ref{p3.1}, we show some necessary \textit{a priori} estimates, see Lemmas \ref{l3.1}--\ref{l3.4} below. Let us start with the elementary energy estimate of $(\rho, \mathbf{u}, \mathbf{B})$.
\begin{lemma}\label{l3.1}
It holds that
\begin{align*}
\sup_{0\le t\leq T}\int\bigg(\frac{1}{2}\rho |\mathbf{u}|^2+\frac{1}{2} |\mathbf{B}|^2+G(\rho)\bigg)\mathrm{d}\mathbf{x}+\int_0^T\big[(2\mu+\lambda)\|\divv \mathbf{u}\|_{L^2}^2+\mu\|\curl \mathbf{u}\|_{L^2}^2+\nu\|\curl \mathbf{B}\|_{L^2}^2\big]\mathrm{d}t\leq C_0.
\end{align*}
\end{lemma}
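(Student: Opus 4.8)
The plan is to establish the energy identity by the classical procedure of testing each evolution equation against its natural multiplier and exploiting the boundary conditions \eqref{a3} to discard all surface integrals. First I would multiply the momentum equation \eqref{a1}$_2$ by $\mathbf{u}$ and integrate over $\Omega$. Using the continuity equation \eqref{a1}$_1$, the two inertial terms combine into $\frac{d}{dt}\int\frac12\rho|\mathbf{u}|^2\mathrm{d}\mathbf{x}$. Rewriting the viscous part through the 2D identity $\mu\Delta\mathbf{u}+(\mu+\lambda)\nabla\divv\mathbf{u}=(2\mu+\lambda)\nabla\divv\mathbf{u}-\mu\nabla^\bot\curl\mathbf{u}$ and integrating by parts, the boundary contributions vanish because $\mathbf{u}\cdot\mathbf{n}=0$ and $\curl\mathbf{u}=0$ on $\partial\Omega$, leaving exactly $-(2\mu+\lambda)\|\divv\mathbf{u}\|_{L^2}^2-\mu\|\curl\mathbf{u}\|_{L^2}^2$.

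For the pressure term, integration by parts gives $\int\nabla P\cdot\mathbf{u}\,\mathrm{d}\mathbf{x}=-\int P\divv\mathbf{u}\,\mathrm{d}\mathbf{x}$, and since $\int\divv\mathbf{u}\,\mathrm{d}\mathbf{x}=0$ by $\mathbf{u}\cdot\mathbf{n}|_{\partial\Omega}=0$, this coincides with $-\int(P(\rho)-P(\bar\rho))\divv\mathbf{u}\,\mathrm{d}\mathbf{x}$. The crucial algebraic step is to identify it with the potential energy: using the relation $\rho G'(\rho)-G(\rho)=P(\rho)-P(\bar\rho)$ (which follows directly by differentiating the definition of $G$ in \eqref{1.4}) together with \eqref{a1}$_1$, one checks that $\frac{d}{dt}\int G(\rho)\,\mathrm{d}\mathbf{x}=-\int(P(\rho)-P(\bar\rho))\divv\mathbf{u}\,\mathrm{d}\mathbf{x}$, so the pressure work is absorbed into the time derivative of $\int G(\rho)\,\mathrm{d}\mathbf{x}$.

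Next I would multiply the induction equation \eqref{a1}$_3$ by $\mathbf{B}$ and integrate. The convection term $\mathbf{u}\cdot\nabla\mathbf{B}$ yields $-\frac12\int|\mathbf{B}|^2\divv\mathbf{u}\,\mathrm{d}\mathbf{x}$ after integration by parts (boundary vanishing by $\mathbf{u}\cdot\mathbf{n}=0$), the compression term $\mathbf{B}\divv\mathbf{u}$ contributes $\int|\mathbf{B}|^2\divv\mathbf{u}\,\mathrm{d}\mathbf{x}$, and the stretching term $-\mathbf{B}\cdot\nabla\mathbf{u}$ contributes $-\int B^iB^j\partial_ju^i\,\mathrm{d}\mathbf{x}$. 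For the resistive term, $\divv\mathbf{B}=0$ reduces $\Delta\mathbf{B}$ to $-\nabla^\bot\curl\mathbf{B}$, and integrating by parts using $\curl\mathbf{B}=0$ on $\partial\Omega$ gives $-\nu\|\curl\mathbf{B}\|_{L^2}^2$. On the kinetic side, the Lorentz force produces $\int\mathbf{B}\cdot\nabla\mathbf{B}\cdot\mathbf{u}\,\mathrm{d}\mathbf{x}+\frac12\int|\mathbf{B}|^2\divv\mathbf{u}\,\mathrm{d}\mathbf{x}$; integrating the first integral by parts and invoking $\mathbf{B}\cdot\mathbf{n}=0$ and $\divv\mathbf{B}=0$ turns it into $-\int B^iB^j\partial_ju^i\,\mathrm{d}\mathbf{x}+\frac12\int|\mathbf{B}|^2\divv\mathbf{u}\,\mathrm{d}\mathbf{x}$.

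Adding the two resulting identities, the coupling terms cancel exactly: the magnetic side contributes $\int B^iB^j\partial_ju^i\,\mathrm{d}\mathbf{x}-\frac12\int|\mathbf{B}|^2\divv\mathbf{u}\,\mathrm{d}\mathbf{x}$ and the kinetic side its exact negative. This leaves the differential identity $\frac{d}{dt}\int\big(\frac12\rho|\mathbf{u}|^2+\frac12|\mathbf{B}|^2+G(\rho)\big)\mathrm{d}\mathbf{x}+(2\mu+\lambda)\|\divv\mathbf{u}\|_{L^2}^2+\mu\|\curl\mathbf{u}\|_{L^2}^2+\nu\|\curl\mathbf{B}\|_{L^2}^2=0$, and integrating over $[0,t]$ then taking the supremum yields the claim, with the right-hand side equal to the initial energy $C_0$ of \eqref{1.4} (indeed as an equality, hence the asserted inequality). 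I expect the only delicate points to be the bookkeeping of the boundary integrals, each of which is annihilated by one of the conditions in \eqref{a3}, and the precise cancellation of the velocity--magnetic coupling, which hinges on integrating the Lorentz force by parts so that it matches the stretching and compression contributions from the induction equation.
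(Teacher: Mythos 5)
Your proposal is correct and follows essentially the same route as the paper: the paper likewise derives the transport identity $(G(\rho))_t+\divv(G(\rho)\mathbf{u})+(P-P(\bar\rho))\divv\mathbf{u}=0$ from the continuity equation, tests the momentum and induction equations with $\mathbf{u}$ and $\mathbf{B}$, uses $\Delta\mathbf{u}=\nabla\divv\mathbf{u}-\nabla^\bot\curl\mathbf{u}$ together with the slip conditions to kill all boundary terms, and integrates the resulting identity in time. All of your intermediate cancellations (pressure work absorbed into $G$, Lorentz force against the stretching and compression terms) match the paper's computation.
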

\begin{proof}
Using $\eqref{a1}_1$ and the definition of $G(\rho)$ in \eqref{1.4}, we have that
\begin{equation}\label{3.3}
(G(\rho))_t+\divv (G(\rho)\mathbf{u})+(P-P(\bar{\rho}))\divv \mathbf{u}=0.
\end{equation}
Multiplying $\eqref{a1}_2$ by $\mathbf{u}$ and $\eqref{a1}_3$ by $\mathbf{B}$, respectively, adding the summation to \eqref{3.3}, and integrating (by parts) the resultant over $\Omega$, we deduce from
the vector identity
\begin{equation}\label{3.4}
\Delta \mathbf{u}=\nabla\divv\mathbf{u}-\nabla^\bot\curl\mathbf{u}
\end{equation}
and the boundary conditions \eqref{a3} that
\begin{align}\label{3.5}
\frac{\mathrm{d}}{\mathrm{d}t}\int\bigg(\frac{1}{2}\rho|\mathbf{u}|^2+\frac{1}{2} |\mathbf{B}|^2+G(\rho)\bigg)\mathrm{d}\mathbf{x}+\int\big[(2\mu+\lambda)(\divv\mathbf{u})^2
+\mu(\curl\mathbf{u})^2+\nu(\curl\mathbf{B})^2\big]\mathrm{d}\mathbf{x}
=0.
\end{align}
Integrating \eqref{3.5} with respect to $t$ over $(0,T)$ completes the proof.
\end{proof}

The next lemma provides a time-independent $L^\infty(0, T;L^2)$ estimate for $\divv \mathbf{u}$, $\curl \mathbf{u}$, and $\curl \mathbf{B}$.
\begin{lemma}\label{l3.2}
Let \eqref{3.1} be satisfied, then there exists a positive number $D_2$ depending only on $\Omega$, $\hat{\rho}$, $a$, $\gamma$, $\nu$, and $\mu$ such that
\begin{align*}
 &\sup_{0\leq t\leq T}\int\big[(2\mu+\lambda)(\divv \mathbf{u})^2+\mu(\curl \mathbf{u})^2+\nu(\curl \mathbf{B})^2\big]\mathrm{d}\mathbf{x}\notag\\
 &\quad +\int_0^T\big(\|\sqrt{\rho}{\dot{\mathbf{u}}}\|_{L^2}^2+\|\nabla\curl \mathbf{B}\|_{L^2}^2+\|\mathbf{B}_t\|_{L^2}^2\big)
\mathrm{d}t\leq (2+M)^{\exp\big\{2D_2(1+C_0)^3\big\}}
\end{align*}
provided that $\lambda$ satisfies \eqref{lam} with $D\geq D_2$.
\end{lemma}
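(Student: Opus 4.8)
The plan is to carry out the standard ``first-order'' energy estimate adapted to the slip boundary, testing the momentum equation $\eqref{a1}_2$ against $\mathbf{u}_t=\dot{\mathbf{u}}-\mathbf{u}\cdot\nabla\mathbf{u}$ and the induction equation $\eqref{a1}_3$ against $\mathbf{B}_t$, and then adding the two resulting identities. Using \eqref{3.4} and integrating by parts, the viscous terms produce $\frac12\frac{\mathrm{d}}{\mathrm{d}t}\big[(2\mu+\lambda)\|\divv\mathbf{u}\|_{L^2}^2+\mu\|\curl\mathbf{u}\|_{L^2}^2\big]$ together with $\int\rho|\dot{\mathbf{u}}|^2\mathrm{d}\mathbf{x}$, where all boundary contributions vanish because $\mathbf{u}\cdot\mathbf{n}=0$ (hence $\mathbf{u}_t\cdot\mathbf{n}=0$) and $\curl\mathbf{u}=0$ on $\partial\Omega$. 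Likewise, since $\Delta\mathbf{B}=-\nabla^\bot\curl\mathbf{B}$ (using $\divv\mathbf{B}=0$) and $\curl\mathbf{B}=0$ on $\partial\Omega$, the resistive term yields $\frac{\nu}{2}\frac{\mathrm{d}}{\mathrm{d}t}\|\curl\mathbf{B}\|_{L^2}^2$ and $\|\mathbf{B}_t\|_{L^2}^2$. The pressure term is rewritten as $\int P\divv\mathbf{u}_t\,\mathrm{d}\mathbf{x}=\frac{\mathrm{d}}{\mathrm{d}t}\int P\divv\mathbf{u}\,\mathrm{d}\mathbf{x}-\int P_t\divv\mathbf{u}\,\mathrm{d}\mathbf{x}$, and $P_t$ is eliminated via $\eqref{a1}_1$ so that the remainder reduces to lower-order quantities controlled by Lemma \ref{l3.1}. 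I would therefore absorb all the total time derivatives into a modified functional of the form
\[
\Phi(t)\triangleq(2\mu+\lambda)\|\divv\mathbf{u}\|_{L^2}^2+\mu\|\curl\mathbf{u}\|_{L^2}^2+\nu\|\curl\mathbf{B}\|_{L^2}^2-2\int P\divv\mathbf{u}\,\mathrm{d}\mathbf{x}+2\int\mathbf{B}\otimes\mathbf{B}:\nabla\mathbf{u}\,\mathrm{d}\mathbf{x}-\int|\mathbf{B}|^2\divv\mathbf{u}\,\mathrm{d}\mathbf{x},
\]
whose non-dissipative pieces are dominated, using the density bound $\rho\le2\hat{\rho}$ and Lemma \ref{l3.1}, so that $\Phi$ stays comparable to the leading dissipation.

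Next I would dispose of the magnetic coupling. Writing the Lorentz force as $\mathbf{B}\cdot\nabla\mathbf{B}-\frac12\nabla|\mathbf{B}|^2=\divv(\mathbf{B}\otimes\mathbf{B})-\frac12\nabla|\mathbf{B}|^2$ and integrating by parts against $\mathbf{u}_t$ (the boundary terms vanish because $\mathbf{B}\cdot\mathbf{n}=\mathbf{u}_t\cdot\mathbf{n}=0$ on $\partial\Omega$), the ensuing total derivatives are incorporated into $\Phi$, while the remainders, which contain one factor of $\mathbf{B}_t$ paired with $\nabla\mathbf{u}$, are matched against the corresponding terms produced by testing $\eqref{a1}_3$ with $\mathbf{B}_t$; the genuine cross terms cancel and the surviving contributions are bounded by $\epsilon\big(\|\mathbf{B}_t\|_{L^2}^2+\|\sqrt{\rho}\dot{\mathbf{u}}\|_{L^2}^2\big)$ plus terms such as $C\|\mathbf{B}\|_{L^4}^2\|\nabla\mathbf{u}\|_{L^4}^2$ and $C\|\nabla\mathbf{u}\|_{L^2}^2\|\nabla\mathbf{B}\|_{L^4}^2$, handled through Lemma \ref{GN}, Lemma \ref{Hodge}, and \eqref{E1}. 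The quantity $\int_0^T\|\nabla\curl\mathbf{B}\|_{L^2}^2\mathrm{d}t$ is recovered afterwards from the induction equation itself, since $\nu\|\nabla\curl\mathbf{B}\|_{L^2}=\|\mathbf{B}_t+\mathbf{u}\cdot\nabla\mathbf{B}-\mathbf{B}\cdot\nabla\mathbf{u}+\mathbf{B}\divv\mathbf{u}\|_{L^2}$ once $\|\mathbf{B}_t\|_{L^2}$ is under control.

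The heart of the matter is the convective term $\int\rho\dot{\mathbf{u}}\cdot(\mathbf{u}\cdot\nabla)\mathbf{u}\,\mathrm{d}\mathbf{x}$ arising on the right-hand side. I would estimate it by $\|\sqrt{\rho}\dot{\mathbf{u}}\|_{L^2}\|\sqrt{\rho}\mathbf{u}\|_{L^4}\|\nabla\mathbf{u}\|_{L^4}$, invoke the Desjardins-type inequality of Lemma \ref{log} to control $\|\sqrt{\rho}\mathbf{u}\|_{L^4}^2$ by $C(1+\|\sqrt{\rho}\mathbf{u}\|_{L^2})\|\nabla\mathbf{u}\|_{L^2}\sqrt{\ln(2+\|\nabla\mathbf{u}\|_{L^2}^2)}$, and use \eqref{E6} to express $\|\nabla\mathbf{u}\|_{L^4}$ through $\|\rho\dot{\mathbf{u}}\|_{L^2}$, $\|\nabla\mathbf{u}\|_{L^2}$, $\||\mathbf{B}||\nabla\mathbf{B}|\|_{L^2}$, and the $\lambda$-divided pressure and magnetic contributions. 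After absorbing $\epsilon\|\sqrt{\rho}\dot{\mathbf{u}}\|_{L^2}^2$ into the left-hand side, this is exactly where the forcing $\frac{1}{(2\mu+\lambda)^2}\|P-\bar{P}\|_{L^4}^4$ emerges, whose time integral is bounded by the \textit{a priori} hypothesis \eqref{3.1}. Collecting everything yields a differential inequality of the shape
\[
\frac{\mathrm{d}}{\mathrm{d}t}\Phi+c\big(\|\sqrt{\rho}\dot{\mathbf{u}}\|_{L^2}^2+\|\mathbf{B}_t\|_{L^2}^2\big)\le C\,\Theta(t)\,\Phi\,\ln(2+\Phi),
\]
where $\Theta\in L^1(0,T)$ with an $L^1$-norm controlled by $C_0$ and \eqref{3.1}.

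Finally, I would close the estimate by a logarithmic Gronwall argument: dividing by $(2+\Phi)\ln(2+\Phi)$ and integrating controls $\ln\ln(2+\Phi(T))$ by $\ln\ln(2+\Phi(0))$ plus $C\int_0^T\Theta\,\mathrm{d}t$, and exponentiating twice produces the bound $2+\Phi(T)\le(2+\Phi(0))^{\exp(C\int_0^T\Theta)}$, whence the double-exponential dependence on $C_0$ and on the initial quantity $M$ in the stated estimate. The main obstacle I anticipate is twofold: first, carrying out the magnetic cancellation cleanly so that no surviving term requires an uncontrolled $\|\mathbf{B}\|_{L^\infty}$ or $\|\nabla^2\mathbf{u}\|_{L^2}$; and second, more seriously, keeping \emph{every} constant independent of $\lambda$, which forces the systematic use of Lemma \ref{E0} and the decomposition in \eqref{E6} so that all $\lambda$-dependence is funnelled into the single integrable term governed by \eqref{3.1}. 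The slip boundary, which makes all boundary integrals vanish at this order, is precisely what keeps this first-order estimate tractable; the genuinely delicate boundary terms appear only later, in the $\sigma$-weighted estimate of Lemma \ref{l3.4}.
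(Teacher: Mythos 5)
Your plan follows the paper's proof of Lemma \ref{l3.2} almost step for step: the same test function $\mathbf{u}_t$ for the momentum equation, the same modified functional (your $\Phi$ is the paper's $\mathcal{E}_1$ in \eqref{3.14}), the same use of Lemma \ref{log} together with \eqref{E6} on the convective term $\int\rho\dot{\mathbf{u}}\cdot(\mathbf{u}\cdot\nabla)\mathbf{u}\,\mathrm{d}\mathbf{x}$, and the same logarithmic Gronwall closure $(\ln f)'\le Cg\ln f$ with $g\in L^1(0,T)$ by Lemma \ref{l3.1} and the hypothesis \eqref{3.1}. The minor variations are harmless: the paper obtains $\|\mathbf{B}_t\|_{L^2}^2$ and $\nu^2\|\nabla\curl\mathbf{B}\|_{L^2}^2$ simultaneously by computing $\int|\mathbf{B}_t-\nu\Delta\mathbf{B}|^2\mathrm{d}\mathbf{x}$ from $\eqref{a1}_3$, whereas you test against $\mathbf{B}_t$ and recover $\|\nabla\curl\mathbf{B}\|_{L^2}$ a posteriori from the equation, exactly as the paper itself does later in \eqref{3.37}; and the paper additionally propagates $\|\mathbf{B}\|_{L^4}^4$ through its own evolution \eqref{3.16}, which you can in principle bypass since $\|\mathbf{B}\|_{L^4}^4\le CC_0\|\curl\mathbf{B}\|_{L^2}^2$ by Lemma \ref{GN} and \eqref{E1}.

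The one genuine gap is your treatment of the pressure term. After writing $\int P\divv\mathbf{u}_t\,\mathrm{d}\mathbf{x}=\frac{\mathrm{d}}{\mathrm{d}t}\int P\divv\mathbf{u}\,\mathrm{d}\mathbf{x}-\int P_t\divv\mathbf{u}\,\mathrm{d}\mathbf{x}$ and eliminating $P_t$ through $\eqref{a1}_1$, the remainder is \emph{not} ``lower-order quantities controlled by Lemma \ref{l3.1}'': it contains $\int\mathbf{u}\cdot\nabla(P-\bar{P})\divv\mathbf{u}\,\mathrm{d}\mathbf{x}$, which carries a full spatial derivative of the merely bounded pressure and cannot be estimated by energy quantities. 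The paper's computation \eqref{3.9} handles it by integrating by parts to $-\int(P-\bar{P})\mathbf{u}\cdot\nabla\divv\mathbf{u}\,\mathrm{d}\mathbf{x}$ and then substituting $\nabla\divv\mathbf{u}=(2\mu+\lambda)^{-1}\nabla\big(F+P-\bar{P}+\frac12|\mathbf{B}|^2\big)$, after which $\|\nabla F\|_{L^2}$ is controlled via \eqref{E2} by $\|\rho\dot{\mathbf{u}}\|_{L^2}+\|\nabla\mathbf{u}\|_{L^2}+\|\mathbf{B}\cdot\nabla\mathbf{B}\|_{L^2}$ and a \emph{second} copy of the forcing $(2\mu+\lambda)^{-2}\|P-\bar{P}\|_{L^4}^4$ appears, alongside the one you correctly locate in the convective term. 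In other words, the effective-viscous-flux substitution is needed in the pressure term just as much as in the convective term; as written, that step of your outline would fail. Everything else is sound.
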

\begin{proof}
According to $\eqref{a1}_1$ and \eqref{3.4}, we can rewrite $\eqref{a1}_2$ as
\begin{equation}\label{3.6}
\rho\mathbf{u}_t+\rho\mathbf{u}\cdot\nabla\mathbf{u}
-(2\mu+\lambda)\nabla\divv \mathbf{u}+\mu\nabla^\bot\curl \mathbf{u}
+\nabla P=\mathbf{B}\cdot\nabla\mathbf{B}-\frac12\nabla|\mathbf{B}|^2.
\end{equation}
Multiplying \eqref{3.6} by $\mathbf{u}_t$ and integration by parts, one gets from \eqref{a3} that
\begin{align}\label{3.7}
&\frac{1}{2}\frac{\mathrm{d}}{\mathrm{d}t}\int\big[(2\mu+\lambda)(\divv \mathbf{u})^2+\mu(\curl \mathbf{u})^2-|\mathbf{B}|^2\divv
\mathbf{u}+2\mathbf{B}\cdot\nabla\mathbf{u}\cdot\mathbf{B}\big]
\mathrm{d}\mathbf{x}+\int\rho |\dot{\mathbf{u}}|^2\mathrm{d}\mathbf{x}\notag\\
&=-\int\mathbf{u}_t\cdot\nabla P\mathrm{d}\mathbf{x}+\int\rho \dot{\mathbf{u}}\cdot(\mathbf{u}\cdot\nabla)\mathbf{u}\mathrm{d}\mathbf{x}
+\int\big(\mathbf{B}_t\cdot\nabla\mathbf{u}\cdot\mathbf{B}+\mathbf{B}\cdot\nabla\mathbf{u}\cdot\mathbf{B}_t
-\mathbf{B}\cdot\mathbf{B}_t\divv\mathbf{u}\big)\mathrm{d}\mathbf{x}.
\end{align}
From $\eqref{a1}_3$ and \eqref{a3}, we have that
\begin{align*}
\int|\mathbf{B}\cdot\nabla\mathbf{u}-\mathbf{u}\cdot\nabla\mathbf{B}
-\mathbf{B}\divv\mathbf{u}|^2\mathrm{d}\mathbf{x}
&=\int|\mathbf{B}_t-\nu\Delta\mathbf{B}|^2\mathrm{d}\mathbf{x}\notag\\
&=\frac{\mathrm{d}}{\mathrm{d}t}\int\nu(\curl \mathbf{B})^2\mathrm{d}\mathbf{x}+
\int\big(\nu^2|\nabla\curl \mathbf{B}|^2+|\mathbf{B}_t|^2\big)\mathrm{d}\mathbf{x},
\end{align*}
which together with \eqref{3.7} yields that
\begin{align}\label{3.8}
 &\frac{1}{2}\frac{\mathrm{d}}{\mathrm{d}t}\int\big[(2\mu+\lambda)(\divv \mathbf{u})^2+\mu(\curl \mathbf{u})^2+2\nu(\curl \mathbf{B})^2-|\mathbf{B}|^2\divv
\mathbf{u}+2\mathbf{B}\cdot\nabla\mathbf{u}\cdot\mathbf{B}\big]\mathrm{d}\mathbf{x}\notag\\
&\quad+\int(\rho |\dot{\mathbf{u}}|^2+\nu^2|\nabla\curl \mathbf{B}|^2+|\mathbf{B}_t|^2)\mathrm{d}\mathbf{x}\notag\\
 &=-\int\mathbf{u}_t\cdot\nabla P\mathrm{d}\mathbf{x}+\int\rho \dot{\mathbf{u}}\cdot(\mathbf{u}\cdot\nabla)\mathbf{u}\mathrm{d}\mathbf{x}
 +\int(\mathbf{B}_t\cdot\nabla\mathbf{u}\cdot\mathbf{B}+\mathbf{B}\cdot\nabla\mathbf{u}\cdot\mathbf{B}_t
-\mathbf{B}\cdot\mathbf{B}_t\divv\mathbf{u})\mathrm{d}\mathbf{x}
 \notag\\
&\quad+\int|\mathbf{B}\cdot\nabla\mathbf{u}-\mathbf{u}\cdot\nabla\mathbf{B}-\mathbf{B}\divv\mathbf{u}|^2\mathrm{d}\mathbf{x}
\triangleq\sum_{i=1}^{4}\mathcal{I}_i.
\end{align}

We then proceed to bound each $\mathcal{I}_i$. It can be deduced from $\eqref{a1}_1$ and \eqref{1.7} that
\begin{align}\label{3.9}
&\mathcal{I}_1=\int P\divv\mathbf{u}_t\mathrm{d}\mathbf{x}=\frac{\mathrm{d}}{\mathrm{d}t}\int P\divv\mathbf{u} \mathrm{d}\mathbf{x}-\int P'(\rho)\rho_t\divv\mathbf{u}\mathrm{d}\mathbf{x}\notag\\
&=\frac{\mathrm{d}}{\mathrm{d}t}\int P\divv\mathbf{u}\mathrm{d}\mathbf{x}+
\int P'(\rho)\rho(\divv\mathbf{u})^2\mathrm{d}\mathbf{x}+
\int\mathbf{u}\cdot\nabla(P-\bar{P})\divv\mathbf{u}\mathrm{d}\mathbf{x}\notag\\
&=\frac{\mathrm{d}}{\mathrm{d}t}\int P\divv\mathbf{u}\mathrm{d}\mathbf{x}+
\int(P'(\rho)\rho-P+\bar{P})(\divv\mathbf{u})^2\mathrm{d}\mathbf{x}-
\int(P-\bar{P})\mathbf{u}\cdot\nabla\divv\mathbf{u}\mathrm{d}\mathbf{x}\notag\\
&=\frac{\mathrm{d}}{\mathrm{d}t}\int P\divv\mathbf{u}\mathrm{d}\mathbf{x}+
\int(P'(\rho)\rho-P+\bar{P})(\divv\mathbf{u})^2\mathrm{d}\mathbf{x}-\frac{1}{2\mu+\lambda}
\int (P-\bar{P})\mathbf{u}\cdot\nabla\Big(F+P-\bar{P}+\frac12|\mathbf{B}|^2\Big)\mathrm{d}\mathbf{x}\notag\\
&=\frac{\mathrm{d}}{\mathrm{d}t}\int P\divv\mathbf{u}\mathrm{d}\mathbf{x}+
\int(P'(\rho)\rho-P+\bar{P})(\divv\mathbf{u})^2\mathrm{d}\mathbf{x}
+\frac{1}{4\mu+2\lambda}\int(P-\bar{P})^2\divv\mathbf{u}\mathrm{d}\mathbf{x}\notag\\
&\quad-\frac{1}{2\mu+\lambda}
\int (P-\bar{P})\mathbf{u}\cdot\nabla F\mathrm{d}\mathbf{x}
-\frac{1}{2\mu+\lambda}
\int (P-\bar{P})\mathbf{u}\cdot\nabla \mathbf{B}\cdot\mathbf{B}\mathrm{d}\mathbf{x}
\notag\\
&\leq\frac{\mathrm{d}}{\mathrm{d}t}\int P\divv\mathbf{u}\mathrm{d}\mathbf{x}
+\frac{1}{8}\|\sqrt{\rho}\dot{\mathbf{u}}\|_{L^2}^2
+\frac{\nu^2}{8}\|\nabla\curl\mathbf{B}\|_{L^2}^2
+C\|\nabla\mathbf{B}\|_{L^2}^2\big(1+\|\nabla\mathbf{B}\|_{L^2}^2\big)\notag\\&\quad
+C\|\nabla\mathbf{u}\|_{L^{2}}^{2}\big(1+\|\nabla\mathbf{u}\|_{L^{2}}^{2}\big)
+\frac{C}{(2\mu+\lambda)^2}\|P-\bar{P}\|_{L^4}^4,
\end{align}
where we have used
\begin{align*}
&\frac{1}{2\mu+\lambda}\left|
\int(P-\bar{P})\mathbf{u}\cdot\nabla F\mathrm{d}\mathbf{x}\right|
+\frac{1}{2\mu+\lambda}\left|
\int(P-\bar{P})\mathbf{u}\cdot\nabla \mathbf{B}\cdot\mathbf{B}\mathrm{d}\mathbf{x}\right|\notag\\
&\leq\frac{C}{2\mu+\lambda}\|P-\bar{P}\|_{L^\infty}
(\|\mathbf{u}\|_{L^2}
\|\nabla F\|_{L^2}+\|\mathbf{u}\|_{L^4}\|\nabla \mathbf{B}\|_{L^2}\|\mathbf{B}\|_{L^4})\notag\\
&\leq\frac{C}{2\mu+\lambda}\|\nabla\mathbf{u}\|_{L^2}
\big(\|\sqrt{\rho}\dot{\mathbf{u}}\|_{L^2}+\|\nabla\mathbf{u}\|_{L^2}+\||\mathbf{B}||\nabla\mathbf{B}|
\|_{L^2}+\|\nabla\mathbf{B}\|_{L^2}^2\big)
\notag\\
&\leq\frac{1}{8}\|\sqrt{\rho}\dot{\mathbf{u}}\|_{L^2}^2
+\frac{\nu^2}{8}\|\nabla\curl\mathbf{B}\|_{L^2}^2
+C\|\nabla\mathbf{B}\|_{L^2}^2\big(1+\|\nabla\mathbf{B}\|_{L^2}^2\big)
+C\|\nabla\mathbf{u}\|_{L^{2}}^{2}\big(1+\|\nabla\mathbf{u}\|_{L^{2}}^{2}\big)
\end{align*}
due to Lemma \ref{E0}, Poincar\'{e}'s inequality, Gagliardo--Nirenberg inequality, and
\begin{equation*}
  \||\mathbf{B}||\nabla\mathbf{B}|\|_{L^2}\leq \|\mathbf{B}\|_{L^4}\|\nabla\mathbf{B}\|_{L^4}
  \leq C\Big(\|\nabla\mathbf{B}\|_{L^2}^2
  +\|\nabla\mathbf{B}\|_{L^2}^\frac32\|\nabla\curl\mathbf{B}\|_{L^2}^\frac12\Big).
\end{equation*}

Next, using Cauchy--Schwarz, Poincar\'{e}'s, and H\"older's inequalities, we derive from Lemmas $\ref{log}$, $\ref{E0}$, and $\ref{l3.1}$ that
\begin{align}\label{3.10}
&\mathcal{I}_2=\int\rho \dot{\mathbf{u}}\cdot(\mathbf{u}\cdot\nabla)\mathbf{u}\mathrm{d}\mathbf{x}\notag\\
&\leq C\|\sqrt{\rho}\dot{\mathbf{u}}\|_{L^2}\|\sqrt{\rho}\mathbf{u}\|_{L^{4}}
\|\nabla\mathbf{u}\|_{L^{4}}\notag\\
&\leq C\|\sqrt{\rho}\dot{\mathbf{u}}\|_{L^2}
(1+\|\sqrt\rho\mathbf{u}\|_{L^2})^\frac12\|\nabla\mathbf{u}\|_{L^{2}}^\frac12
\ln^\frac14\big(2+\|\nabla\mathbf{u}\|_{L^{2}}^2\big)
\Big[\Big(\|\sqrt{\rho}\dot{\mathbf{u}}\|_{L^2}^{\frac{1}{2}}
+\||\mathbf{B}||\nabla\mathbf{B}|\|_{L^2}^{\frac{1}{2}}\Big)
\|\nabla\mathbf{u}\|_{L^2}^\frac{1}{2}+\|\nabla\mathbf{u}\|_{L^{2}}\notag\\
&\quad
+\frac{1}{2\mu+\lambda}\big(\|P-\bar{P}\|_{L^4}+\|\mathbf{B}\|_{L^{8}}^2\big)
+\frac{1}{2\mu+\lambda}\|P-\bar{P}\|_{L^2}^{\frac{1}{2}}\Big(
\|\sqrt{\rho}\dot{\mathbf{u}}\|_{L^2}^{\frac{1}{2}}
+\||\mathbf{B}||\nabla\mathbf{B}|\|_{L^2}^{\frac{1}{2}}+\|\nabla\mathbf{u}\|_{L^{2}}^\frac12\Big)
\Big]\notag\\
&\leq \frac{1}{8}\|\sqrt{\rho}\dot{\mathbf{u}}\|_{L^2}^2
+\frac{\nu^2}{8}\|\nabla\curl\mathbf{B}\|_{L^{2}}^2
+C\|\nabla\mathbf{B}\|_{L^2}^2\big(1+\|\nabla\mathbf{B}\|_{L^2}^2\big)
+\frac{C}{(2\mu+\lambda)^4}\|P-\bar{P}\|_{L^4}^4\notag\\
&\quad
+C(1+C_0)^2\|\nabla\mathbf{u}\|_{L^2}^2
\big(1+\|\nabla\mathbf{u}\|_{L^{2}}^2+\|\mathbf{B}\|_{L^{4}}^4\big)
\ln\big(2+\|\nabla\mathbf{u}\|_{L^2}^2\big),
\end{align}
where in the last inequality one has used
\begin{gather*}
\||\mathbf{B}||\nabla\mathbf{B}|\|_{L^2}\leq \|\mathbf{B}\|_{L^4}\|\nabla\mathbf{B}\|_{L^4}
\leq CC_0^\frac14
\Big(\|\nabla\mathbf{B}\|_{L^2}^\frac32
+\|\nabla\mathbf{B}\|_{L^2}\|\nabla\curl\mathbf{B}\|_{L^2}^\frac12\Big),\\
\|\mathbf{B}\|_{L^{8}}^2\leq\|\mathbf{B}\|_{L^{\infty}}\|\mathbf{B}\|_{L^{4}}
\leq C\|\mathbf{B}\|_{L^{4}}^\frac32\|\nabla\mathbf{B}\|_{L^{4}}^\frac12
\leq
C\|\mathbf{B}\|_{L^{4}}\Big(\|\nabla\mathbf{B}\|_{L^2}
+C_0^{\frac18}\|\nabla\mathbf{B}\|_{L^2}^\frac12
\|\nabla\curl\mathbf{B}\|_{L^2}^\frac14\Big).
\end{gather*}

For the terms $\mathcal{I}_3$ and $\mathcal{I}_4$, one infers from Gagliardo--Nirenberg and H\"older's inequalities that
\begin{align}\label{3.11}
 \mathcal{I}_3
 &\leq C\|\mathbf{B}\|_{L^\infty}\|\mathbf{B}_t\|_{L^2}\|\nabla\mathbf{u}\|_{L^2}\notag \\
& \leq CC_0^\frac18\Big(\|\nabla\mathbf{B}\|_{L^2}^\frac34
+\|\nabla\mathbf{B}\|_{L^2}^\frac12\|\nabla\curl\mathbf{B}\|_{L^2}^\frac14\Big)\|\mathbf{B}_t\|_{L^2}\|\nabla\mathbf{u}\|_{L^2}\notag\\
 &\leq \frac14\|\mathbf{B}_t\|_{L^2}^2+\frac{\nu^2}{8}\|\nabla\curl\mathbf{B}\|_{L^{2}}^2
 +C\|\nabla\mathbf{B}\|_{L^2}^2\big(1+\|\nabla\mathbf{B}\|_{L^2}^2\big)
 +CC_0^\frac12\|\nabla\mathbf{u}\|_{L^2}^4,\\ \label{3.12}
 \mathcal{I}_4&\leq C\big(\|\mathbf{B}\|_{L^\infty}^2\|\nabla\mathbf{u}\|_{L^2}^2
 +\|\mathbf{u}\|_{L^8}^2\|\nabla\mathbf{B}\|_{L^4}\|\nabla\mathbf{B}\|_{L^2}\big)\notag\\
 &\leq CC_0^{\frac14}\|\nabla\mathbf{u}\|_{L^2}^2
\Big(\|\nabla\mathbf{B}\|_{L^2}^\frac{3}{2}
+\|\nabla\mathbf{B}\|_{L^2}\|\nabla\curl\mathbf{B}\|_{L^2}^\frac{1}{2}\Big)
+C C_0^{\frac14}\|\nabla\mathbf{u}\|_{L^{2}}^\frac{3}{2}\Big(\|\nabla\mathbf{B}\|_{L^2}^2
+\|\nabla\mathbf{B}\|_{L^2}^\frac32\|\nabla\curl\mathbf{B}\|_{L^2}^\frac12\Big)\notag\\
 &\leq \frac{\nu^2}{8}\|\nabla\curl\mathbf{B}\|_{L^{2}}^2
 +C(1+C_0)\|\nabla\mathbf{u}\|_{L^2}^2\big(1+\|\nabla\mathbf{u}\|_{L^{2}}^2\big)
 +C\|\nabla\mathbf{B}\|_{L^2}^2\big(1+\|\nabla\mathbf{B}\|_{L^2}^2\big).
\end{align}

Hence, substituting \eqref{3.9}--\eqref{3.12} into \eqref{3.8}, one deduces from \eqref{E1} that
\begin{align}\label{3.13}
&\frac{1}{2}\frac{\mathrm{d}}{\mathrm{d}t}\mathcal{E}_1(t)
+\int\big(\rho |\dot{\mathbf{u}}|^2+\nu^2|\nabla\curl\mathbf{B}|^2+|\mathbf{B}_t|^2\big)\mathrm{d}\mathbf{x}\notag\\
&\leq
C(1+C_0)^2\big(\|\nabla\mathbf{u}\|_{L^2}^2+\|\curl\mathbf{B}\|_{L^2}^2\big)
\big(1+\|\nabla\mathbf{u}\|_{L^{2}}^2+\|\mathbf{B}\|_{L^4}^4\big)
\ln\big(2+\|\nabla\mathbf{u}\|_{L^2}^2\big)\notag\\
&\quad +C\|\curl\mathbf{B}\|_{L^2}^2\big(1+\|\curl\mathbf{B}\|_{L^2}^2\big)
+\frac{C}{(2\mu+\lambda)^2}\|P-\bar{P}\|_{L^4}^4,
\end{align}
where
\begin{equation}\label{3.14}
\mathcal{E}_1(t)\triangleq\int\big[(2\mu+\lambda)(\divv \mathbf{u})^2+\mu(\curl \mathbf{u})^2+2\nu(\curl \mathbf{B})^2-2P\divv\mathbf{u}-|\mathbf{B}|^2\divv
\mathbf{u}+2\mathbf{B}\cdot\nabla\mathbf{u}\cdot\mathbf{B}\big]\mathrm{d}\mathbf{x}.
\end{equation}
By Lemma \ref{Hodge}, one has that
\begin{equation*}
\bigg|\int(|\mathbf{B}|^2\divv
\mathbf{u}-2\mathbf{B}\cdot\nabla\mathbf{u}\cdot\mathbf{B})\mathrm{d}\mathbf{x}\bigg|\leq C\|\mathbf{B}\|_{L^4}^2\|\nabla\mathbf{u}\|_{L^2}
  \leq C\|\mathbf{B}\|_{L^4}^4+ \frac{\mu}{8}\big(\|\divv\mathbf{u}\|_{L^2}^2+\|\curl\mathbf{u}\|_{L^2}^2\big),
\end{equation*}
which along with \eqref{3.1} implies that there exists $D_1=D_1(a,\gamma,\Omega,\hat{\rho})>0$ such that
\begin{align}\label{3.15}
\mathcal{E}_1(t)\thicksim(2\mu+\lambda)\|\divv\mathbf{u}\|_{L^2}^2+\mu\|\curl\mathbf{u}\|_{L^2}^2
+2\nu\|\curl\mathbf{B}\|_{L^2}^2+\|\mathbf{B}\|_{L^4}^4
\end{align}
provided $\lambda\ge D_1$.

To address $\|\mathbf{B}\|_{L^4}^4$, multiplying $\eqref{a1}_3$ by $4|\mathbf{B}|^2\mathbf{B}$ and integrating the resultant over $\Omega$, one sees that
\begin{align}\label{3.16}
  \frac{\mathrm{d}}{\mathrm{d}t}\|\mathbf{B}\|_{L^4}^4&\leq
  -4\nu\||\mathbf{B}||\curl\mathbf{B}|\|_{L^2}^2
  +C\int|\curl\mathbf{B}||\mathbf{B}\cdot\nabla^\bot|\mathbf{B}|^2|\mathrm{d}\mathbf{x}
  +C\int|\mathbf{B}|^4|\nabla\mathbf{u}|\mathrm{d}\mathbf{x}\notag\\
  &\leq -2\nu\||\mathbf{B}||\curl\mathbf{B}|\|_{L^2}^2+C\int|\mathbf{B}|^2|\nabla\mathbf{B}|^2\mathrm{d}\mathbf{x}
  +C\int|\mathbf{B}|^4|\nabla\mathbf{u}|\mathrm{d}\mathbf{x}\notag\\
  &\leq -2\nu\||\mathbf{B}||\curl\mathbf{B}|\|_{L^2}^2+C\|\mathbf{B}\|_{L^{4}}^2\|\nabla\mathbf{B}\|_{L^{4}}^2
  +C\|\mathbf{B}\|_{L^{\infty}}^2\|\mathbf{B}\|_{L^{4}}^2\|\nabla\mathbf{u}\|_{L^{2}}\notag\\
&\leq-2\nu\||\mathbf{B}||\curl\mathbf{B}|\|_{L^2}^2
+\frac{\nu^2}{8}\|\nabla\curl\mathbf{B}\|_{L^{2}}^2
+C\big(1+\|\mathbf{B}\|_{L^4}^4\big)\big(\|\nabla\mathbf{u}\|_{L^2}^2
+\|\curl\mathbf{B}\|_{L^2}^2\big).
\end{align}
Setting
\begin{align*}
f_1(t)\triangleq2+\mathcal{E}_1(t),~~g_1(t)\triangleq (1+C_0)^2\big(\|\nabla\mathbf{u}\|_{L^2}^2+\|\curl \mathbf{B}\|_{L^2}^2\big)
+\frac{1}{(2\mu+\lambda)^2}\|P-\bar{P}\|_{L^4}^4,
\end{align*}
we thus deduce from \eqref{3.13}, \eqref{3.15}, \eqref{3.16}, and Lemma \ref{Hodge} that
\begin{equation*}
    f'_{1}(t)\leq Cg_{1}(t)f_{1}(t)\ln f_{1}(t),
\end{equation*}
and hence,
\begin{equation*}
    \big(\ln f_1(t)\big)'\leq Cg_1(t)\ln f_1(t).
\end{equation*}
This along with Gronwall's inequality, \eqref{3.1}, and Lemma \ref{l3.1}
implies that there is a positive constant $D_2=D_2(a,\gamma,\hat{\rho},\Omega,\nu,\mu)\geq D_1$ such that
\begin{equation}\label{3.17}
    \sup_{0\leq t\leq T}\big[(2\mu+\lambda)\|\divv \mathbf{u}\|_{L^2}^2+\mu\|\curl \mathbf{u}\|_{L^2}^2+\nu\|\curl \mathbf{B}\|_{L^2}^2+\|\mathbf{B}\|_{L^4}^4\big]
    \leq
    (2+M)^{\exp\big\{D_2(1+C_0)^3\big\}}
\end{equation}
provided $\lambda\ge D_2$.
Integrating \eqref{3.13} with respect to $t$ over $(0,T)$ together with \eqref{3.17} leads to
\begin{align*}
&\int_0^T\big(\|\sqrt{\rho}{\dot{\mathbf{u}}}\|_{L^2}^2+\|\nabla\curl \mathbf{B}\|_{L^2}^2+
\|\mathbf{B}_t\|_{L^2}^2\big)
\mathrm{d}t\notag\\
&\leq C(1+M)+C(1+C_0)^2C_0
(2+M)^{\exp\big\{D_2(1+C_0)^3\big\}}
\ln\bigg\{(2+M)^{\exp\big\{\frac{3}{2}D_2(1+C_0)^3\big\}}
\bigg\}\notag\\
&\leq (2+M)^{\exp\big\{\frac{7}{4}D_2(1+C_0)^3\big\}}
\end{align*}
provided that $\lambda$ satisfies \eqref{lam} with $D\geq D_2$,
which along with \eqref{3.17} concludes the proof.
\end{proof}

Next, we give the bound of $\frac{1}{2\mu+\lambda}\int_{0}^{T}\|P-\bar{P}\|_{L^4}^4\mathrm{d}t$.
\begin{lemma}\label{l3.3}
Let \eqref{3.1} be satisfied, then it holds that
\begin{align}\label{wwz1}
\frac{1}{2\mu+\lambda}\int_{0}^{T}\|P-\bar{P}\|_{L^4}^4\mathrm{d}t\leq
(2+M)^{\exp\big\{3D_2(1+C_0)^3\big\}}
\end{align}
provided that $\lambda$ satisfies \eqref{lam} with $D\geq 2D_2$.
\end{lemma}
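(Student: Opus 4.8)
The plan is to derive a closed differential inequality for $\|P-\bar{P}\|_{L^4}^4$ whose dissipation is proportional to $\tfrac{1}{2\mu+\lambda}\|P-\bar{P}\|_{L^4}^4$, and then integrate in time. First I would record the pressure transport equation: from $\eqref{a1}_1$ and $P=a\rho^\gamma$ one has $P_t+\mathbf{u}\cdot\nabla P+\gamma P\divv\mathbf{u}=0$, and since $(\mathbf{u}\cdot\mathbf{n})|_{\partial\Omega}=0$, averaging gives $\bar{P}_t=-(\gamma-1)\overline{P\divv\mathbf{u}}$, so that
\[
(P-\bar{P})_t+\mathbf{u}\cdot\nabla(P-\bar{P})+\gamma P\divv\mathbf{u}-(\gamma-1)\overline{P\divv\mathbf{u}}=0 .
\]
Multiplying by $(P-\bar{P})^3$, integrating over $\Omega$, integrating the transport term by parts (no boundary term, as $\mathbf{u}\cdot\mathbf{n}=0$), and substituting $\divv\mathbf{u}=\big(G+(P-\bar{P})\big)/(2\mu+\lambda)$ with $G$ from \eqref{2.9}, one obtains $\tfrac14\frac{\mathrm{d}}{\mathrm{d}t}\|P-\bar{P}\|_{L^4}^4$ together with the quintic contribution $\frac{4\gamma-1}{4(2\mu+\lambda)}\int(P-\bar{P})^5$, the good term $\frac{\gamma\bar{P}}{2\mu+\lambda}\|P-\bar{P}\|_{L^4}^4$, and the $G$- and $\overline{P\divv\mathbf{u}}$-remainders.

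The decisive step is extracting a clean dissipation in spite of the vacuum. The naive term $\frac{\gamma}{2\mu+\lambda}\int P(P-\bar{P})^4\ge0$ degenerates where $\rho=0$ and does not control $\|P-\bar{P}\|_{L^4}^4$. Instead I would use $(P-\bar{P})^5=P(P-\bar{P})^4-\bar{P}(P-\bar{P})^4$ to rewrite the indefinite quintic term; collecting coefficients, this converts it into a nonnegative $\frac{4\gamma-1}{4(2\mu+\lambda)}\int P(P-\bar{P})^4$ plus a reduction of the coefficient of $\|P-\bar{P}\|_{L^4}^4$ from $\frac{\gamma\bar{P}}{2\mu+\lambda}$ to $\frac{\bar{P}}{4(2\mu+\lambda)}$, giving
\[
\frac14\frac{\mathrm{d}}{\mathrm{d}t}\|P-\bar{P}\|_{L^4}^4+\frac{\bar{P}}{4(2\mu+\lambda)}\|P-\bar{P}\|_{L^4}^4+\frac{4\gamma-1}{4(2\mu+\lambda)}\int P(P-\bar{P})^4\,\mathrm{d}\mathbf{x}=\mathcal{R}.
\]
Since $\bar{P}=\fint a\rho^\gamma\ge a\bar{\rho}^\gamma>0$ is a fixed positive constant (Jensen and mass conservation), the second term is a genuine dissipation of $\|P-\bar{P}\|_{L^4}^4$ with coefficient $\sim 1/(2\mu+\lambda)$, exactly as \eqref{wwz1} requires.

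It then remains to control $\mathcal{R}=(\gamma-1)\overline{P\divv\mathbf{u}}\int(P-\bar{P})^3-\frac{\gamma}{2\mu+\lambda}\int PG(P-\bar{P})^3+\frac{1}{4(2\mu+\lambda)}\int G(P-\bar{P})^4$. For the two $G$-terms, $|P|,|P-\bar{P}|\le C$ (from $\rho\le2\hat{\rho}$) together with Hölder and Young bound them by $\frac{\varepsilon}{2\mu+\lambda}\|P-\bar{P}\|_{L^4}^4+\frac{C}{2\mu+\lambda}\|G\|_{L^4}^4$, the first part absorbed into the dissipation. The crux is that $\frac{1}{2\mu+\lambda}\int_0^T\|G\|_{L^4}^4\,\mathrm{d}t$ is bounded uniformly in $\lambda$ and in $T$: writing $\|G\|_{L^4}^4\le C(\|F\|_{L^4}^4+\|\mathbf{B}\|_{L^8}^8)$ and using \eqref{E5} with $p=4$, the factor $[(2\mu+\lambda)\|\divv\mathbf{u}\|_{L^2}+\|P-\bar{P}\|_{L^2}]^2$ is at most $C(2\mu+\lambda)$ times the $L^\infty_t$-bound $\sup_t(2\mu+\lambda)\|\divv\mathbf{u}\|_{L^2}^2$ of Lemma \ref{l3.2}; this single power of $(2\mu+\lambda)$ cancels the prefactor, after which $\int_0^T(\|\rho\dot{\mathbf{u}}\|_{L^2}^2+\|\nabla\mathbf{u}\|_{L^2}^2+\||\mathbf{B}||\nabla\mathbf{B}|\|_{L^2}^2)\,\mathrm{d}t$ and $\int_0^T\|\mathbf{B}\|_{L^8}^8\,\mathrm{d}t$ are bounded $T$-independently by Lemmas \ref{l3.1}--\ref{l3.2} (for the magnetic quantities one uses the clean Gagliardo--Nirenberg inequality of Lemma \ref{GN}, with vanishing constant since $\mathbf{B}\cdot\mathbf{n}|_{\partial\Omega}=0$, trading powers of $\mathbf{B}$ against $\int_0^T\|\nabla\mathbf{B}\|_{L^2}^2\,\mathrm{d}t$, itself controlled by the energy dissipation). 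For the average term I avoid any $T$-growth by keeping the square of $\overline{P\divv\mathbf{u}}$: from $|\overline{P\divv\mathbf{u}}|\le C\|\divv\mathbf{u}\|_{L^2}$ and Lemma \ref{l3.1} one gets $\int_0^T|\overline{P\divv\mathbf{u}}|^2\,\mathrm{d}t\le CC_0/(2\mu+\lambda)$, while the interpolation $|\int(P-\bar{P})^3|\le\|P-\bar{P}\|_{L^2}\|P-\bar{P}\|_{L^4}^2\le C\|P-\bar{P}\|_{L^4}^2$ exposes a factor $\|P-\bar{P}\|_{L^4}^2$; Young then splits this term into $\frac{\bar{P}}{8(2\mu+\lambda)}\|P-\bar{P}\|_{L^4}^4$ (absorbed) and $\frac{C(2\mu+\lambda)}{\bar{P}}|\overline{P\divv\mathbf{u}}|^2$, whose time integral is $\le CC_0/\bar{P}$ after the two powers of $(2\mu+\lambda)$ cancel. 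Integrating over $(0,T)$, discarding the nonnegative left-hand terms and using $\|P-\bar{P}\|_{L^4}^4|_{t=0}\le C$, bounds $\frac{1}{2\mu+\lambda}\int_0^T\|P-\bar{P}\|_{L^4}^4\,\mathrm{d}t$ by products of the Lemma \ref{l3.2} constant $(2+M)^{\exp\{2D_2(1+C_0)^3\}}$; choosing $\lambda$ as in \eqref{lam} with $D\ge2D_2$ collapses these products into $(2+M)^{\exp\{3D_2(1+C_0)^3\}}$, yielding \eqref{wwz1}.

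I expect the main obstacle to be the simultaneous uniformity in $\lambda$ and in $T$. The vacuum-compatible rearrangement producing the $\frac{\bar{P}}{4(2\mu+\lambda)}\|P-\bar{P}\|_{L^4}^4$ dissipation is the conceptual heart, but the genuinely fragile point is $\frac{1}{2\mu+\lambda}\int_0^T\|F\|_{L^4}^4\,\mathrm{d}t$: it hinges on the \emph{single} (not double) power of $(2\mu+\lambda)$ in \eqref{E5} cancelling the prefactor, and on estimating every magnetic quantity against the energy dissipation rather than its $L^\infty_t$ bound, so that no hidden factor of $T$ survives.
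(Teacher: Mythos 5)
Your argument follows the paper's route at every structural point: the renormalized pressure equation \eqref{3.18}, the substitution $\divv\mathbf{u}=\big(G+(P-\bar P)\big)/(2\mu+\lambda)$ to manufacture a dissipation proportional to $\tfrac{1}{2\mu+\lambda}\|P-\bar{P}\|_{L^4}^4$, and, crucially, the treatment of $\tfrac{1}{2\mu+\lambda}\int_0^T\|F\|_{L^4}^4\,\mathrm{d}t$ by the single-power cancellation in \eqref{E5} against $\sup_t(2\mu+\lambda)\|\divv\mathbf{u}\|_{L^2}^2$ from Lemma \ref{l3.2}; that last part of your write-up is exactly the paper's computation, and your identification of it as the fragile point is accurate.

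The genuine defect is your choice of multiplier. The paper multiplies \eqref{3.18} by $3(P-\bar{P})^2$, so the $(P-\bar{P})/(2\mu+\lambda)$ part of $\divv\mathbf{u}$ lands on an \emph{even} power and yields $\tfrac{3\gamma-1}{2\mu+\lambda}\int(P-\bar{P})^4\,\mathrm{d}\mathbf{x}$ directly, nonnegative with coefficient $3\gamma-1>2$ that is uniform in everything. You multiply by $(P-\bar{P})^3$ and get the indefinite quintic $\int(P-\bar{P})^5\,\mathrm{d}\mathbf{x}$, which you repair by writing $(P-\bar{P})^5=P(P-\bar{P})^4-\bar{P}(P-\bar{P})^4$; your algebra is correct and the surviving dissipation coefficient is $\tfrac{\bar{P}}{4(2\mu+\lambda)}$. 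But $\bar{P}$ is only bounded below by $a\bar{\rho}^{\gamma}$, and $\bar{\rho}$ is not among the quantities $D_2$ may depend on ($\Omega,\hat{\rho},a,\gamma,\nu,\mu$), nor is it controlled from below by $C_0$ or $M$ (take $\rho_0=\hat{\rho}\mathbf{1}_{B_\delta}$ with $\delta$ small: $C_0$ and $M$ stay bounded while $\bar{\rho}\to0$). Every absorption by Young into $\tfrac{\bar{P}}{4(2\mu+\lambda)}\|P-\bar{P}\|_{L^4}^4$, and the final division by that coefficient, therefore injects negative powers of $\bar{P}$ into your constant, so you do not obtain \eqref{wwz1} with the stated constant, and the degeneracy would propagate into the threshold \eqref{lam}. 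This cannot be patched within your framework: on the vacuum set $\big[(4\gamma-1)P+\bar{P}\big](P-\bar{P})^4=\bar{P}\cdot\bar{P}^4$ while $(P-\bar{P})^4=\bar{P}^4$, so the comparison of your retained dissipation with $\|P-\bar{P}\|_{L^4}^4$ genuinely costs a factor of $\bar{P}$. The fix is simply to test with $3(P-\bar{P})^2$ as the paper does; the rest of your argument, including the handling of the $G$-remainders and of $\overline{P\divv\mathbf{u}}$, then goes through with constants of the required form.
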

\begin{proof}
It follows from $\eqref{a1}_1$ and $P(\rho)=a\rho^\gamma$ that
\begin{equation*}
  P_t+\divv(P\mathbf{u})+(\gamma-1)P\divv\mathbf{u}=0,
\end{equation*}
which leads to
\begin{equation}\label{3.18}
  (P-\Bar{P})_t+\mathbf{u}\cdot\nabla(P-\Bar{P})+\gamma P\divv\mathbf{u}-(\gamma-1)\overline{P\divv\mathbf{u}}=0,
\end{equation}
where
\begin{equation*}
  \overline{P\divv\mathbf{u}}=\fint a\rho^\gamma\divv\mathbf{u}\mathrm{d}\mathbf{x}\leq C(a,\gamma,\hat{\rho},\Omega)\|\divv\mathbf{u}\|_{L^2}.
\end{equation*}

Multiplying $\eqref{3.18}$ by $3(P-\bar{P})^2$ and integrating the resulting equality over $\Omega$, one gets that
\begin{align*}
&\frac{3\gamma-1}{2\mu+\lambda}\left\|P-\bar{P}\right\|_{L^4}^4\notag\\
&=-\frac{\mathrm{d}}{\mathrm{d}t}\int(P-\bar{P})^3\mathrm{d}\mathbf{x}
-\frac{3\gamma-1}{2(2\mu+\lambda)}\int(P-\bar{P})^3\big(2F+|\mathbf{B}|^2\big)
\mathrm{d}\mathbf{x}
-3\gamma \bar{P}\int(P-\bar{P})^2\divv\mathbf{u}\mathrm{d}\mathbf{x}\notag\\
&\quad +3(\gamma-1)\overline{P\divv\mathbf{u}}\int(P-\bar{P})^2\mathrm{d}\mathbf{x}\notag\\
&\leq-\frac{\mathrm{d}}{\mathrm{d}t}\int(P-\bar{P})^3\mathrm{d}\mathbf{x}+
\frac{3\gamma-1}{4(2\mu+\lambda)}\left\|P-\bar{P}\right\|_{L^4}^4
+\frac{C}{2\mu+\lambda}\big(\|F\|_{L^4}^4+\|\mathbf{B}\|_{L^8}^8\big)
+C(2\mu+\lambda)\left\|\divv\mathbf{u}\right\|_{L^2}^2.
\end{align*}
Integrating the above inequality over $(0,T)$, it follows from Lemmas $\ref{l3.1}$, $\ref{l3.2}$, and \eqref{E5} that
\begin{align*}
&\frac{1}{2\mu+\lambda}\int_{0}^{T}\|P-\bar{P}\|_{L^{4}}^{4}\mathrm{d}t\notag\\
&\leq \frac{C}{2\mu+\lambda}\int_0^T
\big[(2\mu+\lambda)^2\|\divv\mathbf{u}\|_{L^2}^2+\|P-\bar{P}\|_{L^2}^2\big]
\big(\|\sqrt{\rho}\dot{\mathbf{u}}\|_{L^2}^2+\|\nabla \mathbf{u}\|_{L^2}^2+\||\mathbf{B}||\nabla\mathbf{B}|\|_{L^2}^2\big)\mathrm{d}t\notag\\
&\quad+C\sup_{0\leq t\leq T}\left\|P-\bar{P}\right\|_{L^3}^3
+\frac{CC_0^\frac12}{2\mu+\lambda}
\int_0^T\big[\|\mathbf{B}\|_{L^4}^4\|\nabla\mathbf{B}\|_{L^2}^2
\big(1+\|\nabla\curl\mathbf{B}\|_{L^2}^2\big)\big]\mathrm{d}t+C(1+C_0)\notag\\
&\leq(2+M)^{\exp\big\{3D_2(1+C_0)^3\big\}},
\end{align*}
as the desired \eqref{wwz1}.
\end{proof}

Motivated by \cite{Hoff95,Hoff95*,HWZ}, we then establish the time-weighted estimate for $\|\sqrt{\rho}{\dot{\mathbf{u}}}\|_{L^2}^2$ and $\|\nabla\curl \mathbf{B}\|_{L^2}^2$.
\begin{lemma}\label{l3.4}
Let \eqref{3.1} be satisfied, then it holds that
\begin{align}\label{3.19}
&\sup_{0\leq t\leq T}\big[\sigma\big(\|\sqrt{\rho}{\dot{\mathbf{u}}}\|_{L^2}^2+\|\nabla\curl \mathbf{B}\|_{L^2}^2+\|\mathbf{B}_t\|_{L^2}^2\big)\big]+(2\mu+\lambda)\int_0^T\sigma\|\divf \mathbf{u}\|_{L^2}^2\mathrm{d}t\notag\\
&\quad+\int_0^T\big(\mu\sigma\|\curl \dot{\mathbf{u}}\|_{L^2}^2+\nu\sigma\|\curl\mathbf{B}_t\|_{L^2}^2\big)\mathrm{d}t
\leq\exp\bigg\{(2+M)^{\exp\big\{4D_2(1+C_0)^3\big\}}\bigg\}
\end{align}
provided that $\lambda$ satisfies \eqref{lam} with $D\geq 3D_2$,
where $\divf\mathbf{u}\triangleq\divv\mathbf{u}_t+\mathbf{u}\cdot\nabla\divv\mathbf{u}$.
\end{lemma}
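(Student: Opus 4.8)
The plan is to run Hoff's higher-order energy scheme on the momentum equation in the form \eqref{3.6}, i.e.\ keeping $(2\mu+\lambda)\nabla\divv\mathbf{u}$ explicit rather than hiding it inside $F$, and to track the material derivative of the divergence. Concretely, I would apply the operator $\partial_t(\cdot)+\divv(\mathbf{u}\,\cdot)$ to the $j$-th component of \eqref{3.6}, multiply by $\sigma\dot{u}^j$, sum over $j$, and integrate over $\Omega$. Using $\eqref{a1}_1$, the left-hand side yields $\frac{\sigma}{2}\frac{\mathrm d}{\mathrm dt}\int\rho|\dot{\mathbf{u}}|^2\,\mathrm d\mathbf{x}$, while the viscous contribution, after the commutator identity
\begin{equation*}
\partial_t\partial_j\divv\mathbf{u}+\divv(\mathbf{u}\,\partial_j\divv\mathbf{u})
=\partial_j\divf\mathbf{u}-\partial_j u^k\,\partial_k\divv\mathbf{u}+(\partial_j\divv\mathbf{u})\divv\mathbf{u}
\end{equation*}
and one integration by parts, produces the good dissipation $(2\mu+\lambda)\sigma\|\divf\mathbf{u}\|_{L^2}^2$ (together with $\mu\sigma\|\curl\dot{\mathbf{u}}\|_{L^2}^2$ from the vorticity term) plus a boundary term proportional to $(2\mu+\lambda)\sigma\int_{\partial\Omega}\divf\mathbf{u}\,(\dot{\mathbf{u}}\cdot\mathbf{n})\,\mathrm ds$. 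This is precisely why one favors $\int(\divf\mathbf{u})^2$ over $\int(\divv\dot{\mathbf{u}})^2$: the coupling of $\lambda$ with $\divv\dot{\mathbf{u}}$ is avoided, at the cost of the extra quadratic term $-(2\mu+\lambda)\sigma\int\divf\mathbf{u}\,\partial_iu^k\partial_ku^i\,\mathrm d\mathbf{x}$ arising from $\divv\dot{\mathbf{u}}=\divf\mathbf{u}+\partial_iu^k\partial_ku^i$, which is absorbed into the dissipation by Young's inequality at the price of $\|\nabla\mathbf{u}\|_{L^4}^4$, controlled through \eqref{E6} and Lemma \ref{l3.2}.

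Next I would bound the interior right-hand side contributions. Applying the same operator to the pressure term $-\partial_jP$, to the magnetic terms $(\mathbf{B}\cdot\nabla\mathbf{B})^j-\tfrac12\partial_j|\mathbf{B}|^2$, and collecting the convective commutators, each resulting integral is estimated by H\"older's inequality, Gagliardo--Nirenberg (Lemma \ref{GN}), the Desjardins-type inequality (Lemma \ref{log}), and the elliptic bounds of Lemma \ref{E0}; the pressure evolution \eqref{3.18} is used to rewrite $P_t$, and every power of $\lambda$ is isolated through $\|F\|_{H^1}\le C(\|\nabla F\|_{L^2}+\|\nabla|\mathbf{B}|^2\|_{L^2}+\|\mathbf{B}\|_{L^4}^2)$ from \eqref{E4}, so that the estimation constants stay independent of $\lambda$. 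The surviving term $\frac{1}{(2\mu+\lambda)^2}\|P-\bar P\|_{L^4}^4$ is integrable in time thanks to the \emph{a priori} hypothesis \eqref{3.1} and Lemma \ref{l3.3}. Working with $\nabla^2\mathcal{P}\mathbf{u}$ in place of $\nabla^2\mathbf{u}$ via \eqref{E2} removes the boundary contributions attached to the divergence-free part of the velocity.

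The principal obstacle is the collection of boundary integrals, since $\rho$ is undefined on $\partial\Omega$ and the trace of $F$ enters at top order. I would \emph{not} estimate these terms one by one; instead I would use $(2\mu+\lambda)\divv\mathbf{u}=F+(P-\bar P)+\tfrac12|\mathbf{B}|^2$ to rewrite the boundary occurrences of $(2\mu+\lambda)\divf\mathbf{u}$, invoke the identity \eqref{2.12} giving $\dot{\mathbf{u}}\cdot\mathbf{n}=(\mathbf{u}\cdot\mathbf{n}^\bot)\,\mathbf{u}\cdot\nabla\mathbf{n}^\bot\cdot\mathbf{n}$ (quadratic in $\mathbf{u}$, hence tame), and apply the divergence theorem to turn surface integrals back into interior ones so that the genuinely dangerous pieces cancel. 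Two devices are essential: replacing $\int_{\partial\Omega}F_t\,\dot{\mathbf{u}}\cdot\mathbf{n}\,\mathrm ds$ by $\int_{\partial\Omega}(F-\bar P)_t\,\dot{\mathbf{u}}\cdot\mathbf{n}\,\mathrm ds$, which is legitimate because $-\int_{\partial\Omega}\bar P_t\,\dot{\mathbf{u}}\cdot\mathbf{n}\,\mathrm ds=(\gamma-1)\overline{P\divv\mathbf{u}}\int_{\partial\Omega}\dot{\mathbf{u}}\cdot\mathbf{n}\,\mathrm ds$ is harmless; and integrating by parts to trade the top-order trace of $F$ for $\|F\|_{H^1}$, bounded uniformly in $\lambda$ by \eqref{E4}. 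This circumvents the intractable terms $\int_{\partial\Omega}P\dot{\mathbf{u}}\cdot\nabla\mathbf{u}\cdot\mathbf{n}\,\mathrm ds$, $\int_{\partial\Omega}P_t\dot{\mathbf{u}}\cdot\mathbf{n}\,\mathrm ds$, and $\int\mathbf{B}\cdot\mathbf{B}_t\,\dot{\mathbf{u}}\cdot\mathbf{n}\,\mathrm ds$. I expect this boundary bookkeeping to be the hardest and most delicate step of the whole argument.

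For the magnetic field I would, in parallel, differentiate the induction equation $\eqref{a1}_3$ in time, multiply by $\sigma\mathbf{B}_t$, and integrate; the time-differentiated boundary conditions $\mathbf{B}_t\cdot\mathbf{n}=\curl\mathbf{B}_t=0$ yield the dissipation $\nu\sigma\|\curl\mathbf{B}_t\|_{L^2}^2$, while the sources $(\mathbf{B}\cdot\nabla\mathbf{u})_t,(\mathbf{u}\cdot\nabla\mathbf{B})_t,(\mathbf{B}\divv\mathbf{u})_t$ are handled by Gagliardo--Nirenberg together with Lemma \ref{l3.2}. The bound on $\sigma\|\nabla\curl\mathbf{B}\|_{L^2}^2$ then follows from the elliptic relation $\nu\Delta\mathbf{B}=\mathbf{B}_t+\mathbf{u}\cdot\nabla\mathbf{B}-\mathbf{B}\cdot\nabla\mathbf{u}+\mathbf{B}\divv\mathbf{u}$ with $\Delta\mathbf{B}=-\nabla^\bot\curl\mathbf{B}$. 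Adding the velocity and magnetic estimates, absorbing the dissipations, and treating the terms containing $\sigma'$ (bounded since $\sigma'\le1$) by the time integrals already secured in Lemmas \ref{l3.2}--\ref{l3.3}, I would obtain a Gronwall-type differential inequality for $\sigma\big(\|\sqrt{\rho}\dot{\mathbf{u}}\|_{L^2}^2+\|\nabla\curl\mathbf{B}\|_{L^2}^2+\|\mathbf{B}_t\|_{L^2}^2\big)$, whose integration yields \eqref{3.19} under the stated lower bound on $\lambda$.
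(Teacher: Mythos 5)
Your proposal follows essentially the same route as the paper: applying $\sigma\dot u^j[\partial_t+\divv(\mathbf{u}\,\cdot)]$ to \eqref{3.6}, extracting the dissipation $(2\mu+\lambda)\sigma\|\divf\mathbf{u}\|_{L^2}^2+\mu\sigma\|\curl\dot{\mathbf{u}}\|_{L^2}^2$, isolating $\lambda$ through $F$ and the $H^1$-bound \eqref{E4}, handling the boundary integrals via the $(F-\bar P)_t$ replacement and the divergence theorem, coupling with the time-differentiated induction equation, recovering $\sigma\|\nabla\curl\mathbf{B}\|_{L^2}^2$ from the elliptic relation, and closing with Gronwall. The approach and all the key devices match the paper's proof.
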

\begin{proof}
Operating $\sigma\dot{u}^j[\partial/\partial t+\divv({\mathbf{u}}\cdot)]$ on $\eqref{3.6}^j$, summing all the equalities with respect to $j$, and integrating the resultant over $\Omega$, we obtain that
\begin{align}\label{3.20}
&\frac{1}{2}\frac{\mathrm{d}}{\mathrm{d}t}\int\sigma\rho|\dot{\mathbf{u}}|^2\mathrm{d}\mathbf{x}
-\frac{\sigma'}{2}\int\rho|\dot{\mathbf{u}}|^2\mathrm{d}\mathbf{x}\notag\\
&=-\sigma\int\dot{u}^j\big[\partial_j P_{t}+\divv(\mathbf{u}\partial_{j}P)\big]\mathrm{d}\mathbf{x}
-\mu\sigma\int\dot{u}^j\big[(\nabla^\bot\curl \mathbf{u}_{t})^j+\divv\big(\mathbf{u}(\nabla^\bot\curl \mathbf{u})^{j}\big)\big]\mathrm{d}\mathbf{x}\notag\\
&\quad+(2\mu+\lambda)\sigma\int\dot{u}^j\big[\partial_j\divv\mathbf{u}_t  +\divv(\mathbf{u}\partial_j\divv\mathbf{u})\big]\mathrm{d}\mathbf{x}
-\sigma\int\dot{u}^j\big[\partial_j(B^iB^i_t)+\divv(B^i\partial_jB^i\mathbf{u})\big]
\mathrm{d}\mathbf{x}\notag\\
&\quad+\sigma\int\dot{u}^j\big[\partial_t(B^i\partial_iB^j)
+\divv(B^i\partial_iB^j\mathbf{u})\big]
\mathrm{d}\mathbf{x}
\triangleq \sum_{i=1}^{5}\mathcal{J}_i.
\end{align}

We next estimate each $\mathcal{J}_{i}$. Using Cauchy--Schwarz, Poincar\'{e}'s, and H\"older's inequalities, we have that
\begin{align}\label{3.21}
\mathcal{J}_{1}&=-\sigma\int_{\partial\Omega}P_{t}\dot{\mathbf{u}}\cdot \mathbf{n}\mathrm{d}s+\sigma\int P_{t}\divv\dot{\mathbf{u}}\mathrm{d}\mathbf{x}-\sigma\int\dot{\mathbf{u}}\cdot\nabla\divv(P\mathbf{u})\mathrm{d}\mathbf{x}
+\sigma\int\dot{u}^j\divv(P\partial_{j}\mathbf{u})\mathrm{d}\mathbf{x}\notag\\
&=-\sigma\int_{\partial\Omega}\big(P_{t}+\divv(P\mathbf{u})\big)\dot{\mathbf{u}}\cdot \mathbf{n}\mathrm{d}s
+\sigma\int \big(P_{t}+\divv(P\mathbf{u})\big)\divv\dot{\mathbf{u}}\mathrm{d}\mathbf{x}
+\sigma\int\dot{\mathbf{u}}\cdot\nabla\mathbf{u}\cdot\nabla P\mathrm{d}\mathbf{x}
\notag\\&\quad+\sigma\int P\dot{\mathbf{u}}\cdot\nabla\divv\mathbf{u}\mathrm{d}\mathbf{x}\notag\\
&=-\sigma\int_{\partial\Omega}\big(P_{t}+\divv(P\mathbf{u})\big)\dot{\mathbf{u}}\cdot \mathbf{n}\mathrm{d}s
+\sigma\int_{\partial\Omega} P\dot{\mathbf{u}}\cdot\nabla\mathbf{u}\cdot\mathbf{n}\mathrm{d}s
-\sigma\int(\gamma-1)P\divv\mathbf{u}\divv\dot{\mathbf{u}}\mathrm{d}\mathbf{x}
-\sigma\int P\dot{u}_j^iu_i^j \mathrm{d}\mathbf{x}
\notag\\
&\triangleq\mathcal{B}_1+\mathcal{B}_2
-\sigma(\gamma-1)\int P\divv\mathbf{u}\divv\dot{\mathbf{u}}\mathrm{d}\mathbf{x}
-\sigma\int P\dot{u}_j^iu_i^j \mathrm{d}\mathbf{x}\notag\\
&\leq \mathcal{B}_1+\mathcal{B}_2+\frac{\tilde{C}\sigma}{16}\|\nabla\dot{\mathbf{u}}\|_{L^2}^2+C\sigma\|\nabla\mathbf{u}\|_{L^2}^2,
\end{align}
where the positive constant $\tilde{C}=\tilde{C}(\mu,\Omega)$ will be determined later.
Integration by parts together with \eqref{a3} and \eqref{2.12} gives that
\begin{align}\label{3.22}
\mathcal{J}_2&=-\mu\sigma\int\dot{\mathbf{u}}\cdot\nabla^\bot\curl \mathbf{u}_{t}\mathrm{d}\mathbf{x}
-\mu\sigma\int\dot{u}^j\divv(\mathbf{u}\nabla^\bot_j\curl \mathbf{u})\mathrm{d}\mathbf{x}\notag\\
&=-\mu\sigma\int_{\partial\Omega}\curl\mathbf{u}_t
\dot{\mathbf{u}}\cdot\mathbf{n}^\bot\mathrm{d}s
-\mu\sigma\int\curl\dot{\mathbf{u}}\curl\mathbf{u}_t\mathrm{d}\mathbf{x}
-\mu\sigma\int\dot{u}^j\big[\nabla^\bot_j\divv(\mathbf{u}\curl \mathbf{u})-\divv(\nabla^\bot_j\mathbf{u}\curl \mathbf{u})\big]\mathrm{d}\mathbf{x}\notag\\
&=-\mu\sigma\int_{\partial\Omega}\curl\mathbf{u}_t
\dot{\mathbf{u}}\cdot\mathbf{n}^\bot\mathrm{d}s
-\mu\sigma\int\big[(\curl\dot{\mathbf{u}})^2-\curl\dot{\mathbf{u}}
\curl(\mathbf{u}\cdot\nabla\mathbf{u})\big]\mathrm{d}\mathbf{x}
-\mu\sigma\int_{\partial\Omega}\divv(\mathbf{u}\curl\mathbf{u})
\dot{\mathbf{u}}\cdot\mathbf{n}^\bot\mathrm{d}s
\notag\\
&\quad
-\mu\sigma\int\curl\dot{\mathbf{u}}\divv(\mathbf{u}\curl\mathbf{u})
\mathrm{d}\mathbf{x}
+\mu\sigma\int_{\partial\Omega}(\curl\mathbf{u})\dot{u}^j\nabla^\bot_j
\mathbf{u}\cdot\mathbf{n}\mathrm{d}s
-\mu\sigma\int\nabla\dot{u}^j\cdot\nabla^\bot_j\mathbf{u}\curl\mathbf{u}\mathrm{d}\mathbf{x}\notag\\
&=-\mu\sigma\int(\curl\dot{\mathbf{u}})^2\mathrm{d}\mathbf{x}
-\mu\sigma\int\nabla\dot{u}^j\cdot\nabla^\bot_j\mathbf{u}
\curl\mathbf{u}\mathrm{d}\mathbf{x}\notag\\
&\leq-\mu\sigma\|\curl\dot{\mathbf{u}}\|_{L^2}^2
+\frac{\tilde{C}\sigma}{16}\|\nabla\dot{\mathbf{u}}\|_{L^2}^2
+C\sigma\|\nabla\mathbf{u}\|_{L^4}^4\notag\\
&\leq-\mu\sigma\|\curl\dot{\mathbf{u}}\|_{L^2}^2
+C\sigma\big(1+\|\nabla\mathbf{u}\|_{L^2}^2\big)
\big(\|\sqrt{\rho}\dot{\mathbf{u}}\|_{L^2}^2+\|\nabla\mathbf{u}\|_{L^2}^2\big)
+\frac{C}{(2\mu+\lambda)^4}\|P-\bar{P}\|_{L^4}^4\notag\\
&\quad+
C\sigma(1+C_0)^2\bigg(1+\frac{\|\nabla\mathbf{B}\|_{L^2}^2}{(2\mu+\lambda)^4}\bigg)
\|\nabla\mathbf{B}\|_{L^2}^2\big(1+\|\nabla\mathbf{B}\|_{L^2}^2\big)
\|\nabla\curl\mathbf{B}\|_{L^2}^2,
\end{align}
where in the fourth equality we have used the fact that $\curl(\mathbf{u}\cdot\nabla\mathbf{u})=\divv(\mathbf{u}\curl\mathbf{u})$.
To address $\mathcal{J}_3$, we observe that
\begin{align}\label{3.23}
\mathcal{J}_3
&=(2\mu+\lambda)\sigma\int\dot{u}^j\big[\partial_j\divv\mathbf{u}_t+ \partial_j\divv(\mathbf{u}\divv\mathbf{u})-\divv(\partial_j\mathbf{u}\divv\mathbf{u})\big]
\mathrm{d}\mathbf{x}\notag\\
&=(2\mu+\lambda)\sigma\int_{\partial\Omega}
[\divv\mathbf{u}_t+\divv(\mathbf{u}\divv\mathbf{u})]\dot{\mathbf{u}}\cdot\mathbf{n}\mathrm{d}s
-(2\mu+\lambda)\sigma\int\divv\dot{\mathbf{u}}
[\divv\mathbf{u}_t+\divv(\mathbf{u}\divv\mathbf{u})]\mathrm{d}\mathbf{x}\notag\\
&\quad
-(2\mu+\lambda)\sigma\int\dot{u}^j\divv(\partial_j\mathbf{u}\divv\mathbf{u})\mathrm{d}\mathbf{x}\notag\\
&=-(2\mu+\lambda)\sigma\int\big(\divv\mathbf{u}_t+\mathbf{u}\cdot\nabla\divv\mathbf{u}+u_j^iu_i^j\big)
\big[\divv\mathbf{u}_t+\mathbf{u}\cdot\nabla\divv\mathbf{u}+(\divv\mathbf{u})^2\big]\mathrm{d}\mathbf{x}
\notag\\&\quad
-(2\mu+\lambda)\sigma\int\dot{u}^j\divv(\partial_j\mathbf{u}\divv\mathbf{u})\mathrm{d}\mathbf{x}
+(2\mu+\lambda)\sigma\int_{\partial\Omega}[\divv\mathbf{u}_t+\divv(\mathbf{u}\divv\mathbf{u})]\dot{\mathbf{u}}\cdot\mathbf{n}\mathrm{d}s\notag\\
&=-(2\mu+\lambda)\sigma\int\big[(\divf\mathbf{u})^2
+\divf\mathbf{u}(\divv\mathbf{u})^2+u_j^iu_i^j(\divv\mathbf{u})^2
-\partial_j\mathbf{u}\cdot\nabla\dot{u}^j\divv\mathbf{u}+u_j^iu_i^j\divf\mathbf{u}\big]\mathrm{d}\mathbf{x}\notag\\
&\quad
+(2\mu+\lambda)\sigma\int_{\partial\Omega}\divv\mathbf{u}_t\dot{\mathbf{u}}\cdot\mathbf{n}\mathrm{d}s+
(2\mu+\lambda)\sigma\int_{\partial\Omega}\divv(\mathbf{u}\divv\mathbf{u})\dot{\mathbf{u}}\cdot\mathbf{n}\mathrm{d}s
-(2\mu+\lambda)\sigma\int_{\partial\Omega}(\divv\mathbf{u})\dot{\mathbf{u}}\cdot\nabla\mathbf{u}\cdot\mathbf{n}\mathrm{d}s\notag\\
&\triangleq-(2\mu+\lambda)\sigma\|\divf\mathbf{u}\|_{L^2}^2+\sum_{i=1}^4\mathcal{J}_{3i}+\mathcal{B}_3+\mathcal{B}_4+\mathcal{B}_5.
\end{align}

We shall bound each $\mathcal{J}_{3i}$. It follows from Gagliardo--Nirenberg inequality, H\"older's inequality, Lemma $\ref{E0}$, and Lemma $\ref{l3.1}$ that
\begin{align}\label{3.24}
\mathcal{J}_{31}&=-\frac{\sigma}{2\mu+\lambda}\int\divf\mathbf{u}
\Big(F+P-\bar{P}+\frac12|\mathbf{B}|^2\Big)^2\mathrm{d}\mathbf{x}\notag\\
&\leq\frac{C\sigma}{2\mu+\lambda}\|\divf\mathbf{u}\|_{L^2}
\big(\|F\|_{L^4}^2+\|P-\bar{P}\|_{L^4}^2+\|\mathbf{B}\|_{L^\infty}^2
\|\mathbf{B}\|_{L^4}^2\big)\notag\\
&\leq\frac{C\sigma}{2\mu+\lambda}\|\divf\mathbf{u}\|_{L^2}
\Big[\big((2\mu+\lambda)\|\divv\mathbf{u}\|_{L^2}+\|P-\bar{P}\|_{L^2}\big)
\big(\|\sqrt{\rho}\dot{\mathbf{u}}\|_{L^2}+\|\nabla \mathbf{u}\|_{L^2}+\||\mathbf{B}||\nabla\mathbf{B}|\|_{L^2}\big)
\notag\\
&\quad+\|P-\bar{P}\|_{L^4}^2+C_0^\frac34\|\nabla\mathbf{B}\|_{L^2}^2
\Big(\|\nabla\mathbf{B}\|_{L^2}^\frac12+\|\nabla\curl\mathbf{B}\|_{L^2}^\frac12\Big)\Big]
\notag\\
&\leq\frac{(2\mu+\lambda)\sigma}{16}\|\divf\mathbf{u}\|_{L^2}^2
+C\sigma\big(1+C_0+\|\nabla\mathbf{u}\|_{L^2}^2+C_0\|\nabla\mathbf{B}\|_{L^2}^2\big)
\big(\|\sqrt{\rho}\dot{\mathbf{u}}\|_{L^2}^2+\|\nabla\mathbf{u}\|_{L^2}^2\big)\notag\\
&\quad+C\sigma(1+C_0)^2\|\nabla\mathbf{B}\|_{L^2}^4\|\nabla\curl\mathbf{B}\|_{L^2}^2
+\frac{C}{(2\mu+\lambda)^3}\|P-\bar{P}\|_{L^4}^4.
\end{align}
According to the Hodge-type decomposition, we conclude from Lemma $\ref{E0}$ that
\begin{align}\label{3.25}
\mathcal{J}_{32}&=-(2\mu+\lambda)\sigma\int u_j^iu_i^j(\divv\mathbf{u})^2\mathrm{d}\mathbf{x}\notag\\
&\leq C(2\mu+\lambda)\sigma\int\big(|\nabla\mathcal{P}\mathbf{u}|^2+|\nabla\mathcal{Q}\mathbf{u}|^2\big)(\divv\mathbf{u})^2\mathrm{d}\mathbf{x}\notag\\
&\leq C(2\mu+\lambda)^2\sigma\|\divv\mathbf{u}\|_{L^4}^4+C\sigma\|\nabla\mathcal{P}\mathbf{u}\|_{L^4}^4\notag\\
&\leq\frac{C\sigma}{(2\mu+\lambda)^2}\big(\|F\|_{L^4}^4+\|P-\bar{P}\|_{L^4}^4
+\||\mathbf{B}|^2\|_{L^4}^4\big)
+C\sigma\|\nabla\mathbf{u}\|_{L^{2}}^{2}\big(\|\sqrt{\rho}\dot{\mathbf{u}}\|_{L^{2}}^{2}+\|\nabla\mathbf{u}\|_{L^{2}}^{2}
+\|\mathbf{B}\cdot\nabla\mathbf{B}\|_{L^2}^2\big)\notag\\
&\leq C\sigma\big(1+C_0+\|\nabla\mathbf{u}\|_{L^2}^2+C_0\|\nabla\mathbf{B}\|_{L^2}^2\big)
\big(\|\sqrt{\rho}\dot{\mathbf{u}}\|_{L^2}^2+\|\nabla\mathbf{u}\|_{L^2}^2\big)
+C\sigma(1+C_0)^3\|\nabla\mathbf{B}\|_{L^2}^4\|\nabla\curl\mathbf{B}\|_{L^2}^2\notag\\
&\quad+\frac{C}{(2\mu+\lambda)^2}\|P-\bar{P}\|_{L^4}^4,\\
\mathcal{J}_{33}&=\sigma\int\partial_j\mathbf{u}\cdot\nabla\dot{u}^j
\Big(F+P-\bar{P}+\frac12|\mathbf{B}|^2\Big)\mathrm{d}\mathbf{x}\notag\\
&\leq
C\sigma\big[\|\nabla\mathbf{u}\|_{L^4}\|\nabla\dot{\mathbf{u}}\|_{L^2}
(\|F\|_{L^4}+\|\mathbf{B}\|_{L^\infty}\|\mathbf{B}\|_{L^4})
+\|\nabla\mathbf{u}\|_{L^2}\|\nabla\dot{\mathbf{u}}\|_{L^2}\|P-\bar{P}\|_{L^\infty}\big]\notag\\
&\leq C\sigma\big[\|\nabla\mathbf{u}\|_{L^4}\|\nabla\dot{\mathbf{u}}\|_{L^2}
(\|\sqrt{\rho}\dot{\mathbf{u}}\|_{L^2}+\|\nabla \mathbf{u}\|_{L^2}+\||\mathbf{B}||\nabla\mathbf{B}|\|_{L^2}+\|\mathbf{B}\|_{L^\infty}\|\mathbf{B}\|_{L^4})
+\|\nabla\mathbf{u}\|_{L^2}\|\nabla\dot{\mathbf{u}}\|_{L^2}\big]\notag\\
&\leq
\frac{\tilde{C}\sigma}{16}\|\nabla \dot{\mathbf{u}}\|_{L^2}^2
+C\sigma \big(1+\|\sqrt{\rho}\dot{\mathbf{u}}\|_{L^2}^2+\|\nabla\mathbf{u}\|_{L^2}^2\big)
\big(\|\sqrt{\rho}\dot{\mathbf{u}}\|_{L^2}^2+\|\nabla\mathbf{u}\|_{L^2}^2\big)
+\frac{C}{(2\mu+\lambda)^2}\|P-\bar{P}\|_{L^4}^4\notag\\
&\quad+
C\sigma(1+C_0)^2\bigg(1+\frac{\|\nabla\mathbf{B}\|_{L^2}^2}{(2\mu+\lambda)^4}\bigg)\|\nabla\mathbf{B}\|_{L^2}^2
\big(1+\|\nabla\mathbf{B}\|_{L^2}^2\big)\|\nabla\curl\mathbf{B}\|_{L^2}^2.\label{3.26}
\end{align}
For $\mathcal{J}_{34}$, one gets from the Hodge-type decomposition that
\begin{align}\label{3.27}
\mathcal{J}_{34}
&=-(2\mu+\lambda)\sigma\int\divf\mathbf{u}(\mathcal{Q}\mathbf{u}+\mathcal{P}\mathbf{u})_j^i(\mathcal{Q}\mathbf{u}
+\mathcal{P}\mathbf{u})_i^j\mathrm{d}\mathbf{x}\notag\\
&=-(2\mu+\lambda)\sigma\int\divf\mathbf{u}\big[(\mathcal{Q}\mathbf{u})_j^i(\mathcal{Q}\mathbf{u})_i^j+
(\mathcal{Q}\mathbf{u})_j^i(\mathcal{P}\mathbf{u})_i^j+(\mathcal{P}\mathbf{u})_j^i(\mathcal{Q}\mathbf{u})_i^j
+(\mathcal{P}\mathbf{u})_j^i(\mathcal{P}\mathbf{u})_i^j\big]\mathrm{d}\mathbf{x}\notag\\
&\leq-(2\mu+\lambda)\sigma\int\divf\mathbf{u}(\mathcal{P}\mathbf{u})_j^i(\mathcal{P}\mathbf{u})_i^j\mathrm{d}\mathbf{x}
+\frac{(2\mu+\lambda)\sigma}{16}\|\divf\mathbf{u}\|_{L^2}^2
+C(2\mu+\lambda)\sigma\int|\nabla\mathbf{u}|^2
|\nabla\mathcal{Q}\mathbf{u}|^2\mathrm{d}\mathbf{x}\notag\\
&\leq-(2\mu+\lambda)\sigma\int\divf\mathbf{u}(\mathcal{P}\mathbf{u})_j^i(\mathcal{P}\mathbf{u})_i^j\mathrm{d}\mathbf{x}
+\frac{(2\mu+\lambda)\sigma}{16}\|\divf\mathbf{u}\|_{L^2}^2
+C\sigma\|\nabla\mathbf{u}\|_{L^4}^4
+C(2\mu+\lambda)^2\sigma\|\divv\mathbf{u}\|_{L^4}^4\notag\\
&\leq-\sigma\int F_t(\mathcal{P}\mathbf{u})_j^i(\mathcal{P}\mathbf{u})_i^j\mathrm{d}\mathbf{x}
+\frac{(2\mu+\lambda)\sigma}{16}\|\divf\mathbf{u}\|_{L^2}^2
+C\sigma(1+C_0)^3\|\nabla\mathbf{B}\|_{L^2}^4\big(\|\nabla\curl\mathbf{B}\|_{L^2}^2
+\|\mathbf{B}_t\|_{L^2}^2\big)\notag\\
&\quad
+C\sigma\big(1+C_0+\|\nabla\mathbf{u}\|_{L^{2}}^{2}+C_0\|\nabla\mathbf{B}\|_{L^{2}}^{2}\big)
\big(\|\sqrt{\rho}\dot{\mathbf{u}}\|_{L^{2}}^{2}+\|\nabla\mathbf{u}\|_{L^{2}}^{2}
+\|\nabla\mathbf{B}\|_{L^{2}}^{2}\big)
\big(1+\|\sqrt{\rho}\dot{\mathbf{u}}\|_{L^{2}}^{2}\big)\notag\\
&\quad+\frac{C}{(2\mu+\lambda)^2}\|P-\bar{P}\|_{L^4}^4,
\end{align}
where the final step relies on the following estimate
\begin{align*}
&-(2\mu+\lambda)\sigma\int(\divv\mathbf{u}_t+\mathbf{u}\cdot\nabla\divv\mathbf{u})(\mathcal{P}\mathbf{u})_j^i(\mathcal{P}\mathbf{u})_i^j\mathrm{d}\mathbf{x}\notag\\
&=-\sigma\int\big[(P-\bar{P})_t+\mathbf{u}\cdot\nabla(P-\bar{P})+\mathbf{B}\cdot\mathbf{B}_t\big](\mathcal{P}\mathbf{u})_j^i(\mathcal{P}\mathbf{u})_i^j\mathrm{d}\mathbf{x}
-\sigma\int F_t(\mathcal{P}\mathbf{u})_j^i(\mathcal{P}\mathbf{u})_i^j\mathrm{d}\mathbf{x}\notag\\
&\quad-\sigma\int\mathbf{u}\cdot\nabla\Big(F+\frac12|\mathbf{B}|^2\Big)(\mathcal{P}\mathbf{u})_{j}^{i}(\mathcal{P}\mathbf{u})_{i}^{j}\mathrm{d}\mathbf{x}
\notag\\
&=\sigma\int\big[\gamma P\divv\mathbf{u}-(\gamma-1)\overline{P\divv\mathbf{u}}-\mathbf{B}\cdot\mathbf{B}_t\big](\mathcal{P}\mathbf{u})_j^i(\mathcal{P}\mathbf{u})_i^j\mathrm{d}\mathbf{x}
-\sigma\int\mathbf{u}\cdot\nabla\Big(F+\frac12|\mathbf{B}|^2\Big)
(\mathcal{P}\mathbf{u})_{j}^{i}(\mathcal{P}\mathbf{u})_{i}^{j}\mathrm{d}\mathbf{x}
\notag\\
&\quad
-\sigma\int F_t(\mathcal{P}\mathbf{u})_j^i(\mathcal{P}\mathbf{u})_i^j\mathrm{d}\mathbf{x}\notag\\
&\leq C\sigma\big(\|\nabla\mathcal{P}\mathbf{u}\|_{L^4}^2\|\nabla\mathbf{u}\|_{L^2}
+\|\nabla\mathcal{P}\mathbf{u}\|_{L^8}^2\|\mathbf{B}\|_{L^4}\|\mathbf{B}_t\|_{L^2}\big)
+C\sigma\|\mathbf{u}\|_{L^4}\|\nabla\mathcal{P}\mathbf{u}\|_{L^8}^2\big(\|\nabla F\|_{L^2}+\|\mathbf{B}\|_{L^\infty}\|\nabla \mathbf{B}\|_{L^2}\big)
\notag\\
&\quad
-\sigma\int F_t(\mathcal{P}\mathbf{u})_j^i(\mathcal{P}\mathbf{u})_i^j\mathrm{d}\mathbf{x}\notag\\
&\leq
C\sigma\big(1+C_0+\|\nabla\mathbf{u}\|_{L^{2}}^{2}+C_0\|\nabla\mathbf{B}\|_{L^{2}}^{2}\big)
\big(\|\sqrt{\rho}\dot{\mathbf{u}}\|_{L^{2}}^{2}+\|\nabla\mathbf{u}\|_{L^{2}}^{2}
+\|\nabla\mathbf{B}\|_{L^{2}}^{2}\big)
\big(1+\|\sqrt{\rho}\dot{\mathbf{u}}\|_{L^{2}}^{2}\big)\notag\\
&\quad+C\sigma(1+C_0)^2\|\nabla\mathbf{B}\|_{L^2}^4
\big(\|\nabla\curl\mathbf{B}\|_{L^2}^2+\|\mathbf{B}_t\|_{L^2}^2\big)
-\sigma\int F_t(\mathcal{P}\mathbf{u})_j^i(\mathcal{P}\mathbf{u})_i^j\mathrm{d}\mathbf{x}
\end{align*}
due to \eqref{3.18} and Lemma \ref{E0}. Indeed, the term involving $F_t$ in \eqref{3.27}
can be handled via Poincar\'{e}'s inequality and \eqref{E4} as follows:
\begin{align}\label{3.28}
&-\sigma\int F_t
(\mathcal{P}\mathbf{u})_{j}^{i}(\mathcal{P}\mathbf{u})_{i}^{j}\mathrm{d}\mathbf{x}\notag\\
&=-\frac{\mathrm{d}}{\mathrm{d}t}\int\sigma F
(\mathcal{P}\mathbf{u})_{j}^{i}(\mathcal{P}\mathbf{u})_{i}^{j}\mathrm{d}\mathbf{x}
+\sigma'\int F
(\mathcal{P}\mathbf{u})_{j}^{i}(\mathcal{P}\mathbf{u})_{i}^{j}\mathrm{d}\mathbf{x}
+\sigma\int F
(\mathcal{P}\mathbf{u})_{jt}^{i}(\mathcal{P}\mathbf{u})_{i}^{j}\mathrm{d}\mathbf{x}
+\sigma\int F
(\mathcal{P}\mathbf{u})_{j}^{i}(\mathcal{P}\mathbf{u})_{it}^{j}\mathrm{d}\mathbf{x}\notag\\
&=-\frac{\mathrm{d}}{\mathrm{d}t}\int\sigma F
(\mathcal{P}\mathbf{u})_{j}^{i}(\mathcal{P}\mathbf{u})_{i}^{j}\mathrm{d}\mathbf{x}
+\sigma'\int F
(\mathcal{P}\mathbf{u})_{j}^{i}(\mathcal{P}\mathbf{u})_{i}^{j}\mathrm{d}\mathbf{x}
+\sigma\int F
\big(\mathcal{P}(\dot{\mathbf{u}}-\mathbf{u}\cdot\nabla\mathbf{u})\big)_{j}^{i}(\mathcal{P}\mathbf{u})_{i}^{j}\mathrm{d}\mathbf{x}\notag\\
&\quad+\sigma\int F
(\mathcal{P}\mathbf{u})_{j}^{i}\big(\mathcal{P}
(\dot{\mathbf{u}}-\mathbf{u}\cdot\nabla\mathbf{u})\big)_{i}^{j}\mathrm{d}\mathbf{x}\notag\\
&\leq
C\|F\|_{L^2}\|\nabla\mathcal{P}\mathbf{u}\|_{L^4}^2
+C\sigma\|F\|_{L^4}\|\nabla\mathcal{P}\mathbf{u}\|_{L^4}
\big(\|\nabla\dot{\mathbf{u}}\|_{L^2}
+\|\nabla\mathbf{u}\|_{L^4}^2
+\|\mathbf{u}\|_{L^\infty}\|\nabla^2\mathcal{P}\mathbf{u}\|_{L^2}
\big)\notag\\
&\quad
-\frac{\mathrm{d}}{\mathrm{d}t}\int\sigma F
(\mathcal{P}\mathbf{u})_{j}^{i}(\mathcal{P}\mathbf{u})_{i}^{j}\mathrm{d}\mathbf{x}\notag\\
&\leq-\frac{\mathrm{d}}{\mathrm{d}t}\int\sigma F
(\mathcal{P}\mathbf{u})_{j}^{i}(\mathcal{P}\mathbf{u})_{i}^{j}\mathrm{d}\mathbf{x}
+\frac{\tilde{C}\sigma}{16}\|\nabla \dot{\mathbf{u}}\|_{L^2}^2
+C\sigma(1+C_0)^3\|\nabla\mathbf{B}\|_{L^2}^4\|\nabla\curl\mathbf{B}\|_{L^2}^2
+\frac{C}{(2\mu+\lambda)^2}\|P-\bar{P}\|_{L^4}^4\notag\\
&\quad
+C\big(1+C_0+\|\nabla\mathbf{u}\|_{L^2}^2+C_0\|\nabla\mathbf{B}\|_{L^2}^2\big)
\big(1+\sigma\|\sqrt{\rho}\dot{\mathbf{u}}\|_{L^{2}}^{2}+\|\nabla\mathbf{u}\|_{L^{2}}^{2}\big)
\big(\|\sqrt{\rho}\dot{\mathbf{u}}\|_{L^2}^2+\|\nabla\mathbf{u}\|_{L^2}^2\big),
\end{align}
where one has used $0\leq\sigma, \sigma'\leq1$ for $t>0$.

Thus, substituting \eqref{3.24}--\eqref{3.28} into \eqref{3.23}, we obtain that
\begin{align}\label{3.29}
\mathcal{J}_{3}
&\leq-\frac{\mathrm{d}}{\mathrm{d}t}\int\sigma F
(\mathcal{P}\mathbf{u})_{j}^{i}(\mathcal{P}\mathbf{u})_{i}^{j}\mathrm{d}\mathbf{x}
-\frac{3(2\mu+\lambda)\sigma}{4}\|\divf\mathbf{u}\|_{L^2}^2
+\frac{\tilde{C}\sigma}{4}\|\nabla \dot{\mathbf{u}}\|_{L^2}^2+\mathcal{B}_3+\mathcal{B}_4+\mathcal{B}_5\notag\\
&\quad+C\big(1+C_0+\|\nabla\mathbf{u}\|_{L^2}^2+C_0\|\nabla\mathbf{B}\|_{L^2}^2\big)
\big(1+\sigma\|\sqrt{\rho}\dot{\mathbf{u}}\|_{L^{2}}^{2}+\|\nabla\mathbf{u}\|_{L^{2}}^{2}\big)
\big(\|\sqrt{\rho}\dot{\mathbf{u}}\|_{L^2}^2+\|\nabla\mathbf{u}\|_{L^2}^2
+\|\nabla\mathbf{B}\|_{L^2}^2\big)
\notag\\
&\quad+C\sigma(1+C_0)^3\|\nabla\mathbf{B}\|_{L^2}^4
\big(\|\nabla\curl\mathbf{B}\|_{L^2}^2+\|\mathbf{B}_t\|_{L^2}^2\big)
+\frac{C}{(2\mu+\lambda)^2}\|P-\bar{P}\|_{L^4}^4.
\end{align}
For the last terms $\mathcal{J}_{4}$ and $\mathcal{J}_{5}$, one deduces from Gagliardo--Nirenberg inequality and Lemma \ref{ldot} that
\begin{align}\label{3.30}
 &\,\mathcal{J}_{4}+\mathcal{J}_{5}\notag\\
 &=-\sigma\int\dot{u}^j\big[\partial_j(B^iB^i_t)+\divv(B^i\partial_jB^i\mathbf{u})\big]
\mathrm{d}\mathbf{x}
+\sigma\int\dot{u}^j\big[\partial_t(B^i\partial_iB^j)+\divv\big(B^i\partial_iB^j\mathbf{u}\big)\big]
\mathrm{d}\mathbf{x}
\notag\\
&=\sigma\int\dot{u}^j\big[\partial_t(B^i\partial_iB^j)-\partial_j(B^iB^i_t)\big]\mathrm{d}\mathbf{x}+
\sigma\int\big(\partial_k\dot{u}^jB^i\partial_jB^iu^k
-\partial_k\dot{u}^jB^i\partial_iB^ju^k\big)\mathrm{d}\mathbf{x}
\notag\\
&\leq C\sigma\|\dot{\mathbf{u}}\|_{L^4}\|\mathbf{B}_t\|_{L^4}\|\nabla\mathbf{B}\|_{L^2}
+C\sigma\|\nabla\dot{\mathbf{u}}\|_{L^2}\big(\|\mathbf{B}\|_{L^4}\|\mathbf{B}_t\|_{L^4}+
\|\mathbf{B}\|_{L^8}\|\nabla\mathbf{B}\|_{L^4}\|\mathbf{u}\|_{L^8}\big)
-\sigma\int(\mathbf{B}\cdot\mathbf{B}_t)\dot{\mathbf{u}}\cdot\mathbf{n}\mathrm{d}s\notag\\
&\leq \frac{\tilde{C}\sigma}{16}\|\nabla \dot{\mathbf{u}}\|_{L^2}^2
+\frac{\nu\sigma}{8}\|\curl\mathbf{B}_t\|_{L^2}^2
+C\sigma (1+C_0)\big(1+\|\nabla\mathbf{B}\|_{L^2}^4+\|\nabla\mathbf{u}\|_{L^2}^2\big)
\big(\|\nabla\curl\mathbf{B}\|_{L^2}^2+\|\mathbf{B}_t\|_{L^2}^2\big)\notag\\
&\quad +C\sigma \|\nabla\mathbf{u}\|_{L^2}^2\big(1+\|\nabla\mathbf{u}\|_{L^2}^2\big)+\mathcal{B}_6.
\end{align}

It remains to estimate the boundary integrals $\mathcal{B}_i$. According to \eqref{2.12} and the trace theorem, one has 
\begin{align}\label{3.31}
 &\mathcal{B}_1+\mathcal{B}_3+\mathcal{B}_6\notag \\
& =\sigma\int_{\partial\Omega}
\big[-P_{t}-\divv(P\mathbf{u})+(2\mu+\lambda)\divv\mathbf{u}_t
-\mathbf{B}\cdot\mathbf{B}_t\big]
\dot{\mathbf{u}}\cdot\mathbf{n}\mathrm{d}s\notag\\
&=\sigma\int_{\partial\Omega}(F-\bar{P})_t\dot{\mathbf{u}}\cdot\mathbf{n}\mathrm{d}s
-\sigma\int_{\partial\Omega}\divv(P\mathbf{u})\dot{\mathbf{u}}\cdot\mathbf{n}\mathrm{d}s\notag\\
&=-\sigma\int_{\partial\Omega}(F-\bar{P})_t(\mathbf{u}\cdot\nabla\mathbf{n}\cdot \mathbf{u})\mathrm{d}s
-\sigma\int_{\partial\Omega}\divv(P\mathbf{u})\dot{\mathbf{u}}\cdot\mathbf{n}\mathrm{d}s\notag\\
&=-\frac{\mathrm{d}}{\mathrm{d}t}\int_{\partial\Omega}\sigma (F-\bar{P})(\mathbf{u}\cdot\nabla\mathbf{n}\cdot \mathbf{u})\mathrm{d}s+\sigma'\int_{\partial\Omega}(F-\bar{P})(\mathbf{u}\cdot\nabla \mathbf{n}\cdot \mathbf{u})\mathrm{d}s+\sigma\int_{\partial\Omega}(F-\bar{P})(\dot{\mathbf{u}}\cdot\nabla \mathbf{n}\cdot \mathbf{u})\mathrm{d}s\notag\\
&\quad+\sigma\int_{\partial\Omega}(F-\bar{P})(\mathbf{u}\cdot\nabla \mathbf{n}\cdot \dot{\mathbf{u}})\mathrm{d}s-\sigma\int_{\partial\Omega}(F-\bar{P})(\mathbf{u}\cdot\nabla\mathbf{u}\cdot\nabla \mathbf{n}\cdot \mathbf{u})\mathrm{d}s-\sigma\int_{\partial\Omega}(F-\bar{P})(\mathbf{u}\cdot\nabla \mathbf{n}\cdot (\mathbf{u}\cdot\nabla)\mathbf{u})\mathrm{d}s\notag\\
&\quad-\sigma\int_{\partial\Omega}\divv(P\mathbf{u})\dot{\mathbf{u}}\cdot\mathbf{n}\mathrm{d}s\notag\\
&\leq
-\frac{\mathrm{d}}{\mathrm{d}t}\int_{\partial\Omega}\sigma (F-\bar{P})(\mathbf{u}\cdot\nabla\mathbf{n}\cdot \mathbf{u})\mathrm{d}s
-\sigma\int_{\partial\Omega}\divv(P\mathbf{u})\dot{\mathbf{u}}\cdot\mathbf{n}\mathrm{d}s
+\frac{\tilde{C}\sigma}{16}\|\nabla\dot{\mathbf{u}}\|_{L^2}^2
+\frac{C}{(2\mu+\lambda)^2}\|P-\bar{P}\|_{L^4}^4\notag\\
&\quad
+C\big(1+\|\nabla\mathbf{u}\|_{L^2}^2+C_0\|\nabla\mathbf{B}\|_{L^2}^2\big) \big(1+\sigma\|\sqrt{\rho}\dot{\mathbf{u}}\|_{L^2}^2+\|\nabla\mathbf{u}\|_{L^2}^2\big)\big(\|\sqrt{\rho}\dot{\mathbf{u}}\|_{L^2}^2
+\|\nabla\mathbf{u}\|_{L^2}^2\big)\notag\\
&\quad+
C\sigma(1+C_0)^2\|\nabla\mathbf{B}\|_{L^2}^2
\big(1+\|\nabla\mathbf{B}\|_{L^2}^2\big)\|\nabla\curl\mathbf{B}\|_{L^2}^2,
\end{align}
where we have employed \eqref{E4} and the following estimates
\begin{align*}
 &\left|\int_{\partial\Omega}(F-\bar{P})\big[(\mathbf{u}\cdot\nabla \mathbf{n}\cdot \mathbf{u})+
 (\dot{\mathbf{u}}\cdot\nabla \mathbf{n}\cdot \mathbf{u}+\mathbf{u}\cdot\nabla \mathbf{n}\cdot \dot{\mathbf{u}})\big]\mathrm{d}s\right|
 \leq C(1+\|F\|_{H^1})\big(\|\mathbf{u}\|_{H^1}^2+\|\mathbf{u}\|_{H^1}\|\dot{\mathbf{u}}\|_{H^1}\big),\notag\\
&\left|\int_{\partial\Omega}(F-\bar{P})(\mathbf{u}\cdot\nabla)\mathbf{u}\cdot\nabla \mathbf{n}\cdot \mathbf{u}\mathrm{d}s\right|
=\left|\int_{\partial\Omega}(F-\bar{P})(\mathbf{u}\cdot\mathbf{n}^\bot)(\mathbf{n}^\bot\cdot\nabla)\mathbf{u}\cdot\nabla \mathbf{n}\cdot \mathbf{u}\mathrm{d}s\right|\notag\\&
=\left|\int\nabla^\bot\cdot[(F-\bar{P})(\mathbf{u}\cdot\mathbf{n}^\bot)\nabla\mathbf{u}\cdot\nabla\mathbf{n}\cdot \mathbf{u}]\mathrm{d}\mathbf{x}\right|
=\left|\int\nabla u^i\cdot\nabla^\bot(\nabla_i\mathbf{n}\cdot\mathbf{u}(F-\bar{P})
(\mathbf{u}\cdot\mathbf{n}^\bot))\mathrm{d}\mathbf{x}\right|
\notag\\&\leq C(1+\|F\|_{H^1})\|\nabla\mathbf{u}\|_{L^4}\big(\|\mathbf{u}\|_{L^4}^2
+\|\nabla\mathbf{u}\|_{L^4}\|\mathbf{u}\|_{L^4}+\|\mathbf{u}\|_{L^8}^2\big),\notag\\
&\left|\int_{\partial\Omega}(F-\bar{P})\mathbf{u}\cdot\nabla \mathbf{n}\cdot (\mathbf{u}\cdot\nabla)\mathbf{u}\mathrm{d}s\right|\leq C(1+\|F\|_{H^1})\|\nabla\mathbf{u}\|_{L^4}\big(\|\mathbf{u}\|_{L^4}^2
+\|\nabla\mathbf{u}\|_{L^4}\|\mathbf{u}\|_{L^4}+\|\mathbf{u}\|_{L^8}^2\big).
\end{align*}
In addition, an application of the divergence theorem along with \eqref{E4} and Lemma \ref{ldot} shows that
\begin{align}\label{3.32}
 &\mathcal{B}_2+\mathcal{B}_4+\mathcal{B}_5-\sigma\int_{\partial\Omega}\divv(P\mathbf{u})\dot{\mathbf{u}}\cdot\mathbf{n}\mathrm{d}s\notag\\
&=\sigma\int_{\partial\Omega}[P-(2\mu+\lambda)\divv\mathbf{u}]\dot{\mathbf{u}}\cdot\nabla\mathbf{u}\cdot\mathbf{n}\mathrm{d}s
+\sigma\int_{\partial\Omega}\big[(2\mu+\lambda)\divv(\mathbf{u}\divv\mathbf{u}) -\divv(P\mathbf{u})\big]\dot{\mathbf{u}}\cdot \mathbf{n}\mathrm{d}s\notag\\
&=-\sigma\int_{\partial\Omega}\Big(F-\bar{P}+\frac12|\mathbf{B}|^2\Big)\dot{\mathbf{u}}\cdot\nabla\mathbf{u}\cdot\mathbf{n}\mathrm{d}s
  +\sigma\int_{\partial\Omega}\divv\Big[\mathbf{u}\Big(F-\bar{P}+\frac12|\mathbf{B}|^2\Big)
  \Big]\dot{\mathbf{u}}\cdot\mathbf{n}\mathrm{d}s\notag\\
&=\sigma\int\divv\big[\dot{\mathbf{u}}\divv\big(\mathbf{u}(F-\bar{P})\big)-(F-\bar{P})
\dot{\mathbf{u}}\cdot\nabla\mathbf{u}\big]
  \mathrm{d}\mathbf{x}+\frac{\sigma}{2}\int\divv\big[\dot{\mathbf{u}}
  \divv(\mathbf{u}|\mathbf{B}|^2)-|\mathbf{B}|^2\dot{\mathbf{u}}\cdot\nabla\mathbf{u}\big]
  \mathrm{d}\mathbf{x}\notag\\
 &\leq C\sigma(1+\|F\|_{H^1})\|\nabla\mathbf{u}\|_{L^4}
 \big(\|\nabla\dot{\mathbf{u}}\|_{L^2}+\|\dot{\mathbf{u}}\|_{L^4}\big)
  +C\sigma\|\nabla\dot{\mathbf{u}}\|_{L^2}
\big(\|\nabla F\|_{L^2}\|\mathbf{u}\|_{L^\infty}+\|\nabla \mathbf{B}\|_{L^4}\|\mathbf{B}\|_{L^8}\|\mathbf{u}\|_{L^8}\big)\notag\\
  &\quad+C\sigma\|\dot{\mathbf{u}}\|_{L^4}\big(\|\nabla\curl\mathbf{B}\|_{L^2}
  \|\mathbf{B}\|_{L^8}\|\mathbf{u}\|_{L^8}+\|\mathbf{u}\|_{L^4}\|\nabla \mathbf{B}\|_{L^4}^2+\|\nabla\mathbf{u}\|_{L^4}\|\nabla\mathbf{B}\|_{L^2}
  \|\mathbf{B}\|_{L^\infty}\big)\notag\\
  &\quad
  +C\sigma\|\nabla\dot{\mathbf{u}}\|_{L^2}\|\nabla\mathbf{u}\|_{L^4}\|\mathbf{B}\|_{L^8}^2
 +\sigma\int\Big(F-\bar{P}+\frac12|\mathbf{B}|^2\Big)
\big(\dot{\mathbf{u}}\cdot\nabla\divv\mathbf{u}-\dot{\mathbf{u}}\cdot\nabla\divv\mathbf{u}\big)
  \mathrm{d}\mathbf{x}+\sigma\int\dot{u}^iu^j\partial_i\partial_jF\mathrm{d}\mathbf{x}\notag\\
  &\leq
  C\sigma\big(1+C_0\|\nabla\mathbf{B}\|_{L^2}^2+\|\nabla\mathbf{u}\|_{L^2}^2
  +\|\sqrt{\rho}\dot{\mathbf{u}}\|_{L^2}^2\big)
\big(\|\sqrt{\rho}\dot{\mathbf{u}}\|_{L^2}^2+\|\nabla\mathbf{u}\|_{L^2}^2
+\|\nabla\mathbf{B}\|_{L^2}^2\big)
+\frac{C}{(2\mu+\lambda)^2}\|P-\bar{P}\|_{L^4}^4
\notag\\
&\quad
+\frac{\tilde{C}\sigma}{16}\|\nabla\dot{\mathbf{u}}\|_{L^2}^2
+C\sigma (1+C_0)^3\big(1+\|\nabla\mathbf{B}\|_{L^2}^4+\|\nabla\mathbf{u}\|_{L^2}^4\big)
\|\nabla\curl\mathbf{B}\|_{L^2}^2,
\end{align}
where we have used
 \begin{equation*}
\int\dot{u}^iu^j\partial_i\partial_jF\mathrm{d}\mathbf{x}
=-\int\big(\dot{u}^i_ju^j\partial_iF+\dot{u}^iu^j_j\partial_iF\big)\mathrm{d}\mathbf{x}
\leq C\|\nabla F\|_{L^2}\big(\|\nabla\dot{\mathbf{u}}\|_{L^2}\|\mathbf{u}\|_{L^\infty}
+\|\dot{\mathbf{u}}\|_{L^4}\|\nabla\mathbf{u}\|_{L^4}\big).
 \end{equation*}

Next, we need to consider $\sigma\|\mathbf{B}_t\|_{L^2}^2$ and $\sigma\|\nabla\curl\mathbf{B}\|_{L^2}^2$. To this end, differentiating $\eqref{a1}_3$ with respect to $t$ and multiplying the resultant by $\sigma\mathbf{B}_t$, one gets from integration by parts that
\begin{align}\label{3.33}
 &\,\frac12\frac{\mathrm{d}}{\mathrm{d}t}\big(\sigma\|\mathbf{B}_t\|_{L^{2}}^2\big)
 +\nu\sigma\|\curl\mathbf{B}_t\|_{L^{2}}^2
 -\frac12\sigma'\|\mathbf{B}_t\|_{L^{2}}^2\notag\\
 &=\sigma\int\big(\mathbf{B}_t\cdot\nabla\mathbf{u}-\mathbf{u}\cdot\nabla\mathbf{B}_t-\mathbf{B}_t\divv\mathbf{u}\big)\cdot\mathbf{B}_t\mathrm{d}\mathbf{x}
 +\sigma\int\big(\mathbf{B}\cdot\nabla\dot{\mathbf{u}}-\dot{\mathbf{u}}\cdot\nabla\mathbf{B}
 -\mathbf{B}\divv\dot{\mathbf{u}}\big)\cdot\mathbf{B}_t\mathrm{d}\mathbf{x}
 \notag\\&\quad
 -\sigma\int\big(\mathbf{B}\cdot\nabla(\mathbf{u}\cdot\nabla\mathbf{u})
 -(\mathbf{u}\cdot\nabla\mathbf{u})\cdot\nabla\mathbf{B}-\mathbf{B}\divv(\mathbf{u}\cdot\nabla\mathbf{u})\big)
 \cdot\mathbf{B}_t\mathrm{d}\mathbf{x}
 \triangleq \mathcal{L}_1+\mathcal{L}_2+\mathcal{L}_3.
\end{align}
By similar arguments, it can be deduced that
\begin{align}\label{3.34}
\mathcal{L}_1&\leq C\sigma\|\mathbf{B}_t\|_{L^4}^2\|\nabla\mathbf{u}\|_{L^2}\leq
\frac{\nu\sigma}{8}\|\curl\mathbf{B}_t\|_{L^2}^2
+C\sigma\|\nabla\mathbf{u}\|_{L^2}^2\|\mathbf{B}_t\|_{L^2}^2,\\
\mathcal{L}_2&\leq C\sigma
\|\mathbf{B}_t\|_{L^4}\big(\|\nabla\dot{\mathbf{u}}\|_{L^2}\|\mathbf{B}\|_{L^4}
+\|\dot{\mathbf{u}}\|_{L^4}\|\nabla\mathbf{B}\|_{L^2}\big)\notag\\
&\leq \frac{\tilde{C}\sigma}{16}\|\nabla\dot{\mathbf{u}}\|_{L^2}^2
+\frac{\nu\sigma}{8}\|\curl\mathbf{B}_t\|_{L^2}^2
+C\sigma\big(\|\nabla\mathbf{u}\|_{L^2}^4+\|\nabla\mathbf{B}\|_{L^2}^4\big)
\|\mathbf{B}_t\|_{L^2}^2+C\sigma\|\nabla\mathbf{u}\|_{L^2}^4,\label{3.35}\\
\mathcal{L}_3
&\leq C\sigma\big[\|\mathbf{B}\|_{L^4}\|\mathbf{B}_t\|_{L^4}\|\nabla\mathbf{u}\|_{L^4}^2
+\|\mathbf{u}\|_{L^\infty}\|\nabla\mathbf{u}\|_{L^4}
\big(\|\mathbf{B}_t\|_{L^4}\|\nabla\mathbf{B}\|_{L^2}
+\|\mathbf{B}\|_{L^4}\|\nabla\mathbf{B}_t\|_{L^2}\big)\big]\notag\\
&\quad-\sigma\int_{\partial\Omega}(\mathbf{B}\cdot\mathbf{B}_t)\mathbf{u}\cdot \nabla\mathbf{n}\cdot\mathbf{u}\mathrm{d}s\notag\\
&\leq C\sigma\|\mathbf{B}\|_{L^4}\|\mathbf{B}_t\|_{L^4}\|\nabla\mathbf{u}\|_{L^4}^2
+C\sigma\|\mathbf{u}\|_{L^\infty}\|\nabla\mathbf{u}\|_{L^4}
\big(\|\mathbf{B}_t\|_{L^4}\|\nabla\mathbf{B}\|_{L^2}
+\|\mathbf{B}\|_{L^4}\|\nabla\mathbf{B}_t\|_{L^2}\big)\notag\\
&\quad+C\sigma\|\nabla\mathbf{B}\|_{L^2}(\|\mathbf{B}_t\|_{L^2}
+\|\curl\mathbf{B}_t\|_{L^2})\|\nabla\mathbf{u}\|_{L^2}^2\notag\\
&\leq\frac{\nu\sigma}{8}\|\curl\mathbf{B}_t\|_{L^2}^2
+C\sigma\big(1+\|\nabla\mathbf{B}\|_{L^2}^2+\|\nabla\mathbf{u}\|_{L^2}^2\big)
\big(\|\sqrt{\rho}\dot{\mathbf{u}}\|_{L^2}^2+\|\nabla\mathbf{u}\|_{L^2}^2
+\|\nabla\mathbf{B}\|_{L^2}^2\big)
\big(1+\|\nabla\mathbf{B}\|_{L^2}^2\big)\notag\\
&\quad
+C\sigma (1+C_0)^2\big(1+\|\nabla\mathbf{B}\|_{L^2}^4\big)
\big(\|\nabla\curl\mathbf{B}\|_{L^2}^2+\|\mathbf{B}_t\|_{L^2}^2\big)
+\frac{C}{(2\mu+\lambda)^4}\|P-\bar{P}\|_{L^4}^4.\label{3.36}
\end{align}
Moreover, the $L^2$-estimate for $\eqref{a1}_3$ along with the boundary conditions \eqref{a3} implies that
\begin{align}\label{3.37}
  \|\nabla\curl\mathbf{B}\|_{L^2}^2&\leq C(\nu)\big(\|\mathbf{B}_t\|_{L^2}^2+\||\mathbf{u}||\nabla\mathbf{B}|\|_{L^2}^2+\||\mathbf{B}||\nabla\mathbf{u}|\|_{L^2}^2\big)\notag\\
&\leq C\big(\|\mathbf{B}_t\|_{L^2}^2+\|\mathbf{u}\|_{L^4}^2\|\nabla\mathbf{B}\|_{L^4}^2
+\|\mathbf{B}\|_{L^\infty}^2\|\nabla\mathbf{u}\|_{L^2}^2\big)\notag\\
&\leq \frac{1}{2}\|\nabla\curl\mathbf{B}\|_{L^2}^2
+C\|\mathbf{B}_t\|_{L^2}^2
+C\|\nabla\mathbf{u}\|_{L^2}^2\big(1+\|\nabla\mathbf{u}\|_{L^2}^2\big)
\big(1+\|\nabla\mathbf{B}\|_{L^2}^4\big).
\end{align}

Consequently, substituting \eqref{3.21}, \eqref{3.22}, and \eqref{3.29}--\eqref{3.32} into \eqref{3.20}, and adding \eqref{3.33}, one infers from Lemma \ref{l3.2} and
\eqref{3.34}--\eqref{3.37} that
\begin{align}\label{3.38}
&\frac{\mathrm{d}}{\mathrm{d}t}\bigg(\frac{\sigma}{2}\|\sqrt{\rho}\dot{\mathbf{u}}\|_{L^{2}}^{2}
+\frac{\sigma}{2}\|\mathbf{B}_t\|_{L^{2}}^2
+\int\sigma F
(\mathcal{P}\mathbf{u})_{j}^{i}(\mathcal{P}\mathbf{u})_{i}^{j}\mathrm{d}\mathbf{x}
+\int_{\partial\Omega}\sigma (F-\bar{P})(\mathbf{u}\cdot\nabla\mathbf{n}\cdot \mathbf{u})\mathrm{d}s\bigg)
\notag\\&\quad
+\frac{(2\mu+\lambda)\sigma}{4}\|\divf\mathbf{u}\|_{L^2}^2
+\frac{\mu\sigma}{4}\|\curl\dot{\mathbf{u}}\|_{L^2}^2
+\frac{\nu\sigma}{4}\|\curl\mathbf{B}_t\|_{L^{2}}^2\notag\\
&\leq(2+M)^{\exp\big\{3D_2(1+C_0)^3\big\}}\big(1+\sigma\|\sqrt{\rho}\dot{\mathbf{u}}\|_{L^{2}}^{2}
+\|\nabla\mathbf{u}\|_{L^{2}}^{2}+\|\curl\mathbf{B}\|_{L^{2}}^{2}\big)
\big(\|\sqrt{\rho}\dot{\mathbf{u}}\|_{L^{2}}^{2}
+\|\nabla\mathbf{u}\|_{L^{2}}^{2}+\|\curl\mathbf{B}\|_{L^{2}}^{2}\big)\notag\\
&\quad
+(2+M)^{\exp\big\{3D_2(1+C_0)^3\big\}}\bigg[ \big(1+\sigma\|\mathbf{B}_t\|_{L^2}^2\big)\|\mathbf{B}_t\|_{L^2}^2
+\frac{C}{(2\mu+\lambda)^2}\|P-\bar{P}\|_{L^4}^4\bigg],
\end{align}
where, due to Lemma \ref{ldot}, we have chosen the constant $\tilde{C}$ such that
\begin{equation*}
\tilde{C}\|\nabla\dot{\mathbf{u}}\|_{L^2}^2\leq
C_1\tilde{C}\big(\|\divv\dot{\mathbf{u}}\|_{L^2}^2
+\|\curl\dot{\mathbf{u}}\|_{L^2}^2+\|\nabla\mathbf{u}\|_{L^4}^4\big)
\leq \mu\big(\|\divf\mathbf{u}\|_{L^2}^2+\|\curl\dot{\mathbf{u}}\|_{L^2}^2\big)   +C\|\nabla\mathbf{u}\|_{L^4}^4.
\end{equation*}
Moreover, \eqref{E4} and \eqref{3.37} ensure that
\begin{align*}
&\left|\int\sigma F(\mathcal{P}\mathbf{u})_j^i(\mathcal{P}\mathbf{u})_i^j\mathrm{d}\mathbf{x}\right|
+\left|\int_{\partial\Omega}\sigma (F-\bar{P})(\mathbf{u}\cdot\nabla\mathbf{n}\cdot \mathbf{u})\mathrm{d}s\right| \\
&\leq C\sigma(1+\|F\|_{H^1})
\big(\|\nabla\mathcal{P}\mathbf{u}\|_{L^3}^2+\|\nabla\mathbf{u}\|_{L^2}^2\big)\notag\\
&\leq
 \frac{\sigma}{4}\|\sqrt{\rho}\dot{\mathbf{u}}\|_{L^{2}}^2+\frac{\sigma}{4}\|\mathbf{B}_t\|_{L^{2}}^2+
 C\|\nabla\mathbf{u}\|_{L^{2}}^2\big(1+\|\nabla\mathbf{u}\|_{L^{2}}^2\big)^3
 +C(1+C_0)\|\curl\mathbf{B}\|_{L^{2}}^2\big(1+\|\curl\mathbf{B}\|_{L^{2}}^2\big).
\end{align*}
Now we define an auxiliary functional $\mathcal{E}_2(t)$ as
\begin{align*}
\mathcal{E}_2(t)\triangleq&\frac{\sigma}{2}\|\sqrt{\rho}\dot{\mathbf{u}}\|_{L^{2}}^2
+\frac{\sigma}{2}\|\mathbf{B}_t\|_{L^{2}}^2
+(2+M)^{\exp\big\{3D_2(1+C_0)^3\big\}}\mathcal{E}_1(t)\notag\\
&+\int\sigma F
(\mathcal{P}\mathbf{u})_{j}^{i}(\mathcal{P}\mathbf{u})_{i}^{j}\mathrm{d}\mathbf{x}
+\int_{\partial\Omega}\sigma (F-\bar{P})(\mathbf{u}\cdot\nabla\mathbf{n}\cdot \mathbf{u})\mathrm{d}s.
\end{align*}
Then, by the definition of $\mathcal{E}_1(t)$ in \eqref{3.14} and \eqref{3.15}, one sees that
\begin{equation}\label{3.39}
\mathcal{E}_2(t)\thicksim \sigma\|\sqrt{\rho}\dot{\mathbf{u}}\|_{L^{2}}^2
+\sigma\|\mathbf{B}_t\|_{L^{2}}^2+\mathcal{E}_1(t).
\end{equation}

Setting
\begin{equation*}
\begin{cases}
f_2(t)\triangleq2+\mathcal{E}_2(t),\\
g_2(t)\triangleq(2+M)^{\exp\big\{\frac{13}{4}D_2(1+C_0)^3\big\}}
\bigg(\|\sqrt{\rho}\dot{\mathbf{u}}\|_{L^{2}}^2+\|\nabla\mathbf{u}\|_{L^{2}}^2
+\|\mathbf{B}_t\|_{L^{2}}^2+\|\curl\mathbf{B}\|_{L^{2}}^2
+\frac{\|P-\bar{P}\|_{L^4}^4}{(2\mu+\lambda)^2}\bigg),
\end{cases}
\end{equation*}
one gets from \eqref{3.39} and Lemma \ref{Hodge} that
\begin{align*}
f'_2(t)\leq g_2(t)f_2(t)
\end{align*}
provided that $\lambda$ satisfies \eqref{lam} with $D\geq 3D_2$.
Thus, it follows from Gronwall's inequality and Lemmas \ref{l3.1}--\ref{l3.3} that
\begin{equation}\label{3.40}
  \sup_{0\leq t\leq T}\big(\sigma\|\sqrt{\rho}\dot{\mathbf{u}}\|_{L^{2}}^2
  +\sigma\|\mathbf{B}_t\|_{L^{2}}^2\big)\leq
  \exp\bigg\{(2+M)^{\exp\big\{\frac{13}{4}D_2(1+C_0)^3\big\}}\bigg\},
\end{equation}
which combined with \eqref{3.37} implies that
\begin{equation}\label{3.41}
  \sup_{0\leq t\leq T}\big(\sigma\|\nabla\curl\mathbf{B}\|_{L^{2}}^2\big)\leq
  \exp\bigg\{(2+M)^{\exp\big\{\frac{7}{2}D_2(1+C_0)^3\big\}}\bigg\}.
\end{equation}
Integrating \eqref{3.38} with respect to $t$ over $(0,T)$, one obtains from \eqref{3.40} and Lemmas \ref{l3.1}--\ref{l3.3} that
\begin{align*}
&\int_0^T\big[(2\mu+\lambda)\sigma\|\divf\mathbf{u}\|_{L^2}^2+
\mu\sigma\|\curl\dot{\mathbf{u}}\|_{L^2}^2
+\nu\sigma\|\curl\mathbf{B}_t\|_{L^{2}}^2\big]\mathrm{d}t\notag\\
&\leq C(1+M)+C(1+C_0)(2+M)^{\exp\big\{\frac{7}{2}D_2(1+C_0)^3\big\}}
\exp\bigg\{(2+M)^{\exp\big\{\frac{7}{2}D_2(1+C_0)^3\big\}}\bigg\}
\notag\\
&\leq
 \exp\bigg\{(2+M)^{\exp\big\{\frac{15}{4}D_2(1+C_0)^3\big\}}\bigg\},
\end{align*}
which along with \eqref{3.40} and \eqref{3.41} leads to \eqref{3.19}.
\end{proof}

Finally, inspired by \cite{DE97}, we establish the upper bound of density.
\begin{lemma}\label{l3.5}
Under the assumption \eqref{3.1}, it holds that
\begin{align*}
0\leq\rho(\mathbf{x},t)\leq\frac{7}{4}\hat{\rho}~\textit{a.e.}~\mathrm{on}~\Omega\times[0,T]
\end{align*}
provided that $\lambda$ satisfies \eqref{lam} with $D\geq 5D_2$.
\end{lemma}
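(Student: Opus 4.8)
The plan is to follow the Lagrangian approach of \cite{DE97}, controlling $\rho$ along particle paths by balancing the pressure-induced damping against the remaining forcing. Let $X(t,\mathbf{x})$ solve $\frac{\mathrm{d}}{\mathrm{d}t}X=\mathbf{u}(X,t)$, $X(0,\mathbf{x})=\mathbf{x}$; since $\mathbf{u}\cdot\mathbf{n}=0$ on $\partial\Omega$, the flow maps $\overline{\Omega}$ into itself and is defined on $[0,T]$. Writing the mass equation \eqref{a1}$_1$ as $\dot\rho=-\rho\divv\mathbf{u}$ and inserting the definition of the effective viscous flux from \eqref{1.7}, namely $(2\mu+\lambda)\divv\mathbf{u}=F+(P-\bar{P})+\frac12|\mathbf{B}|^2$, I obtain along each trajectory the scalar ODE
\begin{equation*}
\frac{\mathrm{d}}{\mathrm{d}t}\rho(X(t),t)
=-\frac{\rho}{2\mu+\lambda}\Big[\big(P(\rho)-\bar{P}\big)+F+\tfrac12|\mathbf{B}|^2\Big].
\end{equation*}
Passing to $y(t)\triangleq\log\rho(X(t),t)$ turns this into $y'(t)=g(y(t))+b'(t)$ with $g(y)=-\frac{a}{2\mu+\lambda}e^{\gamma y}$ and $b'(t)=\frac{1}{2\mu+\lambda}\big[\bar{P}-F-\frac12|\mathbf{B}|^2\big]$ evaluated along $X$, a form suited to a Zlotnik-type comparison argument.

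Next, I would apply the comparison lemma: $g$ is continuous and strictly decreasing, so $g(\zeta)\le-N_1$ for $\zeta\ge M_1$, and the conclusion $y\le\max\{y(0),M_1\}+N_0$ follows once $b(t_2)-b(t_1)\le N_0+N_1(t_2-t_1)$ for all $0\le t_1\le t_2$. The delicate point is the threshold $M_1$: the forcing $b'$ contains the spatially averaged pressure $\bar{P}=\fint a\rho^\gamma\,\mathrm{d}\mathbf{x}$, whose time integral grows linearly and therefore must be absorbed into the damping budget $N_1(t_2-t_1)$. Here mass conservation $\bar\rho=\fint\rho_0\,\mathrm{d}\mathbf{x}\le\hat\rho$ is decisive: running a continuation argument on $T_1\triangleq\sup\{t\le T:\sup_{\Omega\times[0,t]}\rho\le\frac74\hat\rho\}$, one has on $[0,T_1]$ that $\bar{P}\le a(\tfrac74\hat\rho)^{\gamma-1}\bar\rho\le a(\tfrac74)^{\gamma-1}\hat\rho^\gamma$, so choosing $M_1=\log\big((\tfrac74)^{1-1/\gamma}\hat\rho\big)$ (strictly below $\log(\frac74\hat\rho)$) gives $\frac{\bar P}{2\mu+\lambda}\le N_1\triangleq\frac{a(\frac74)^{\gamma-1}\hat\rho^\gamma}{2\mu+\lambda}=-g(M_1)$, exactly matching the damping. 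This is what makes the threshold $\frac74\hat\rho$ attainable for every $\gamma>1$, since the crude a priori bound $\rho\le2\hat\rho$ alone would only recover $\approx2\hat\rho$ when $\gamma$ is large.

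It then remains to bound $N_0$, i.e. the genuinely integrable part of the forcing along trajectories, of the form $\frac{1}{2\mu+\lambda}\sup_{\mathbf{x}}\int_0^{T_1}\big(|F|+\tfrac12|\mathbf{B}|^2\big)(X(\tau),\tau)\,\mathrm{d}\tau$. I would estimate $\|F\|_{L^\infty}$ (and $\||\mathbf{B}|^2\|_{L^\infty}$) through the elliptic/Hodge bounds of Lemma \ref{E0} together with $\|\dot{\mathbf{u}}\|_{L^{p}}$ from Lemma \ref{ldot}, then control the time integrals by the $\int_0^T\|\sqrt\rho\dot{\mathbf{u}}\|_{L^2}^2$- and $\int_0^T\|\nabla\curl\mathbf{B}\|_{L^2}^2$-bounds of Lemma \ref{l3.2} and the weighted $\int_0^T\sigma\big(\|\curl\dot{\mathbf{u}}\|_{L^2}^2+\|\divf\mathbf{u}\|_{L^2}^2\big)$-bound of Lemma \ref{l3.4}. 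The $\sqrt{t_2-t_1}$ growth arising from a Cauchy--Schwarz in time is reabsorbed by the elementary inequality $\sqrt{s}\le\tfrac1{4\delta}+\delta s$, the linear part going into $N_1(t_2-t_1)$ and the constant part into $N_0$; since all resulting constants are at most of the form $(2+M)^{\exp\{\cdots\}}$ while \eqref{lam} forces $2\mu+\lambda$ to dominate them doubly-exponentially, $N_0$ is as small as desired. The comparison lemma then yields $\rho\le(\tfrac74)^{1-1/\gamma}\hat\rho\,e^{N_0}<\frac74\hat\rho$ on $[0,T_1]$, so by continuity $T_1=T$, which is the claim (the lower bound $\rho\ge0$ being immediate from $\rho=\rho_0\exp(-\int_0^t\divv\mathbf{u}\,\mathrm{d}\tau)\ge0$).

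The main obstacle I expect is the uniform (in both $T$ and $\lambda$) control of the trajectory integral of $|F|$. Two dimensional $L^\infty$-control of $F$ requires a $W^{1,p}$ ($p>2$) or logarithmic Sobolev estimate rather than $H^1$, and the weight $\sigma=\min\{1,t\}$ degenerates as $t\to0^+$, so the initial layer $[0,1]$ must be treated with the \emph{un}weighted bounds of Lemma \ref{l3.2} (and, if necessary, a Brezis--Wainger-type logarithmic interpolation to keep $\int_0^1\|F\|_{L^\infty}\,\mathrm{d}\tau$ finite), whereas on $[1,T]$ the weighted bounds of Lemma \ref{l3.4} become unweighted. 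Threading the Zlotnik budget between $N_0$ and $N_1(t_2-t_1)$ uniformly in $T$, while keeping every constant beneath the doubly-exponential barrier set by \eqref{lam}, is the technical heart of the argument.
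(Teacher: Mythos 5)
Your strategy is sound and shares the paper's skeleton --- Lagrangian trajectories, the identity $(2\mu+\lambda)\frac{\mathrm{d}}{\mathrm{d}t}\ln\rho=-\big[(P-\bar P)+G\big]$ with $G=F+\frac12|\mathbf{B}|^2$ from \eqref{1.7}, and the time-integrability of $\|G\|_{L^\infty}$ extracted from Lemmas \ref{E0}, \ref{ldot}, \ref{l3.1}--\ref{l3.4} --- but you close the trajectory ODE by a genuinely different device. The paper argues by contradiction at the first crossing: for a fixed starting point it takes the minimal $t_1$ with $\rho=\frac74\hat\rho$ and the maximal earlier $t_0$ with $\rho=\frac32\hat\rho$, so that $\ln\rho$ must rise by the fixed amount $\ln\frac76$ on $[t_0,t_1]$, which \eqref{3.42}--\eqref{3.43} forbid when $\lambda$ obeys \eqref{lam}; the case $t_1>1$ is handled by the $|\rho|^3$-weighted identity \eqref{3.44}, which trades the linearly growing $\int_1^T\|G\|_{L^\infty}\mathrm{d}t$ for the uniformly bounded $\int_1^T\|G\|_{L^2}\|\nabla G\|_{L^4}^2\mathrm{d}t$ by absorbing into the pressure damping. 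You instead run a Zlotnik comparison for $y=\log\rho$ with $g(y)=-\frac{a}{2\mu+\lambda}e^{\gamma y}$ together with a continuation argument on $T_1$. Your route buys a more transparent treatment of the average pressure: on $[0,T_1]$ the bound $\bar P\le a(\frac74\hat\rho)^{\gamma-1}\bar\rho\le a(\frac74)^{\gamma-1}\hat\rho^\gamma$ and the threshold $M_1=\log\big((\frac74)^{1-1/\gamma}\hat\rho\big)$ make the damping dominate $\bar P$ for every $\gamma>1$, whereas the paper's dismissal of $\int_{t_0}^{t_1}|\rho|^3(P(\rho(\tau))-\bar P)\,\mathrm{d}\tau$ as nonnegative is terse precisely on this point. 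The paper's route keeps all bookkeeping local in time and avoids stating an external comparison lemma.

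Two spots in your sketch need sharpening before the argument is complete. First, you set $N_1=-g(M_1)$ ``exactly matching the damping,'' but your Young splitting $\sqrt{s}\le\frac{1}{4\delta}+\delta s$ then deposits an additional linear-in-$(t_2-t_1)$ contribution into the same budget; you must enlarge $M_1$ by a margin (keeping $M_1+N_0<\log(\frac74\hat\rho)$, which the smallness of $\delta$ and of $N_0$ permits) rather than saturate $N_1$ at the outset. Second, the initial layer $[0,1]$ cannot be handled by the unweighted bounds of Lemma \ref{l3.2} alone: they only give $\nabla G\in L^2(0,1;L^2)$, and $H^1\not\hookrightarrow L^\infty$ in 2D. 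The paper's device in \eqref{3.43} is the interpolation $\|G\|_{L^\infty}\le C\|G\|_{L^2}^{1/3}\|\nabla G\|_{L^4}^{2/3}$ combined with the $\sigma$-weighted bounds of Lemma \ref{l3.4}: the exponent $\frac23$ converts the degenerate weight into the integrable factor $t^{-1/3}$, and H\"older in time with $\int_0^1 t^{-1/2}\mathrm{d}t<\infty$ closes the estimate. A Brezis--Wainger variant can be made to work, but it too must import the weighted $\|\nabla\dot{\mathbf{u}}\|_{L^2}$ information from Lemma \ref{l3.4}; with either repair your proof goes through.
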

\begin{proof}
Let $\mathbf{y}\in\Omega$ and define the corresponding particle path $\mathbf{x}(t)$ by
\begin{align*}
\begin{cases}
\mathbf{\dot{x}}(t,\mathbf{y})=\mathbf{u}(\mathbf{x}(t,\mathbf{y}),\mathbf{y)},\\
\mathbf{\dot{x}}(t_0,\mathbf{y})=\mathbf{y}.
\end{cases}
\end{align*}
Assume that there exists $t_1\leq T$ satisfying $\rho(\mathbf{x}(t_1), t_1) = \frac{7}{4}\hat{\rho}$, we take a minimal value of $t_1$ and then choose a maximal value of $t_0<t_1$ such that $\rho(\mathbf{x}(t_0), t_0)=\frac{3}{2}\hat{\rho}$. Thus, $\rho(\mathbf{x}(t),t)\in\big[\frac{3}{2}\hat{\rho},\frac{7}{4}\hat{\rho}\big]$ for $t\in[t_0,t_1]$. We divide the argument into two cases.

\textbf{Case 1:} $t_0<t_1\leq1$. According to \eqref{1.7} and $\eqref{a1}_1$, one has that
\begin{equation*}
(2\mu+\lambda)\frac{\mathrm{d}}{\mathrm{d}t}\ln\rho(\mathbf{x}(t),t)
+P(\rho(\mathbf{x}(t),t))-\bar{P}+\frac12|\mathbf{B}(\mathbf{x}(t),t)|^2=-F(\mathbf{x}(t),t),
\end{equation*}
where $\frac{\mathrm{d}\rho}{\mathrm{d}t}\triangleq \rho_t+\mathbf{u}\cdot\nabla\rho$. Integrating the above equality from $t_0$ to $t_1$ and abbreviating $\rho(\mathbf{x},t)$ by $\rho(t)$ for
convenience, we have that
\begin{equation}\label{3.42}
\ln\rho(\tau)\big|_{t_0}^{t_1}+\frac{1}{2\mu+\lambda}\int_{t_0}^{t_1}\big[P(\rho(\tau))
-\bar{P}\big]\mathrm{d}\tau=-\frac{1}{2\mu+\lambda}\int_{t_0}^{t_1}G(\mathbf{x}(\tau),\tau)\mathrm{d}\tau,
\end{equation}
where
\begin{equation*}
  G= F+\frac12|\mathbf{B}|^2.
\end{equation*}
It follows from Gagliardo--Nirenberg inequality,
Lemmas \ref{E0}, \ref{ldot}, \ref{l3.1}--\ref{l3.4}, and \eqref{2.10} that
\begin{align}\label{3.43}
&\int_0^{\sigma(T)}\|G(\cdot,t)\|_{L^\infty}\mathrm{d}t\leq C\int_0^{\sigma(T)}\|G\|_{L^2}^{\frac13}\|\nabla G\|_{L^4}^{\frac23}\mathrm{d}t\notag\\
&\leq C\int_0^{\sigma(T)}\Big((2\mu+\lambda)^{\frac13}
\|\divv\mathbf{u}\|_{L^2}^{\frac13}+\|P-\bar{P}\|_{L^2}^{\frac13}
\Big)\Big(\|\sqrt{\rho}\dot{\mathbf{u}}\|_{L^4}^{\frac23}+\|\nabla\mathbf{u}\|_{L^4}^{\frac23}
+\||\mathbf{B}||\nabla\mathbf{B}|\|_{L^4}^{\frac23}\Big)
\mathrm{d}t\notag\\
&\leq C\sup_{0\leq t\leq T}\Big((2\mu+\lambda)^{\frac13}\|\divv\mathbf{u}\|_{L^2}^{\frac13}
+\|P-\bar{P}\|_{L^2}^{\frac13}\Big)
\int_{0}^{\sigma(T)}\Big(
\|\nabla\dot{\mathbf{u}}\|_{L^{2}}^{\frac{2}{3}}
+\|\nabla\mathbf{u}\|_{L^2}^{\frac43}+\|\nabla\mathbf{u}\|_{L^4}^{\frac23}\notag\\
&\quad
+C_0^\frac{1}{12}\big(1+\|\nabla\mathbf{B}\|_{L^2}^2\big)
\|\nabla\curl\mathbf{B}\|_{L^2}^{\frac12}\Big)
\mathrm{d}t\notag\\
&\leq C\Big[(2\mu+\lambda)^{\frac{1}{6}}(2+M)^{\frac16\exp\big\{3D_2(1+C_0)^3\big\}}+1\Big]
\notag \\
& \quad \times\int_{0}^{\sigma(T)}
\Big(\|\nabla\dot{\mathbf{u}}\|_{L^{2}}^{\frac23}
+\|\sqrt{\rho}\dot{\mathbf{u}}\|_{L^{2}}^{\frac23}
+C_0^\frac{1}{12}\|\nabla\mathbf{B}\|_{L^2}^2\|\nabla\curl\mathbf{B}\|_{L^2}^{\frac12}\Big)
\mathrm{d}t\notag\\
&\leq(2\mu+\lambda)^{\frac{1}{6}}(2+M)^{\frac12\exp\big\{3D_2(1+C_0)^3\big\}}
\Bigg[\bigg(\int_{0}^{\sigma(T)}\|\sqrt{\rho}\dot{\mathbf{u}}\|_{L^{2}}^{2}\mathrm{d}t\bigg)^{\frac{1}{3}}\bigg(\int_{0}^{\sigma(T)}1\mathrm{d}t\bigg)^{\frac{2}{3}}
+\bigg(\int_0^{\sigma(T)}
t^{-\frac12}\mathrm{d}t\bigg)^{\frac23}
\notag\\
&\quad\times\bigg(\int_0^{\sigma(T)}\big(t\|\divf{\mathbf{u}}\|_{L^2}^2
+t\|\curl\dot{\mathbf{u}}\|_{L^2}^2\big)
\mathrm{d}t\bigg)^{\frac13}+\sup_{0\leq t\leq T}\|\nabla\mathbf{B}\|_{L^2}^2
\bigg(\int_{0}^{\sigma(T)}\|\nabla\curl\mathbf{B}\|_{L^{2}}^{2}\mathrm{d}t\bigg)^{\frac{1}{4}}\bigg(\int_{0}^{\sigma(T)}1\mathrm{d}t\bigg)^{\frac{3}{4}}
\Bigg]\notag\\
&\leq(2\mu+\lambda)^{\frac{1}{6}}\exp
\bigg\{(2+M)^{\exp\big\{\frac{17}{4}D_2(1+C_0)^3\big\}}\bigg\},
\end{align}
where we have used
\begin{equation*}
  \|\nabla\dot{\mathbf{u}}\|_{L^2}\leq C\big(\|\divf\mathbf{u}\|_{L^2}+\|\curl \dot{\mathbf{u}}\|_{L^2}+\|\nabla \mathbf{u}\|_{L^4}^2\big)
\end{equation*}
due to Lemma $\ref{ldot}$ and the definition of $\divf\mathbf{u}$.
Note that $\rho(t)$ takes values in $\big[\frac{3}{2}\hat{\rho},\frac{7}{4}\hat{\rho}\big]\subset [\hat{\rho},2\hat{\rho}]$ and $P(\rho)$ is increasing on $[0,\infty)$. Substituting $\eqref{3.43}$ into $\eqref{3.42}$ yields
\begin{equation*}
    \ln\left(\frac{7}{4}\hat{\rho}\right)-\ln\left(\frac{3}{2}\hat{\rho}\right)
    \leq\frac{1}{\left(2\mu+\lambda\right)^{\frac{5}{6}}}
   \exp
\bigg\{(2+M)^{\exp\big\{\frac{9}{2}D_2(1+C_0)^3\big\}}\bigg\},
\end{equation*}
which is impossible if $\lambda$ satisfies \eqref{lam} with $D\geq 5D_2$. Therefore, there is no time $t_1$ such that $\rho(\mathbf{x}(t_1), t_1) = \frac{7}{4}\hat{\rho}$. Since $\mathbf{y}\in\Omega$ is arbitrary, it follows that $\rho<\frac{7}{4}\hat{\rho}$ \textit{a.e.} on $\Omega\times[0,T]$.

\textbf{Case 2:} $t_1>1$. From $\eqref{a1}_1$ and \eqref{1.7} we have that
\begin{equation*}
    \frac{\mathrm{d}}{\mathrm{d}t}\rho(t)+\frac{1}{2\mu+\lambda}\rho(t)\Big(P(\rho(t))-\bar{P}+\frac12|\mathbf{B}(\mathbf{x}(t),t)|^2\Big)=-\frac{1}{2\mu+\lambda}\rho(t)F(\mathbf{x}(t),t).
\end{equation*}
After multiplying the above equality by $|\rho(t)|\rho(t)$, one has that
\begin{equation}\label{3.44}
    \frac{1}{3}\frac{\mathrm{d}}{\mathrm{d}t}\left|\rho(t)\right|^3+\frac{1}{2\mu+\lambda}|\rho(t)|^3(P(\rho(t))-\bar{P})
    =-\frac{1}{2\mu+\lambda}|\rho(t)|^3G(\mathbf{x}(t),t).
\end{equation}
If $\rho(t)$ takes values in $[\frac{3}{2}\hat{\rho},\frac{7}{4}\hat{\rho}]$,
by integrating \eqref{3.44} from $t_0$ to $t_1$, we obtain from similar arguments that
\begin{align*}
\hat{\rho}^{3}
&\leq\frac{C}{2\mu+\lambda}\int_{0}^{1}
\|G(\cdot,t)\|_{L^\infty}\mathrm{d}t
+\frac{C}{2\mu+\lambda}\int_{1}^{T}
\|G(\cdot,t)\|_{L^\infty}^3\mathrm{d}t\notag\\
&\leq(2\mu+\lambda)^{-\frac{5}{6}}\exp
\bigg\{(2+M)^{\exp\big\{\frac{17}{4}D_2(1+C_0)^3\big\}}\bigg\}
+\frac{C}{2\mu+\lambda}\int_{1}^{T}\|G\|_{L^{2}}\|\nabla G\|_{L^{4}}^{2}\mathrm{d}t\notag\\
&\leq(2\mu+\lambda)^{-\frac{5}{6}}\exp
\bigg\{(2+M)^{\exp\big\{\frac{17}{4}D_2(1+C_0)^3\big\}}\bigg\}
\notag\\&\quad
+(2\mu+\lambda)^{-\frac{1}{2}}\exp
\bigg\{(2+M)^{\exp\big\{4D_2(1+C_0)^3\big\}}\bigg\}
\int_{0}^{T}
\big(\|\sqrt{\rho}\dot{\mathbf{u}}\|_{L^{2}}^2+\|\nabla\dot{\mathbf{u}}\|_{L^{2}}^2
+C_0\|\nabla\mathbf{B}\|_{L^2}^{4}
\|\nabla\curl\mathbf{B}\|_{L^2}^{2}\big)\mathrm{d}t\notag\\
&\leq\frac{1}{(2\mu+\lambda)^{\frac12}}
\exp
\bigg\{(2+M)^{\exp\big\{\frac{19}{4}D_2(1+C_0)^3\big\}}\bigg\},
\end{align*}
which is impossible if $\lambda$ satisfies \eqref{lam} with $D\geq 5D_2$. Hence we conclude that there is no time $t_1$ such that $\rho(\mathbf{x}(t_1), t_1) = \frac{7}{4}\hat{\rho}$. Since $\mathbf{y}\in\Omega$ is arbitrary, it follows that $\rho<\frac{7}{4}\hat{\rho}$ \textit{a.e.} on $\Omega\times[0,T]$.
\end{proof}

Now we are ready to prove Proposition $\ref{p3.1}$.
\begin{proof}[Proof of Proposition \ref{p3.1}.]
Proposition \ref{p3.1} follows from Lemmas $\ref{l3.2}$--$\ref{l3.5}$ provided that $\lambda$ satisfies \eqref{lam} with $D\geq 5D_2$.
\end{proof}

\section{Proof of Theorem \ref{t1.1}}\label{sec4}
This section employs the \textit{a priori} estimates derived in Section $\ref{sec3}$ to finalize the proof of Theorem \ref{t1.1}.
\begin{proof}[Proof of Theorem \ref{t1.1}.]
Let $(\rho_0, \mathbf{u}_0, \mathbf{B}_0)$ be the initial data as described in Theorem \ref{t1.1}.
For $\epsilon>0$, let $j_\epsilon=j_\epsilon(\mathbf{x})$ be the standard mollifier,
and define the approximate initial data $(\rho_0^\epsilon, \mathbf{u}_0^\epsilon, \mathbf{B}_0^\epsilon)$:
\begin{align*}
\rho_0^\epsilon&=[J_\epsilon\ast(\rho_0\mathbf{1}_\Omega)]\mathbf{1}_\Omega+\epsilon,
\end{align*}
and $(\mathbf{u}_0^\epsilon,\mathbf{B}_0^\epsilon)$ is the unique smooth solution to the following elliptic equations
\begin{equation*}
\begin{cases}
\Delta \mathbf{u}_0^\epsilon = \Delta(J_\epsilon\ast\mathbf{u}_0), & \mathbf{x}\in \Omega, \\
\mathbf{u}_0^\epsilon \cdot \mathbf{n} = 0, \,\ \curl \mathbf{u}_0^\epsilon = 0, & \mathbf{x}\in\partial \Omega,
\end{cases}~~~~~~~~~~~
\begin{cases}
\Delta \mathbf{B}_0^\epsilon = \Delta(J_\epsilon\ast\mathbf{B}_0), & \mathbf{x}\in \Omega, \\
\mathbf{B}_0^\epsilon \cdot \mathbf{n} = 0, \,\ \curl \mathbf{B}_0^\epsilon = 0, & \mathbf{x}\in\partial \Omega.
\end{cases}
\end{equation*}
Then we have
\begin{align*}
\rho_0^\epsilon\in H^2,\ \ (\mathbf{u}_0^\epsilon,\mathbf{B}_0^\epsilon)\in H_\omega^2,\ \
\text{and} \ \ \inf_{\mathbf{x}\in\Omega}\{\rho_0^\epsilon(\mathbf{x})\}\geq\epsilon.
\end{align*}
Proposition $\ref{p3.1}$ implies that, for $\epsilon$ being suitably small,
\begin{align*}
0\leq\rho^\epsilon(\mathbf{x},t)\leq\frac{7}{4}\hat{\rho}~\textit{a.e.}~\mathrm{on}~\Omega\times[0,T]
\end{align*}
provided that $\lambda$ satisfies \eqref{lam}. Thus, Lemma $\ref{l2.1}$ yields the global existence and uniqueness of strong solutions $(\rho^\epsilon,\mathbf{u}^\epsilon,\mathbf{B}^\epsilon)$ to the problem \eqref{a1} and \eqref{a3} with the initial data $(\rho_0^\epsilon,\mathbf{u}_0^\epsilon,\mathbf{B}_0^\epsilon)$.

Fix $\mathbf{x}\in\overline{\Omega}$ and let $B_R$ be a ball of radius $R$ centered at $\mathbf{x}$.
Let $(F^\epsilon,\omega^\epsilon)$ be the functions $(F,\omega)$ with $(\rho,{\bf u},{\bf B})$ replaced by $(\rho^\epsilon,{\bf u}^\epsilon,{\bf B}^\epsilon)$.
Then, for $t\geq\tau>0$, one gets from Lemmas $\ref{E0}$, $\ref{l3.1}\text{--}\ref{l3.4}$, and Sobolev's inequality that
\begin{align}\label{4.1}
\langle\mathbf{u}^\epsilon(\cdot,t)
\rangle^{\frac12}_{\overline{\Omega}}&\leq C(1+\|\nabla\mathbf{u}^\epsilon\|_{L^4})\notag\\
&\leq C\Big(\|\sqrt{\rho^\epsilon}\dot{\mathbf{u}}^\epsilon\|_{L^2}^{\frac{1}{2}}
+\||\mathbf{B}^\epsilon||\nabla\mathbf{B}^\epsilon|\|_{L^2}^{\frac{1}{2}}\Big)
\|\nabla\mathbf{u}^\epsilon\|_{L^2}^\frac{1}{2}+C\|\nabla\mathbf{u}^\epsilon\|_{L^{2}}
+\frac{C}{2\mu+\lambda}\|P(\rho^\epsilon)-\overline{P(\rho^\epsilon)}\|_{L^4}
\notag\\
&\quad
+\frac{C}{2\mu+\lambda}\Big[\|\mathbf{B}^\epsilon\|_{L^{8}}^2
+\|P(\rho^\epsilon)-\overline{P(\rho^\epsilon)}\|_{L^2}^{\frac{1}{2}}
\Big(
\|\sqrt{\rho^\epsilon}\dot{\mathbf{u}}^\epsilon\|_{L^2}^{\frac{1}{2}}
+\||\mathbf{B}^\epsilon||\nabla\mathbf{B}^\epsilon|\|_{L^2}^{\frac{1}{2}}
+\|\nabla\mathbf{u}^\epsilon\|_{L^{2}}^\frac12\Big)
\Big]+C
\notag\\
&\leq C(\tau).
\end{align}
Note that
\begin{align*}
\left|\mathbf{u}^\epsilon(\mathbf{x},t)-\frac{1}{|B_R\cap\Omega|}
\int_{B_R\cap\Omega}\mathbf{u}^\epsilon(\mathbf{y},t)\mathrm{d}\mathbf{y}\right|
&=\left|\frac{1}{|B_R\cap\Omega|}
\int_{B_R\cap\Omega}\left(\mathbf{u}^\epsilon(\mathbf{x},t)
-\mathbf{u}^\epsilon(\mathbf{y},t)\right)\mathrm{d}\mathbf{y}\right|\notag\\
&\leq\frac{1}{|B_R\cap\Omega|}C(\tau)\int_{B_R\cap\Omega}
|\mathbf{x}-\mathbf{y}|^{\frac12}\mathrm{d}\mathbf{y}\leq C(\tau)R^{\frac12}.\notag
\end{align*}
Then, for $0<\tau\leq t_1<t_2<\infty$, it follows that
\begin{align*}
|\mathbf{u}^\epsilon(\mathbf{x},t_2)-\mathbf{u}^\epsilon(\mathbf{x},t_1)|
&\leq\frac{1}{|B_R\cap\Omega|}\int_{t_{1}}^{t_{2}}
\int_{B_R\cap\Omega}|\mathbf{u}_{t}^{\epsilon}(\mathbf{y},t)
|\mathrm{d}\mathbf{y}\mathrm{d}t+C(\tau)R^{\frac12}\notag\\
&\leq CR^{-1}|t_{2}-t_{1}|^{\frac{1}{2}}\left(\int_{t_{1}}^{t_{2}}
\int\left|\mathbf{u}_{t}^{\epsilon}(\mathbf{y},t)\right|^{2}
\mathrm{d}\mathbf{y}\mathrm{d}t\right)^{\frac{1}{2}}+C(\tau)R^{\frac12}\notag\\
&\leq CR^{-1}|t_{2}-t_{1}|^{\frac{1}{2}}\left(\int_{t_{1}}^{t_{2}}
\int\left(|\dot{\mathbf{u}}^{\epsilon}|^{2}
+|\mathbf{u}^{\epsilon}|^{2}|\nabla\mathbf{u}^{\epsilon}|^{2}\right)
\mathrm{d}\mathbf{y}\mathrm{d}t\right)^{\frac{1}{2}}
+C(\tau)R^{\frac12}\notag\\
&\leq C(\tau)R^{-1}|t_{2}-t_{1}|^{\frac{1}{2}}+C(\tau)R^{\frac12},\notag
\end{align*}
owing to
\begin{align}
\int_{t_1}^{t_2}\int|\mathbf{u}^\epsilon|^2
|\nabla\mathbf{u}^\epsilon|^2\mathrm{d}\mathbf{x}\mathrm{d}t
&\leq C\sup_{t_1\leq t\leq t_2}\|\mathbf{u}^\epsilon\|_{L^\infty}^2\int_{t_1}^{t_2}
\int|\nabla\mathbf{u}^\epsilon|^2\mathrm{d}\mathbf{x}\mathrm{d}t\notag\\
&\leq C\sup_{t_1\leq t\leq t_2}\|\mathbf{u}^\epsilon\|_{L^2}^{\frac{2}{3}}
\|\nabla\mathbf{u}^\epsilon\|_{L^4}^{\frac{4}{3}}
\int_{t_1}^{t_2}\int|\nabla\mathbf{u}^\epsilon|^2\mathrm{d}\mathbf{x}\mathrm{d}t\leq C(\tau).\notag
\end{align}
Choosing $R=|t_2-t_1|^{\frac13}$, one sees that
\begin{equation}\label{4.2}
|\mathbf{u}^\epsilon(\mathbf{x},t_2)-\mathbf{u}^\epsilon(\mathbf{x},t_1)|
\leq C(\tau)|t_{2}-t_{1}|^{\frac{1}{6}},~~\text{for}~~0<\tau\leq t_1<t_2<\infty.
\end{equation}
The same estimates in \eqref{4.1} and \eqref{4.2} also hold for the magnetic filed $\mathbf{B}^\epsilon$, which implies that $\{\mathbf{u}^\epsilon\}$ and $\{\mathbf{B}^\epsilon\}$ are uniformly H\"older continuous away from $t=0$.

Hence, for any fixed $\tau$ and $T$ with $0<\tau<T<\infty$, it follows from Ascoli--Arzel\`{a} theorem that there is a subsequence $\{\epsilon_{k}\}$ with $\epsilon_k\rightarrow0$ satisfying
\begin{equation}\label{4.3}
\mathbf{u}^{\epsilon_{k}}\rightarrow \mathbf{u}~~\mathrm{and}~~
\mathbf{B}^{\epsilon_{k}}\rightarrow \mathbf{B}
~~\mathrm{uniformly}~\mathrm{on}~\mathrm{compact}~\mathrm{sets} ~\mathrm{in}~\Omega\times(0,\infty).
\end{equation}
Moreover, by the standard compactness arguments as in \cite{EF01,PL98}, we can extract a further subsequence $\epsilon_{k'}\rightarrow0$ such that
\begin{equation}\label{4.4}
  \rho^{\epsilon_{k'}}\rightarrow\rho~~\mathrm{strongly}~\mathrm{in}~L^p(\Omega),~~\mathrm{for}~
  \mathrm{any}~p\in[1,\infty)~\mathrm{and}~t\geq0.
\end{equation}
Therefore, taking the limit of $\epsilon_{k'}\rightarrow0$ in \eqref{4.3} and \eqref{4.4}, we conclude that the limit triplet $(\rho,\mathbf{u},\mathbf{B})$ is a weak solution to the problem \eqref{a1}--\eqref{a3} in the sense of Definition $\ref{d1.1}$ and satisfies \eqref{reg}.
\end{proof}

\section{Proof of Theorem \ref{t1.2}}\label{sec5}
This section is devoted to the incompressible limit of \eqref{a1}--\eqref{a3} as the bulk viscosity tends to infinity.

\begin{proof}[Proof of Theorem \ref{t1.2}.]
Let $\{(\rho^\lambda,{\bf u}^\lambda,{\bf B}^\lambda)\}$ be the family of solutions to the problem \eqref{a1}--\eqref{a3} obtained in  Theorem \ref{t1.1}. Applying \eqref{reg} and performing a similar argument as that in \eqref{4.3}, then there is a subsequence $\{(\rho^{\lambda_{k}},{\bf u}^{\lambda_{k}},{\bf B}^{\lambda_{k}})\}$ such that
\begin{align}
{\bf u}^{\lambda_{k}}\rightarrow &{\bf v},~~{\bf B}^{\lambda_{k}}\rightarrow {\bf b}
~~\text{uniformly on compact sets  in}~\Omega\times(0,\infty),\notag\\
\rho^{\lambda_{k}}&\rightarrow \varrho\ \    \text{weakly in}\ \ L^p(\Omega),\ \ \text{for any}\ p\in[1,\infty)\ \text{and}\ {t\ge 0},\label{5.1}\\
\rho^{\lambda_{k}}&\rightarrow \varrho\ \  \text{weakly* in}\ \ L^\infty(\Omega),\ \ \text{for any}\ {t\ge 0},\notag\\
\divv{\bf u}^{\lambda_{k}}&\rightarrow 0~~\text{strongly  in}~L^2(\Omega\times(0,\infty)).\notag
\end{align}
Hence, we conclude that $\divv{\bf v}=0$ and $(\varrho,{\bf v},{\bf b})$ satisfies \eqref{1.13}--\eqref{1.15} for all $C^1$ test functions $(\phi,\boldsymbol\psi)$ just as in Definition \ref{d1.2}, with $\divv\boldsymbol\psi=0$ on $\Omega\times[0,\infty)$. Moreover, $(\varrho,{\bf v},{\bf b})$ satisfies
\begin{equation}\label{5.2}
 0\leq\varrho({\bf x},t)\leq 2 \hat\rho\ \ \text{a.e. on} \
\Omega\times[0,\infty),
\end{equation}
\begin{equation}\label{5.3}
\begin{aligned}[b]
&\sup\limits_{t\ge 0}\big(\|\sqrt{\varrho}{\bf v}\|_{L^2}^2+\|{\bf v}\|_{H^1}^2+\sigma\|\nabla^2{\bf v}\|_{L^2}^2+\|{\bf b}\|_{H^1}^2+\sigma\|\nabla\curl{\bf b}\|_{L^2}^2\big)\\
&\quad
+\int_0^\infty\big(\mu\|{\bf v}\|_{H^1}^2+\|\nabla^2{\bf v}\|_{L^2}^2
+\nu\|{\bf b}\|_{H^1}^2+\|\nabla\curl{\bf b}\|_{L^2}^2
\big)\mathrm{d}\tau\le C(C_0,M).
\end{aligned}
\end{equation}

It remains to show \eqref{1.11} and \eqref{1.12}. It follows from the mass equation $\eqref{a1}_1$ that
\begin{equation*}
\partial_t(\rho^{\epsilon,\lambda}-\rho_0^\epsilon)^2+{\bf u}^{\epsilon,\lambda}\cdot\nabla(\rho^{\epsilon,\lambda}-\rho_0^\epsilon)^2
+2\rho^{\epsilon,\lambda}(\rho^{\epsilon,\lambda}-\rho_0^\epsilon)\divv{\bf u}^{\epsilon,\lambda}=0.
\end{equation*}
Integrating the above equality over $\Omega\times(0,t)$, one has that
\begin{align*}
\big\|(\rho^{\epsilon,\lambda}-\rho_0^\epsilon)(\cdot,t)\big\|_{L^2}^2
& =\int_0^t\int(\rho^{\epsilon,\lambda}-\rho_0^\epsilon)^2\divv{\bf u}^{\epsilon,\lambda}\mathrm{d}{\bf x}\mathrm{d}\tau-2\int_0^t\int\rho^{\epsilon,\lambda}(\rho^{\epsilon,\lambda}
-\rho_0^\epsilon)\divv{\bf u}^{\epsilon,\lambda}\mathrm{d}{\bf x}\mathrm{d}\tau
\\ & \le C\bigg(\int_0^t\big\|\rho^{\epsilon,\lambda}-\rho_0^\epsilon\big\|_{L^4}^4
\mathrm{d}\tau\bigg)^\frac{1}{2}\bigg(\int_0^t\big\|\divv{\bf u}^{\epsilon,\lambda}\big\|_{L^2}^2\mathrm{d}\tau\bigg)^\frac{1}{2}
\\ & \quad +C\sup\limits_{t\ge 0}\big\|\rho^{\epsilon,\lambda}(\cdot,t)\big\|_{L^\infty} \bigg(\int_0^t\big\|\rho^{\epsilon,\lambda}-\rho_0^\epsilon\big\|_{L^2}^2
\mathrm{d}\tau\bigg)^\frac{1}{2}\bigg(\int_0^t\big\|\divv{\bf u}^{\epsilon,\lambda}\big\|_{L^2}^2\mathrm{d}\tau\bigg)^\frac{1}{2} \\
& \le C(t)\lambda^{-\frac{1}{2}},
\end{align*}
which along with \eqref{4.4} yields that
\begin{equation*}
\|(\rho^\lambda-\rho_0)(\cdot,t)\|_{L^2}^2
=\lim\limits_{\epsilon_k\rightarrow0}
\big\|(\rho^{\epsilon_k,\lambda}-\rho_0^{\epsilon_k})(\cdot,t)\big\|_{L^2}^2
\le C(t)\lambda^{-\frac{1}{2}}.
\end{equation*}
Thus
\begin{equation}\label{5.4}
\lim\limits_{\lambda\rightarrow\infty}\|(\rho^\lambda-\rho_0)(\cdot,t)\|_{L^2}
=0,\ \ \text{for any}\ t\ge 0.
\end{equation}

Next, using the mollifier $j_\epsilon$ as test functions in \eqref{1.13}, one infers from \eqref{5.3} that, for any compact set $K\subset\Omega$,
\begin{equation}\nonumber\partial_t[\varrho]_\epsilon+{\bf v}\cdot\nabla [\varrho]_\epsilon=\divv\left([\varrho]_\epsilon {\bf v}\right)-\divv[\rho{\bf v}]_\epsilon~~ \text{a.e. on}\ K\times(0,\infty),
\end{equation}
and furthermore,
\begin{equation*}
\partial_t\left([\varrho]_\epsilon-\rho_0\right)^2
+{\bf v}\cdot\nabla \left([\varrho]_\epsilon-\rho_0\right)^2=2\left([\varrho]_\epsilon-\rho_0\right)\big(\divv\left([\varrho-\rho_0]_\epsilon {\bf v}\right)-\divv\left[(\varrho-\rho_0){\bf v}\right]_\epsilon\big)~~ \text{a.e. on}~  K\times(0,\infty).
\end{equation*}
Integrating the above equality over $K\times(0,t)$, one gets that
\begin{align}\label{5.5}
&\big|\|([\varrho]_{\epsilon}-\rho_0)(\cdot,t)\|_{L^2(K)}^2- \|[\rho_0]_{\epsilon}-\rho_0\|_{L^2(K)}^2\big|\notag\\
 & \le C\sup\limits_{t\ge 0}\|[\varrho]_\epsilon-\rho_0\|_{L^\infty}\int_0^t
 \|\divv\left([\varrho-\rho_0]_\epsilon {\bf v}\right)-\divv\left[(\varrho-\rho_0){\bf v}\right]_\epsilon\|_{L^1(K)}\mathrm{d}\tau
\notag \\
& \le C\int_0^t\|\divv\left([\varrho-\rho_0]_\epsilon {\bf v}\right)-\divv\left[(\varrho-\rho_0){\bf v}\right]_\epsilon\|_{L^1(K)}\mathrm{d}\tau.
\end{align}
Lemma \ref{lcom} implies that
\begin{equation*}
\|\divv\left([\varrho-\rho_0]_\epsilon {\bf v}\right)-\divv\left[(\varrho-\rho_0){\bf v}\right]_\epsilon\|_{L^1(K)}\le C(K)\|\varrho-\rho_0\|_{L^2(\Omega)}\|{\bf v}\|_{W^{1,2}(\Omega)}\in L^1(0,T),\ \ \text{for any}\ T>0.
\end{equation*}
This together with Lebesgue's dominated convergence theorem and Lemma \ref{lcom} leads to
\begin{align}\label{5.6}
&\lim\limits_{\epsilon\rightarrow0}\int_0^t
\|\divv\left([\varrho-\rho_0]_\epsilon {\bf v}\right)-\divv\left[(\varrho-\rho_0){\bf v}\right]_\epsilon\|_{L^1(K)}\mathrm{d}\tau \notag \\
& =\int_0^t\lim\limits_{\epsilon\rightarrow0}\|\divv\left([\varrho-\rho_0]_\epsilon {\bf v}\right)-\divv\left[(\varrho-\rho_0){\bf v}\right]_\epsilon\|_{L^1(K)}\mathrm{d}\tau=0.
\end{align}
Substituting \eqref{5.6} into \eqref{5.5}, we conclude that
\begin{align}\label{5.7}
\|(\varrho-\rho_0)(\cdot,t)\|_{L^2(K)}^2= \lim\limits_{\epsilon\rightarrow0} \big|\| ([\varrho]_{\epsilon}-\rho_0)(\cdot,t)\|_{L^2(K)}^2- \|[\rho_0]_{\epsilon}-\rho_0\|_{L^2(K)}^2\big|=0,
\end{align}
which implies \eqref{1.12}.

Finally, combining \eqref{5.4} and \eqref{5.7}, it follows that
\begin{align*}
\lim\limits_{\lambda\rightarrow\infty}\|(\rho^\lambda-\varrho)(\cdot,t)\|_{L^2(K)}
=0,\ \ \text{for any compact set} \ K \subset \Omega \text{ and any} \ t\ge 0,
\end{align*}
which along with \eqref{5.1} yields \eqref{1.11}. Consequently, $(\varrho,{\bf v},{\bf b})$ is a global weak solution to the inhomogeneous incompressible magnetohydrodynamic equations \eqref{1.5} in the sense of Definition \ref{d1.2}.
\end{proof}

\section*{Conflict of interests}
The authors declare that they have no conflict of interests.

\section*{Data availability}
No data was used for the research described in the article.

\end{document}